\newcommand{\norm}[1]{\left\lVert#1\right\rVert}
\newcommand{\dd}{\, \text{d}}
\theoremstyle{remark}
\newtheorem{definition}{Definition}
\newtheorem{remark}{Remark}
\theoremstyle{plain}
\newtheorem{prototheorem}{Theorem}
\newtheorem{theorem}[prototheorem]{Theorem}
\newtheorem{lemma}[prototheorem]{Lemma}
\newtheorem{corollary}[prototheorem]{Corollary}
\newtheorem*{assumption*}{\assumptionnumber}
\providecommand{\assumptionnumber}{}
\newenvironment{assumption}[1]
 {%
  \renewcommand{\assumptionnumber}{Assumption A.$#1$}%
  \begin{assumption*}%
  \protected@edef\@currentlabel{A.#1}%
 }
 {%
  \end{assumption*}
 }
\newcommand{\eps}{\varepsilon}
\newcommand{\mnorm}[1]{\left\| #1 \right\|_{\mathsf{op}}} 
\newcommand{\tw}{\mathsf{tw}}
\newcommand{\twnorm}[1]{\left\| #1 \right\|_{\mathsf{tw}}} 
\newcommand{\W}{\boldsymbol{\mathcal{W}}}
\newcommand{\PM}{\ensuremath \mathcal{P}}
\newcommand{\J}{\mathrm{Couplings}}
\newcommand{\law}{\operatorname{Law}}
\newcommand{\TV}{\mathsf{TV}}
\newcommand{\tr}{\operatorname{tr}}
\newcommand{\tmix}{\tau_{\operatorname{mix}}}
\newcommand{\rn}[1]{\Romanbar{#1}}
\newcommand{\PP}{\Pi}
\newcommand{\ind}{\mathbf{1}}
\newcommand{\diam}{\operatorname{diam}}
\newcommand{\op}{\mathsf{op}}
\begin{document}

\begin{frontmatter}
\title{Mixing of Metropolis-Adjusted  \\ Markov Chains via Couplings:\\ The High Acceptance Regime}
\runtitle{Mixing of Metropolis-Adjusted Markov Chains}

\begin{abstract} 
We present a coupling framework to upper bound the total variation mixing time of various Metropolis-adjusted, gradient-based Markov kernels in the `high acceptance regime'.
The approach uses a localization argument to boost local mixing of the underlying unadjusted kernel to mixing of the adjusted kernel when the acceptance rate is suitably high. As an application, mixing time guarantees are developed for a non-reversible, adjusted Markov chain based on the kinetic Langevin diffusion, where little is currently understood. 
\end{abstract}

\begin{aug}
\Author[A]{\fnms{Nawaf}~\snm{Bou-Rabee}\ead[label=e1]{nawaf.bourabee@rutgers.edu}}
\and
\Author[B]{\fnms{Stefan}~\snm{Oberd\"orster}\ead[label=e3]{oberdoerster@uni-bonn.de}}
\address[A]{Department of Mathematical Sciences \\ Rutgers University Camden \\ 311 N 5th Street \\ Camden, NJ 08102 USA \\ \href{mailto:nawaf.bourabee@rutgers.edu}{nawaf.bourabee@rutgers.edu}}

\address[B]{Institute for Applied Mathematics \\ University of Bonn \\ Endenicher Allee 60 \\ D-53115 Bonn Germany \\
\href{mailto:oberdoerster@uni-bonn.de}{oberdoerster@uni-bonn.de}}
\runAuthor{N.~Bou-Rabee,
S.~Oberd\"orster}
\end{aug}

\begin{keyword}[class=MSC2010]
\kwd[Primary ]{60J05}
\kwd[; secondary ]{60J60,60J25}
\end{keyword}

%

\begin{keyword}
\kwd{Metropolis-Hastings}
\kwd{Couplings}
\kwd{Mixing Time}
\kwd{Non-reversible Markov Chain}
\kwd{High Acceptance Regime}
\end{keyword}
\end{frontmatter}

\maketitle

\section{Introduction}

A nearly universal ingredient to gradient-based  Markov chain Monte Carlo (MCMC) kernels are time discretizations of measure-preserving SDEs or PDMPs such as the kinetic Langevin diffusion and Andersen dynamics \cite{RoTw1996B,DiHoNe2000,MaStHi2002,ELi2008,BoSaActaN2018,EbMa2019,Deligiannidis2021,kleppe2022,BoEb2022}. These kernels are gradient-based in the sense that they incorporate and rely on evaluation of the gradient of the log-density of the target distribution. In practice, the asymptotic bias due to time discretization is either incurred (leading to \emph{unadjusted} kernels) or eliminated by a Metropolis-Hastings filter (leading to \emph{adjusted} kernels). In either case, a question that is both fundamental mathematically and crucial to applications is \cite{diaconis1995we,meyntweedie1993,Di2009,levin2009markov,villani2008optimal,MontenegroTetali,douc2018markov}: 
\emph{Starting from a distribution $\nu$, how many steps $n \in \mathbb{N}$ are sufficient for the $n$-step distribution of the Markov chain to be an $\varepsilon$-accurate approximation of the stationary distribution in total variation?}  The smallest such number of steps is the so-called $\varepsilon$-\emph{mixing time} of the Markov chain from the initial distribution $\nu$.

Recently, there has been significant progress in quantifying the mixing time of unadjusted, gradient-based kernels including the unadjusted Langevin algorithm \cite{durmus2017nonasymptotic,dalalyan2017theoretical,durmus2019high,erdogdu2021convergence},
unadjusted HMC \cite{BouRabeeSchuh2023,BouRabeeEberle2023,monmarche2022hmc}, and various unadjusted chains based on the kinetic Langevin diffusion \cite{cheng2018underdamped,dalalyan2020sampling, monmarche2021high,monmarche2022hmc}; see \cite{DurmusEberle2021} for a unified and comprehensive treatment of unadjusted MCMC methods.  These works give explicit upper bounds on the mixing time and complexity, which reveal that the time step size required to adequately resolve the asymptotic bias depends substantially on the accuracy  $\varepsilon$. This potentially costly dependence motivates Metropolis adjustment, which eliminates the asymptotic bias by employing a Metropolis-Hastings filter. Intuitively speaking, it ensures the proportion of steps the adjusted chain spends in a given region equals the measure of that region with respect to the stationary distribution \cite{MeRoRoTeTe1953,Ha1970, diaconis1995we,Tierney1998,BiDi2001,Di2009,andrieu2020general}.  As a consequence, though, the adjusted chain involves a complex interplay between the transition step of the unadjusted kernel and the stationary distribution; to quote  Bilera \& Diaconis [2001], \emph{``for many people ... the Metropolis-Hastings algorithm seems like a magic trick. It is hard to see where it comes from or why it works.''} Needless to say, the mixing time analysis of adjusted kernels is mathematically more delicate than of unadjusted kernels.

Intrinsically capturing the interplay described above, the notion of conductance has played a significant role in quantifying the mixing time of adjusted kernels.
Classical conductance arguments are commonly used to identify bottlenecks, which yield mixing time lower bounds \cite{LST21,LST20,chewi2020optimal}.
For adjusted kernels that in addition are reversible, conductance arguments can be adapted to obtain mixing time upper bounds; see, e.g., for MALA and HMC \cite{chewi2020optimal,chen22,chen23}.
While these works make mild assumptions on the stationary distribution (e.g.~isoperimetric inequalities) and often yield sharp mixing time upper bounds, a warm start assumption is inevitable.
In particular, these mixing time upper bounds typically depend logarithmically on the $L^{\infty}$-norm of the relative density of the initial to the stationary distribution; see \cite{chen2020fast,LST20} for progress towards double-logarithmic dependence
and \cite{alt23} for sampling from a warm start.
Apart from conductance arguments, current mathematical tools are limited in their ability to obtain quantitative mixing time upper bounds for non-reversible adjusted kernels, even from warm starting distributions. 

As a step towards filling the gap in capability outlined above, in this work we introduce a new coupling framework to obtain mixing time guarantees for Metropolis-adjusted, gradient-based Markov chains.  Let $\varepsilon>0$ be the desired total variation ($\TV$) accuracy. The underlying idea is to fix an epoch $\mathfrak E > 0$ of steps such that two copies of the unadjusted chain given by the kernel $\pi^u$ starting from different initial conditions $x$ and $\tilde{x}$ meet with probability at least $1-(3 e)^{-1}$ after $\mathfrak E$ steps, i.e., 
\begin{equation}
\label{intro:ucoupling}
\|\delta_x(\pi^u)^{\mathfrak E}-\delta_{\tilde x}(\pi^u)^{\mathfrak E}\|_{\TV}\ \leq\ (3e)^{-1} \;, \end{equation}
where we used the coupling characterization of the $\TV$ distance $\| \cdot \|_{\TV}$.
A standard way to ensure \eqref{intro:ucoupling} is to use a contractive coupling for $\mathfrak E-1$
steps, followed by a one-shot coupling  \cite{roberts2002one,madras2010quantitative,EbMa2019,monmarche2021high, BouRabeeEberle2023}. The time step size is then tuned such that the probability of  a rejection occurring in this epoch is at most $2(3 e)^{-1}$, and crucially, this tuning is at most logarithmic in $1/\varepsilon$.  
Hence, after one epoch, the adjusted kernel $\pi$ satisfies 
\begin{equation}\label{intro:acoupling}
\|\delta_x\pi^{\mathfrak E}-\delta_{\tilde x}\pi^{\mathfrak E}\|_{\TV}\ \leq\ e^{-1} \;.
\end{equation}
Therefore, after $\lceil \log(1/\eps) \rceil$ epochs, it immediately follows that there exists a coupling of the adjusted kernel which meets with probability at least $1-\eps$.  
Iterating the epochs is an important step in this new coupling approach, and without this iteration, as Monmarch{\'e} noted in \cite{monmarche2021high}, the aforementioned proof fails to capture a logarithmic scaling of the mixing time with respect to $1/\varepsilon$.

Stated precisely in Theorem \ref{thm:mix}, the main result of this paper provides a  broadly applicable coupling framework to obtain mixing time upper bounds for Metropolis-adjusted, gradient-based Markov chains without imposing restrictive assumptions on either the stationary or the starting distribution. In essence, the theorem uses a localization argument to boost local mixing of the unadjusted kernel to mixing of the adjusted kernel when the Metropolis filter intervenes over each epoch with sufficiently low probability, i.e., in the \emph{high acceptance regime}: a notion that is made precise in \S\ref{sec:HAR}.
The \emph{low acceptance regime}, allowing for more frequent rejection, falls beyond the scope of this work. As a nontrivial application of Theorem~\ref{thm:mix}, in \S\ref{sec:MAKLA} we develop mixing time upper bounds for a non-reversible, adjusted Markov chain based on the kinetic Langevin diffusion.

\subsection*{Complimentary Literature}

 Here we briefly highlight some complimentary literature on related but different probabilistic techniques for mixing time analysis. In recent years, there has been  progress in developing couplings for a variety of Metropolis-adjusted, gradient-based chains whose stationary distributions display high-dimensionality and/or non-logconcavity.  In particular, dimension-free upper bounds in Wasserstein distance have been developed for a variant of MALA suitable for perturbations of Gaussian measures in high dimensions \cite{Eb2014}.  Moreover, a coupling of adjusted HMC that is contractive  in non-logconcave settings was introduced in \cite{BoEbZi2020}; this coupling offers  flexibility for extensions/applications \cite{heng2019unbiased,BoEb2020,BouRabeeSchuh2023}. For MALA and related Markov chains, coupling-based contractivity results are also available in distances that interpolate between  $L^1$-Wasserstein and $\TV$ \cite{EbMa2019}.  Moreover, a variety of couplings tailored to Metropolis-Hastings kernels, including maximal couplings, have recently been proposed for MCMC convergence analysis in high dimensions \cite{heng2019unbiased,jacob2020unbiased,wang2021maximal,o2021metropolis}. In addition, there is a considerable and growing body of work devoted to Harris Ergodic Theorem, which is a very powerful tool for verifying geometric ergodicity of Markov chains  \cite{meyntweedie1993,MeTw1994,rosenthal1995minorization,RoRo2004,hairer2010convergence,DuMoSa2020}; for a simple and elegant proof see \cite{hairer2011yet}.  Over the years there have been many successful applications of this tool including \cite{MenTw1996,RoTw1996A,MaStHi2002,Ta2002,BoHa2013,hairer2014spectral,Bu2014,DuFoMo2016,livingstone2019,DuMoSa2020}, just to cite a few.  There have also been significant advances in refining Harris Ergodic Theorem to obtain more explicit quantitative bounds under more easily verifiable conditions \cite{hairer2011asymptotic,eberle2019quantitative,de2019convergence,durmus2022geometric,YaRo2023}.

\subsection*{Acknowledgements}

The authors are greatly indebted to Andreas Eberle for his guidance \& support. The work of N.~Bou-Rabee has been partially supported by the National Science Foundation under Grant No.~DMS-2111224.  The work of S.~Oberd\"orster has been funded by the Deutsche Forschungsgemeinschaft (DFG, German Research Foundation) under Germany’s Excellence Strategy – EXC-2047/1 – 390685813.

\section{Main Result}\label{sec:mix}

Let $(\Omega,\mathcal A,\mathbb P)$ be a probability space and let $S$ be a Polish state space with metric $d$ and Borel $\sigma$-algebra $\mathcal B$.
Denote by $\mathcal P(S)$ the set of probability distributions on $(S,\mathcal B)$.
Let $\mu\in\mathcal P(S)$.  A standard way to construct a gradient-based, ergodic Markov chain with stationary distribution $\mu$ is to first construct a $\mu$-preserving, ergodic Markov chain with transition kernel $\pi^{exact}$ from the exact flow of a $\mu$-preserving SDE or PDMP.
Both for theoretical purposes and for implementability in applications, it can be desirable to replace the exact flow in $\pi^{exact}$ by an approximate flow based on time-discretization, which yields an unadjusted Markov transition kernel $\pi^u$.
However, this unadjusted kernel has the significant drawback that $\mu \pi^u \ne \mu$. Resolving the resulting asymptotic bias in applications can be infeasible. Metropolis-adjustment provides a tool for correcting the stationary distribution and produces an adjusted transition kernel $\pi$ satisfying $\mu \pi = \mu$.
More precisely, we consider transition steps $X\sim\pi(x,\cdot)$ that for $\omega\in\Omega$ are of the general form
\begin{equation} \label{eq:X}
X(\omega)\ =\ \Phi(\omega,x)\,\ind_{A(x)}(\omega)\ +\ \Psi(\omega,x)\,\ind_{A(x)^c}(\omega)\;, \end{equation}
where $\Phi,\Psi:\Omega\times S\to S$ are product measurable and such that $\Phi(\cdot,x)\sim\pi^u(x,\cdot)$ and $\Psi(\cdot,x)\sim\pi^r(x,\cdot)$ for all $x\in S$, where $\pi^r$, like $\pi^u$, is a probability kernel on $(S,\mathcal B)$.
Hereafter, we omit $\omega$ from the notation, writing $\Phi(\cdot,x)=\Phi(x)$ and $\Psi(\cdot,x)=\Psi(x)$.
The indicator function of the event $A(x)=\{\mathcal U\leq\alpha(x,\Phi(x))\}\subseteq\Omega$ with an independent $\mathcal U\sim\mathrm{Unif}(0,1)$ indicates that the proposal $\Phi(x)$ is accepted.  Otherwise the proposal is rejected, in which case the chain is allowed to move according to $\Psi$.

\subsection{The High Acceptance Regime}

\label{sec:HAR}

We now introduce the \emph{high acceptance regime}, in which acceptance occurs sufficiently often such that the adjusted kernel inherits mixing properties of the exact kernel via the unadjusted kernel.   
The $\TV$-mixing time of $\pi^{exact}$ started in the distribution $\eta\in\mathcal P(S)$ to a specified accuracy $\delta>0$ is defined by
\begin{equation}\label{tmixexact}
    \tmix^{exact}(\delta,\eta)\ =\ \inf\bigr\{n\geq0\,:\,\|\eta(\pi^{exact})^n-\mu\|_{\TV}\leq\delta\bigr\}\;.
\end{equation}
The \emph{high acceptance regime} is characterized by the acceptance rate being suitably controlled over a time scale set by the mixing time of the exact kernel which, in turn, will yield a mixing time upper bound for the adjusted kernel by comparison.
\begin{definition} \label{def:HAR}
On a collection $\mathcal C\subseteq\mathcal P(S)$ such that $\{\eta\pi:\eta\in\mathcal C\}\subseteq\mathcal C$, $\pi$ is in the \emph{high acceptance regime}, if
\begin{equation}\label{HAR}
    \sup_{\eta\in\mathcal C}\tmix^{exact}\bigr((3e)^{-1},\eta\bigr)\:\cdot\:\sup_{\eta\in\mathcal C}\mathbb P_{x\sim\eta}\bigr(A(x)^c\bigr)\ \leq\ (3e)^{-1}\;.
\end{equation}
\end{definition}

\medskip

A key feature of Definition~\ref{def:HAR} is that the restrictiveness of the condition \eqref{HAR} strongly depends on the choice of $\mathcal C$: the larger the collection $\mathcal C$, the more restrictive \eqref{HAR} becomes.  In one extreme $\mathcal C=\{\mu\}$, the adjusted kernel is \emph{always} in the high acceptance regime since the left hand side of \eqref{HAR} trivially vanishes.  This work is concerned with the other extreme: cold start distributions corresponding to $\mathcal C$ including distributions which may not even be absolutely continuous with respect to $\mu$.  This feature of the definition is what motivates formulating the high acceptance regime in terms of $\pi^{exact}$.


\subsection{Mixing in the High Acceptance Regime}

Assumption \ref{A_S} stated below is geared towards the high acceptance regime defined in Definition~\ref{def:HAR} with  $\mathcal{C}$ including cold start distributions.   Under Assumption \ref{A_S}, Theorem~\ref{thm:mix} gives mixing time upper bounds for the adjusted kernel.  To better understand Assumption \ref{A_S}, a brief description is provided.

\smallskip

The possibility of cold start distributions motivates using pointwise acceptance probability bounds for the adjusted chain. However, since such bounds often degenerate at infinity, \ref{A_S} {\it(iv)} is introduced to localize the adjusted chain to a bounded domain $D \subseteq S$ with sufficiently high probability.  By association, the underlying unadjusted chain is similarly localized to $D$.

In this domain, and intuitively speaking, \ref{A_S} {\it(i)} and {\it(ii)} require that the underlying unadjusted kernel admits a \emph{locally} successful coupling.  More precisely, \ref{A_S} {\it(i)} assumes there exists a coupling for $\pi^u$ that is \emph{locally} contractive in $D$; and \ref{A_S} {\it(ii)} assumes there exists a \emph{local} one-shot coupling for $\pi^u$ in $D$.

Although stated in a slightly different way, the main idea underlying \ref{A_S} {\it(iii)} is \eqref{HAR}.  Indeed, the epoch $\mathfrak E$ of transition steps appearing in {\it(iii)} is defined in such a way that by {\it(i)} and {\it(ii)}, there exists a coupling of two copies of the unadjusted chain starting at two different initial conditions within $D$ that induces meeting with probability at least $1-(3 e)^{-1}$; therefore, this epoch $\mathfrak E$ is analogous to   $\sup_{\eta\in\mathcal C}\tmix^{exact}\bigr((3e)^{-1},\eta\bigr)$ in \eqref{HAR}.

Denote by $\Delta$ the diagonal in the product space $S\times S$.
The couplings appearing in Assumption~\ref{A_S} are all assumed to be \emph{faithful}.  Recall that a coupling $\Pi$ is faithful if  $\Pi((x,x),\Delta)=1$ for all $x\in S$.
Couplings of the adjusted kernel inherit this property from couplings of the unadjusted kernel if a synchronous coupling of the underlying uniform random variables in the Metropolis filter is used.

Similarly to \eqref{tmixexact}, define the $\TV$-mixing time of the adjusted kernel with initial distribution $\eta\in\mathcal P(S)$ and accuracy $\delta>0$ to be
\begin{equation}\label{eq:tmix}
    \tmix(\delta,\eta)\ =\ \inf\bigr\{n\geq0\,:\,\|\eta\pi^n-\mu\|_{\TV}\leq\delta\bigr\}\;.
\end{equation}

\smallskip

We are now prepared to state Assumption \ref{A_S} and then immediately afterwards the main result of the paper, followed by its proof.

\begin{assumption}{{\bf S}}\label{A_S}
Let $\eps>0$ be the accuracy, $\nu\in\mathcal P(S)$ be the initial distribution, and $D\subseteq S$ be a domain such that $\diam_d(D)\leq R$ for some $R>0$.

\medskip

\noindent
Regarding the unadjusted transition kernel, we require:

\smallskip

\begin{itemize}
\item[(i)]
There exists $\rho>0$ and for all $x,\tilde{x}\in D$ a coupling $\Pi^u_{Contr}((x,\tilde{x}),\cdot)$ of $\pi^u(x,\cdot)$ and $\pi^u(\tilde{x},\cdot)$ such that the contractivity
\begin{equation*}\label{eq:contrass}
    \mathbb{E}d(X^u,\widetilde{X}^u)\ \leq\ (1-\rho)d(x,\tilde{x})
\end{equation*}
holds for $(X^u,\widetilde{X}^u)\sim\Pi^u_{Contr}((x,\tilde{x}),\cdot)$.

\item[(ii)]
There exists $C_{Reg}>0$ and for all $x,\tilde{x}\in D$ a coupling $\Pi^u_{Reg}((x,\tilde{x}),\cdot)$ of $\pi^u(x,\cdot)$ and $\pi^u(\tilde{x},\cdot)$ satisfying the regularization
\begin{equation*}\label{eq:OSass}
    \Pi^u_{Reg}\bigr((x,\tilde{x}),\Delta^c\bigr)\ \leq\ C_{Reg}d(x,\tilde{x})\;.
\end{equation*}

\end{itemize}

\smallskip

\noindent
Regarding the adjusted transition kernel, we require: 

\smallskip

\begin{itemize}

\item[(iii)]
Set the length of an epoch of transition steps at
\[ \mathfrak E\ =\ \bigr\lceil\rho^{-1}\log(3eC_{Reg}R)\bigr\rceil+1 \]
and suppose
\begin{equation*}\label{eq:rejprobass}
    \mathfrak E\ \sup_{x\in D}\mathbb{P}(A(x)^c)\ \leq\ (3e)^{-1}\;.
\end{equation*}

\medskip

\item[(iv)]
To reduce to the local properties fixed hitherto, we require control of the exit probability from $D$ over the total number of transition steps
\[ \mathfrak H\ =\ \mathfrak E\:\lceil\log(2/\eps)\rceil \]
consisting of sufficiently many epochs to conclude mixing to $\eps$ accuracy.
Therefore let $T=\inf\{k\geq0\,:\,X_k\notin D\}$ and presume
\begin{equation*}\label{eq:exitprob}
    \mathbb P\bigr(T\:\leq\:\mathfrak H\bigr)\ \leq\ \eps/4
\end{equation*}
both for $X_0\sim\nu$ and $X_0\sim\mu$.
\end{itemize}
\end{assumption}

\medskip

\begin{theorem}\label{thm:mix}
    Suppose Assumption~\ref{A_S} holds for $\eps>0$ and $\nu\in\mathcal P(S)$.
    Then
    \[ \tmix(\eps,\nu)\ \leq\ \mathfrak H\;. \]
\end{theorem}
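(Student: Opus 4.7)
The plan is to construct a faithful coupling $(X_k,\tilde X_k)_{k=0}^{\mathfrak H}$ of two copies of the adjusted chain with $X_0\sim\nu$ and $\tilde X_0\sim\mu$ and show that $\mathbb{P}(X_{\mathfrak H}\neq\tilde X_{\mathfrak H})\leq\eps$, so that the coupling characterization of total variation gives $\|\nu\pi^{\mathfrak H}-\mu\|_{\TV}\leq\eps$. The $\mathfrak H=\mathfrak E N$ steps, with $N:=\lceil\log(2/\eps)\rceil$, are partitioned into $N$ epochs of length $\mathfrak E$. Within each epoch, while both chains lie in $D$, use synchronous coupling of the Metropolis uniforms and couple the proposals via $\Pi^u_{Contr}$ for the first $\mathfrak E-1$ transitions and via $\Pi^u_{Reg}$ for the final transition; outside $D$ or after the chains have met, any valid faithful coupling is used. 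Assumption~\ref{A_S}\,{\it(iv)} together with a union bound yields $\mathbb{P}(G^c)\leq\eps/2$, where $G:=\{X_k,\tilde X_k\in D\text{ for all }k\leq\mathfrak H\}$, and on $G$ the designed coupling is active throughout.

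The heart of the proof is a per-epoch estimate: on $G$, conditional on $X_{(i-1)\mathfrak E}\neq\tilde X_{(i-1)\mathfrak E}$, the chains meet by step $i\mathfrak E$ with probability at least $1-e^{-1}$. I would obtain this via a three-way decomposition of the non-meeting event in epoch $i$ into: $(a)$ at least one rejection in the $X$-chain, $(b)$ the analogue for $\tilde X$, and $(c)$ no rejections in either chain yet the proposals at step $i\mathfrak E$ differ. A union bound over the $\mathfrak E$ steps of the epoch combined with Assumption~\ref{A_S}\,{\it(iii)} yields $\mathbb{P}(a),\mathbb{P}(b)\leq(3e)^{-1}$. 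For $(c)$, introduce the truncated quantity $D_k:=\mathbb{E}\bigl[d(X_k,\tilde X_k)\,\mathbf{1}_{\text{no rejection in either chain up to step }k}\,\mathbf{1}_{G}\bigr]$ and show, by conditioning step by step and using $\min(\alpha,\tilde\alpha)\leq 1$ to absorb the joint-accept indicator, that Assumption~\ref{A_S}\,{\it(i)} gives $D_k\leq(1-\rho)D_{k-1}$. Iterating, $D_{i\mathfrak E-1}\leq(1-\rho)^{\mathfrak E-1}R$; one application of {\it(ii)} at the final step bounds the probability of $(c)$ restricted to $G$ by $C_{Reg}D_{i\mathfrak E-1}\leq C_{Reg}R(1-\rho)^{\mathfrak E-1}\leq(3e)^{-1}$, where the last estimate is the tuning built into the definition of $\mathfrak E$.

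Iterating the per-epoch bound over the $N$ epochs---faithfulness keeps the chains glued once they have met---gives $\mathbb{P}(X_{\mathfrak H}\neq\tilde X_{\mathfrak H},\,G)\leq e^{-N}\leq\eps/2$, and combining with $\mathbb{P}(G^c)\leq\eps/2$ yields $\mathbb{P}(X_{\mathfrak H}\neq\tilde X_{\mathfrak H})\leq\eps$, completing the proof.

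The main obstacle, I expect, is the interaction between the contractivity analysis and the accept/reject filter: conditioning directly on ``both chains accept'' biases the proposal law away from $\pi^u$ (accepted proposals are drawn from an $\alpha$-weighted version), so {\it(i)} does not apply to the conditional distribution. Working with the truncated expectation $D_k$ with the indicator $\mathbf{1}_{\text{no rejection so far}}$ inside the integrand---rather than conditioning---sidesteps this, since the unconditional contractivity bound upper-bounds the expectation restricted to the accept event via $\min(\alpha,\tilde\alpha)\leq 1$. A secondary subtlety is the localization to $D$: the couplings $\Pi^u_{Contr}$ and $\Pi^u_{Reg}$ are only guaranteed for pairs in $D$, and restricting to $G$ is what ensures they are applicable at every step of every epoch.
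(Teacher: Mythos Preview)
Your strategy is the paper's strategy: couple two adjusted chains started at $\nu$ and $\mu$, split off the event that one chain leaves $D$ via (iv), and inside $D$ run each epoch as $\mathfrak E-1$ contractive steps followed by one regularizing step, controlling the probability of any rejection in the epoch by (iii) and iterating over $\lceil\log(2/\eps)\rceil$ epochs using faithfulness. Your anticipation of the main obstacle---that one must work with the indicator of ``no rejection so far'' inside the expectation rather than condition on it---is exactly the point.

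There is one technical slip. Your truncated quantity $D_k$ carries $\mathbf 1_G$, but $G$ depends on the full trajectory up to step $\mathfrak H$ and hence is not $\mathcal F_{k-1}$-measurable. When you condition on $\mathcal F_{k-1}$ to apply (i), $\mathbf 1_G$ cannot be pulled out, so the recursion $D_k\le(1-\rho)D_{k-1}$ does not close as written. The fix (and what the paper does) is to replace $\mathbf 1_G$ by the adapted indicator $\mathbf 1_{\{X_l,\tilde X_l\in D,\ 0\le l\le k-1\}}$: this is $\mathcal F_{k-1}$-measurable, it is exactly what guarantees $(X_{k-1},\tilde X_{k-1})\in D$ so that $\Pi^u_{Contr}$ is available, and since $G$ is contained in this event the final bound on $\mathbb P(\{X_{\mathfrak H}\neq\tilde X_{\mathfrak H}\}\cap G)$ is unaffected. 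The same adapted localisation handles the cross-epoch iteration: write $\{\text{stay in }D\text{ up to }i\mathfrak E\}=\{\text{stay in }D\text{ up to }(i-1)\mathfrak E\}\cap\{\text{stay in }D\text{ during epoch }i\}$, pull the first factor outside the conditional expectation by measurability, and absorb the second into the per-epoch estimate. With that correction your proof coincides with the paper's.
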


\medskip

\begin{proof}
On the same probability space, consider two copies of the adjusted chain $X_n\sim\nu\pi^n$ and $\widetilde X_n\sim\mu\pi^n=\mu$, one of which in stationarity.
Denote by $T$, $\widetilde T$ the first exit times from $D$ of $X_n$ and $\widetilde X_n$ respectively, and let $\mathfrak T=\min(T,\widetilde T)$. It is notationally convenient to introduce the epoch $m+1=\mathfrak E$ of transition steps and the total number of epochs  $k=\lceil\log(2/\eps)\rceil$ that will be needed to attain $\eps$ accuracy.  Thus, the total number of transitions to reach the desired accuracy will be $k (m+1)=\mathfrak H$.

\smallskip

To see that $k (m+1)$ transition steps of the adjusted chain do indeed suffice, below we will use Assumption~\ref{A_S} {\it(i)} - {\it(iii)} to prove that over each epoch the following bound holds for all $x,\tilde x\in D$:
\begin{equation}\label{minor}
    \mathbb{P}_{(x,\tilde x)}\Bigr(\{X_{m+1}\ne\widetilde{X}_{m+1}\}\cap\bigcap_{l=0}^m\{X_l,\widetilde X_l\in D\}\Bigr)\ \leq\ e^{-1}\;,
\end{equation}
where $\mathbb P_{(x,\tilde x)}$ is the distribution conditioned on $X_0=x$ and $\widetilde X_0=\tilde x$.
Iterating \eqref{minor} $k$ times  will then yield the desired $\TV$-convergence to $\eps$-accuracy. Indeed, by the coupling characterization of the $\TV$-distance, note that the $\TV$-distance to stationarity after $k(m+1)$ transition steps satisfies 
\begin{align}\label{eq:mixover}
        &\|\nu\pi^{k(m+1)}-\mu\|_{TV}
\ \leq\  \mathbb P\bigr(X_{k(m+1)}\ne\widetilde X_{k(m+1)}\bigr) \nonumber\\
& \qquad \leq\  \mathbb P\bigr(X_{k(m+1)}\ne\widetilde X_{k(m+1)},\mathfrak T\geq k(m+1)\bigr)\ +\ \mathbb P\bigr(\mathfrak T\leq k(m+1)\bigr)\;.  
\end{align} The second term  in \eqref{eq:mixover} describes the probability that at least one copy exits $D$ within $k (m+1)$ transition steps, and  by Assumption~\ref{A_S} {\it(iv)} satisfies
\[ \mathbb P\bigr(\mathfrak T\leq k(m+1)\bigr)
\ \leq\  \mathbb P\bigr(T\leq k(m+1)\bigr)+\mathbb P\bigr(\widetilde T\leq k(m+1)\bigr)
\ \overset{{\it(iv)}}{\leq} \  \eps/2\;. \]
On the other hand, in the first term in \eqref{eq:mixover} neither chain exits $D$.  Denote by $\mathcal F_n$ the $\sigma$-algebra generated by both copies up to transition step $n$.
Now, by \eqref{minor} and the Markov property, it holds that
\begin{align*}
        &\mathbb P\bigr(X_{k(m+1)}\ne\widetilde X_{k(m+1)},\mathfrak T\geq k(m+1)\bigr) \\
& \quad =\     \mathbb E\Bigr[\mathbb P\Bigr(  \begin{aligned}[t]
                                        &\{X_{k(m+1)}\ne\widetilde X_{k(m+1)}\}\cap\bigcap_{l=0}^m\{X_{(k-1)(m+1)+l},\widetilde X_{(k-1)(m+1)+l}\in D\}\Bigr| \\
                                        &\mathcal F_{(k-1)(m+1)}\Bigr)\,;\,X_{(k-1)(m+1)}\ne\widetilde X_{(k-1)(m+1)},\mathfrak T\geq (k-1)(m+1)\Bigr]
                                        \end{aligned} \\
& \quad =\     \mathbb E\Bigr[ \begin{aligned}[t]
                        &\mathbb P_{(X_{(k-1)(m+1)},\widetilde X_{(k-1)(m+1)})}\Bigr(\{X_{m+1}\ne\widetilde X_{m+1}\}\cap\bigcap_{l=0}^m\{X_l,\widetilde X_l\in D\}\Bigr)\,; \\
                        &X_{(k-1)(m+1)}\ne\widetilde X_{(k-1)(m+1)},\mathfrak T\geq (k-1)(m+1)\Bigr]
                        \end{aligned} \\
& \quad \overset{\eqref{minor}}{\leq}\  e^{-1}\,\mathbb P\Bigr(X_{(k-1)(m+1)}\ne\widetilde X_{(k-1)(m+1)},\mathfrak T\geq (k-1)(m+1)\Bigr) \\
& \quad \ \leq\ \cdots\ \leq\ e^{-k}\,\mathbb P(X_0\ne \widetilde X_0)\ \leq\ e^{-k}\ \leq\ \eps/2\;,
\end{align*}
where we used $\{X_{k(m+1)}\ne\widetilde X_{k(m+1)}\}\subseteq\{X_{(k-1)(m+1)}\ne\widetilde X_{(k-1)(m+1)}\}$ in the first equation, which holds by faithfulness, and the choice of $k$ in the last.
Since the $\TV$-distance to  stationarity $\|\nu\pi^{k(m+1)}-\mu\|_{TV}$ is non-increasing, this shows that $k (m+1)$ transition steps of the adjusted chain suffice for   $\varepsilon$ accuracy.

We are left to show \eqref{minor} by using Assumption~\ref{A_S} {\it(i)} - {\it(iii)}.
Let $x,\tilde x\in D$.
Denote the accept events in the $(n+1)$-th transition, i.e. from $X_n$ to $X_{n+1}$ and $\widetilde X_n$ to $\widetilde X_{n+1}$, by $A_{n+1}$ and $\widetilde A_{n+1}$ respectively.
Let $X^u_n$ and $\widetilde X_n^u$ be the corresponding copies of the underlying unadjusted chain and note that $X_n=X^u_n$ on $\bigcap_{l=0}^{n-1}A_{l+1}$.
Considering just one epoch consisting of $m+1$ transition steps, {\it(iii)} allows to restrict to the case that the Metropolis filter does not intervene over the epoch so that the probability that there exists a coupling of the adjusted chains which induces meeting is determined by the corresponding probability for the underlying unadjusted chains.  More precisely, 
\begin{align*}
        &\mathbb{P}_{(x,\tilde x)}\Bigr(\{X_{m+1}\ne\widetilde{X}_{m+1}\}\cap\bigcap_{l=0}^m\{X_l,\widetilde{X}_l\in D\}\Bigr) \\
& \quad \leq\   \begin{aligned}[t]
        &\mathbb{P}_{(x,\tilde x)}\Bigr(\{X^u_{m+1}\ne\widetilde{X}^u_{m+1}\}\cap\bigcap_{l=0}^m(A_{l+1}\cap\widetilde{A}_{l+1})\cap\bigcap_{l=0}^m\{X_l,\widetilde{X}_l\in D\}\Bigr) \\
        &+ \sum_{l=0}^m\:\Bigr[\mathbb{P}_x\bigr(A_{l+1}^c\cap\{X_l\in D\}\bigr)+\mathbb{P}_{\tilde x}\bigr(\widetilde{A}_{l+1}^c\cap\{\widetilde{X}_l\in D\}\bigr)\Bigr]
        \end{aligned}
\end{align*}
with the second term bounded by $2(m+1)\sup_{x\in D}\mathbb{P}(A(x)^c)\leq2(3e)^{-1}$ by {\it(iii)}.  For the first term, we employ $m$ steps of the contractive coupling in {\it(i)} which brings the two copies of the unadjusted chain sufficiently close together for one step of the regularizing coupling in {\it(ii)} to induce exact meeting.
This yields
\begin{align*}
        &\mathbb{P}_{(x,\tilde x)}\Bigr(\{X^u_{m+1}\ne\widetilde{X}^u_{m+1}\}\cap\bigcap_{l=0}^m(A_{l+1}\cap\widetilde{A}_{l+1})\cap\bigcap_{l=0}^m\{X_l,\widetilde{X}_l\in D\}\Bigr) \\
& \quad \leq\  \mathbb{P}_{(x,\tilde x)}\Bigr(\Pi^u_{Reg}\bigr((X_m^u,\widetilde{X}_m^u),\Delta^c\bigr)\,;\,\bigcap_{l=0}^m\{X^u_l,\widetilde{X}^u_l\in D\}\Bigr) \\
& \quad \overset{{\it(ii)}}{\leq}\    C_{Reg}\:\mathbb{E}_{(x,\tilde x)}\Bigr(d(X_m^u,\widetilde{X}_m^u)\,;\,\bigcap_{l=0}^{m-1}\{X^u_l,\widetilde{X}^u_l\in D\}\Bigr) \\
& \quad \overset{{\it(i)}}{\leq} \    C_{Reg}\,e^{-\rho m}d(x,\tilde x)
\ \leq\    C_{Reg}\,Re^{-\rho m}
\ \leq\    (3e)^{-1}\;,
\end{align*}
where in the last two steps $\diam(D)\leq R$ and the definition of $m$ were used respectively.  
\end{proof}

\begin{remark}[Scope of Coupling Framework]
\label{rmk:broad}
A remarkable feature of the coupling framework  presented in this section is that it uses localization to boost local mixing of the unadjusted kernel to mixing of the adjusted kernel.  This feature is enabled by Assumption~\ref{A_S} {\it(iv)} which localizes the entire coupling argument to the domain $D$.  In particular, the assumption that the unadjusted kernel admits a locally contractive coupling and a local one-shot coupling  (i.e., Assumption~\ref{A_S} {\it(i)} and {\it(ii)})
does not impose global restrictions, such as regularity or convexity, on the stationary distribution.  Therefore, this new coupling framework is broadly applicable including, i.p., to stationary distributions whose log-density is non-globally gradient or Hessian Lipschitz or non-globally concave.
\end{remark}

\section{Application to a non-reversible, adjusted Markov chain}

\label{sec:MAKLA}

Although there are numerous non-asymptotic convergence results for kinetic Langevin diffusions \cite{cheng2018underdamped,cheng2018sharp, dalalyan2020sampling, eberle2019couplings,cao2019explicit} and their unadjusted discretizations \cite{cheng2018underdamped,cheng2018sharp,dalalyan2020sampling,shen2019randomized,monmarche2021high}, quantitative mixing time guarantees for \emph{adjusted} discretizations are comparatively scarce.
In view of this underdevelopment, and as an application of Theorem~\ref{thm:mix}, 
mixing time guarantees for a non-reversible, adjusted Markov chain based on a discretization of the kinetic Langevin diffusion are given in Theorem \ref{thm:MAKLAmix} of this section.

\subsection{Metropolis-adjusted Kinetic Langevin Algorithm (MAKLA)}

Consider an absolutely continuous probability distribution on $\mathbb{R}^d$ of the form $\mu_{target}(dx) \propto e^{-U(x)} dx$, where  $U: \mathbb{R}^d \to \mathbb{R}$ is a continuously differentiable potential energy function.  Here we analyze the mixing of an MCMC method aimed at  $\mu_{target}$ based on the kinetic Langevin diffusion \begin{equation}
\label{eq:uDL}
\mathrm d X_t\ =\ V_t\:\mathrm dt \;, \quad \mathrm d V_t\ =\ -\nabla U(X_t)\:\mathrm dt - \gamma V_t\:\mathrm dt + \sqrt{2 \gamma}\:\mathrm d B_t\;,
\end{equation}      
where $B_t$ is a standard $d$-dimensional Brownian motion and $\gamma>0$ is the friction. 
Let $I_d$ be the $d \times d$ identity matrix.
A key property of \eqref{eq:uDL} is that it leaves invariant the probability measure
\begin{equation}
\label{eq:muk} \mu(dz) \ = \  \mu_{target} \otimes \mathcal{N}(0, I_d)(dx\;dv)  \ \propto \ e^{-H(z)} \; dz
\end{equation}
on phase space $z=(x,v)\in\mathbb R^{2d}$ with energy
\[ H(z)\ =\ \frac{1}{2} |v|^2\ +\ U(x)\;. \]


A variety of discretizations of \eqref{eq:uDL} can  be Metropolis-adjusted \cite{ScLeStCaCa2006,LeRoSt2010, BoVa2010,Bo2014} and fit the framework \eqref{eq:X}. Here we focus on a symmetric Strang splitting   \cite{BuDoPa2007,Bo2014,BePiSaSt2011},  where the splitting components are given by
\begin{enumerate}[label=\arabic*.]
\item  the Ornstein-Uhlenbeck (OU) flow
\[ \mathrm d X_t\ =\ 0 \;, \quad \mathrm d V_t\ =\ - \gamma V_t\:\mathrm dt + \sqrt{2 \gamma}\:\mathrm d B_t\;, \]
\item  the purely potential flow
\[ \mathrm d X_t\ =\ 0 \;, \quad \mathrm d V_t\ =\ -\nabla U(X_t)\:\mathrm dt\;, \quad \text{and} \]
\item the purely kinetic flow
\[ \mathrm d X_t\ =\ V_t\:\mathrm dt \;, \quad \mathrm d V_t\ =\ 0\;. \]
\end{enumerate}

\noindent
The corresponding discretized flows are for
\begin{enumerate}[label=\arabic*.]
\item  the OU-substep 
 \begin{align}
 \label{eq:OU}
    O_{h}(\mathsf{b}) (x,v) \ &= \ (x, e^{-\gamma h } v + (1- e^{-2 \gamma h } )^{\frac{1}{2}}  \, \mathsf{b}  ) \;,   \quad \mathsf{b} \in \mathbb{R}^d \;,
    \end{align}

\item  the B-substep for the \emph{kick} due to the potential part 
    \begin{align}
\label{eq:B_flow} \theta_h^{(B)}(x,v) \ = \
    \Big( x,~ v - h \nabla U(x) \Big) \;, \quad \text{and}
    \end{align}
    \item the A-substep for the \emph{drift} due to the kinetic part
\begin{equation}
\label{eq:A_flow} \theta_h^{(A)}(x,v)\ =\ 
    \Big( x+ h  v, v \Big) \;.
\end{equation}
\end{enumerate}

Combining these flow maps in the following palindromic fashion yields the \emph{unadjusted kinetic Langevin algorithm} (UKLA) with transition step given by  \begin{equation}
(X_1^u, V_1^u) \ = \ O_{h/2}(\xi_{2}) \circ \theta^{(A)}_{h/2} \circ \theta^{(B)}_h \circ \theta^{(A)}_{h/2} \circ O_{h/2}(\xi_{1}) (X_0^u, V_0^u)\;,
\label{eq:OABAO} 
\end{equation} where  $\xi_{1}, \xi_{2}$ are i.i.d.~random variables with distribution $\mathcal{N}(0,I_d)$.  This discretization  is commonly referred to as ``OABAO'' where each letter refers to either \eqref{eq:OU}, \eqref{eq:B_flow} or \eqref{eq:A_flow}.
For the sequel, it is convenient to introduce
\begin{equation}
\label{eq:thetah}
\theta_h\ =\ \theta_{h/2}^{(A)} \circ \theta_{h}^{(B)} \circ \theta_{h/2}^{(A)}\;. \end{equation}
By construction, $\theta_h$ is both volume-preserving and reversible \cite{HaLuWa2010,BoSaActaN2018}.
The transition kernel  of UKLA is given by $\pi^u = \Xi \Theta \Xi$, where
\begin{align*}
\Xi((x,v), \,\cdot\, ) \ &= \ \delta_x \otimes \mathcal{N}\big(e^{-\gamma h/2} v, (1-e^{-\gamma h }) \,  I_d \big) \;, \\
\Theta((x,v), \,\cdot\,) \ &= \  \delta_{\theta_{h}(x,v)}\;.
\end{align*}
Due to asymptotic bias,   UKLA does not leave $\mu$ invariant, i.e., $\mu\pi^u\ne\mu$.   This failure is not surprising, since although the OU steps leave $\mu$ invariant and $\theta_h$ is volume-preserving, the time discretization induces an energy error under $\theta_h$, i.e., $(H\circ\theta_h-H) \not\equiv  0$, which is the root cause of the asymptotic bias.

\medskip

The OABAO scheme can be readily Metropolis-adjusted by simply adjusting $\theta_h$, which is possible since $\theta_h$ is both volume-preserving and reversible \cite[Prop.~5.1]{BoSaActaN2018}; see also \cite[Theorem 2]{Tierney1998}.
The resulting algorithm is called the \emph{Metropolis-adjusted kinetic Langevin algorithm}  (MAKLA)  with transition step \begin{equation}
(X_1, V_1) \ = \ O_{h/2}(\xi_{2}) \circ \hat{\theta}_h(\mathcal{U}) \circ O_{h/2}(\xi_{1}) (X_0, V_0)\:,  \label{eq:OMABAO} 
\end{equation} where $\mathcal{U}\sim\mathrm{Unif}(0,1)$ is independent of the other random variables and the state of the chain, and the Metropolis-adjusted  integrator is defined through the mapping \begin{align}
\hat{\theta}_h(\mathsf{u})(x,v) \ = \ \begin{cases} \theta_h(x,v) & \text{if $\mathsf{u} \leq \alpha((x,v),\theta_h(x,v))$,} \\
\mathcal{S}(x,v) & \text{else,}
\end{cases}
\label{eq:thetahat}
\end{align}
where $\alpha((x,v),(x',v'))=\exp(-(H(x',v')-H(x,v))^+)$ is the accept probability, $\mathcal{S}(x,v)=(x,-v)$ is the velocity flip involution, and $[ \cdot ]^+ = \max(0, \cdot)$.
The transition kernel  of MAKLA is  $\pi = \Xi \widetilde\Theta \Xi$ with
\[
\widetilde\Theta((x,v), dx' \, dv') \ = \
    \begin{aligned}[t]
    &\alpha((x,v), (x',v')) \, \delta_{ \theta_{h}(x,v) } ( dx' \, dv') \\
    &+ \big(1 - \alpha((x,v),(x',v')) \big) \, \delta_{\mathcal{S}(x,v)}(dx' \, dv')  \;.
    \end{aligned}
\]
It is easily verified that $\pi$  leaves $\mu$ invariant, i.e., $\mu\pi = \mu$, and therefore, the $x$-marginal of the corresponding Markov chain can be used to  sample from $\mu_{target}$.

\subsection{Assumptions \& Additional Notation}

  For simplicity, we focus on strongly log-concave target distributions with gradient Lipschitz log-densities having bounded third derivatives.
More precisely, we fix the following assumptions:

\begin{assumption}{\bf1}\label{A_K}
Suppose $U$ is $K$-strongly convex, i.e., there exists $K>0$ such that \[
\bigr( \nabla U(x) - \nabla U(y)\bigr) \cdot (x - y) \ \ge \ K | x - y |^2 \quad \text{for all $x,y \in \mathbb{R}^d$.}
\]
\end{assumption}

\begin{assumption}{\bf2}\label{A_L}
Suppose $U$ has a global minimum at $0$, $U(0)=0$, and $U$ is $L$-gradient Lipschitz continuous, i.e., there exists $L >0$ such that \[
\bigr|\nabla U(x) - \nabla U(y)\bigr| \ \le \ L \, |x-y| \quad \text{for all $x,y \in \mathbb{R}^d$.} \]
\end{assumption}

Below it is sometimes convenient to write the results and conditions in terms of the \emph{condition number} of the target distribution defined in the usual way by $\kappa = L/K$.  Define the third derivative via the trilinear product
\[ \nabla^3U(x)\,\vdots\,(a\otimes b\otimes c)\ =\ \sum\nolimits_{i,j,k=1}^d\partial^3_{ijk}U(x)\,a_ib_jc_k\quad \text{for $x,a,b,c \in \mathbb{R}^d$.} \]

\begin{assumption}{\bf3}\label{A_LH}
Suppose $U\in C^3(\mathbb{R}^d)$ is $L_H$-Hessian Lipschitz, i.e., there exists $L_H \ge 0$ such that for all $x,y\in\mathbb R^d$, it holds
\[ \bigr|\nabla^3U(x)\,\vdots\,(a\otimes b\otimes c)\bigr|\ \leq\ L_H|a|\,|b|\,|c|\quad \text{for all $x,a,b,c \in \mathbb{R}^d$.} \]
\end{assumption}

Define the sets of model parameters and user-specified hyperparameters to be $\mathcal M=\{d,K,L,L_H\}$ and $\mathcal H=\{\eps,\nu,\gamma,h\}$, respectively.
Since we mainly care about the non-logarithmic dependencies of the mixing time on the underlying model parameters, and for the sake of legibility of expressions, we often suppress logarithmic dependencies on parameters in $\mathcal M$ by using the notation: for two quantities $\mathsf{x},\mathsf{y}\in\mathbb R$, we write $\mathsf{x}=\widetilde{\mathcal{O}}(\mathsf{y})$ if there exists $C>0$ depending at most logarithmically on any parameter in $\mathcal M$  such that $\mathsf{x}\leq C \mathsf{y}$. The symbol $\mathcal O$ is defined similarly except that it expresses all logarithmic dependencies.

\begin{assumption}{\bf4}
Regarding the user-tuned hyperparameters, let $0<\eps\leq1/2$ and suppose $\nu\in\mathcal P(\mathbb R^d)$ such that $\log\nu(e^{H/8})$ depends at most polynomially on the model parameters, i.e., there exist constants $n_1,n_2,n_3,n_4\in\mathbb Z$ such that
\begin{equation}\label{nuO}
    \log\nu(e^{H/8})\ =\ \widetilde{\mathcal{O}}\bigr(d^{n_1}K^{n_2}L^{n_3}L_H^{n_4}\bigr)\;.
\end{equation}
Further, let $\gamma,h>0$ satisfy
\[ L^{1/2}\gamma^{-1}\ \leq\ 1/10\quad\text{and}\quad\gamma h\ \leq\ 1\;, \]
as well as $\log(1/h)=\widetilde{\mathcal{O}}(1)$.
\label{A_h}
\end{assumption}
Note that \eqref{nuO} and the last part of \ref{A_h} pose no relevant restriction because exponential dependencies on model parameters of the quantities of interest are unrealistic.

\begin{remark}[Possibilities to Relax the Assumptions]
There are several possibilities the assumptions made above can be relaxed while sustaining the mixing guarantees of Theorem~\ref{thm:MAKLAmix}.
First, the global strong convexity assumption in \ref{A_K} can be relaxed to asymptotic strong convexity by employing a more sophisticated coupling in \ref{A_S} {\it(i)} as developed in \cite{BoEbZi2020,cheng2018sharp, BouRabeeSchuh2023}. However, the resulting contraction rates will depend on underlying parameters in a more intricate way.
Second, as emphasized in Remark~\ref{rmk:broad}, both the global gradient and Hessian Lipschitz continuity in \ref{A_L} and \ref{A_LH} as well as the global convexity in \ref{A_K} can be replaced with local versions.  In particular, convexity in a suitable shell suffices.
\end{remark}

\subsection{Mixing Guarantees for MAKLA}

We are now in position to state upper bounds on the mixing time of MAKLA as defined in \eqref{eq:tmix} with $\mu$ given by \eqref{eq:muk}.
\begin{theorem}\label{thm:MAKLAmix}
Suppose Assumptions~\ref{A_K}-\ref{A_h} hold. Then there exists $\bar{h}>0$ with
\[
        (\overline{h})^{-1}
\ =\    \widetilde{\mathcal{O}}\Bigr[
        \begin{aligned}[t]
        &(L^{1/2}\gamma^{-1})^{-1/2}\kappa\log(1/\eps) \\
        &\times\Bigr(
            \begin{aligned}[t]
            &L_H^{1/2}K^{-1/4}d^{3/4}\max\bigr((L^{1/2}\gamma^{-1})^{-2},(\kappa d)^{-1}\log\nu(e^{H/8})\bigr)^{3/4} \\
            &+L^{1/2}d^{1/2}\max\bigr((L^{1/2}\gamma^{-1})^{-2},(\kappa d)^{-1}\log\nu(e^{H/8})\bigr)^{1/2}\Bigr)\Bigr]
            \end{aligned}
        \end{aligned}
\]
such that for all $h\leq\overline{h}$, it holds that
\[ \tmix(\eps,\nu)\ =\ \widetilde{\mathcal{O}}\bigr(h^{-1}K^{-1}\gamma\log(1/\eps)\bigr) \;. \]

\end{theorem}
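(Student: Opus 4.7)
The plan is to apply Theorem~\ref{thm:mix} to the MAKLA kernel by verifying Assumption~\ref{A_S} in turn. On phase space $\mathbb{R}^{2d}$ I would work with a twisted Euclidean norm of the form $\|(x,v)\|_{\mathrm{tw}}^2 = |x + \lambda v|^2 + \lambda^2 |v|^2$ with $\lambda = \Theta(1/\gamma)$, well-adapted to the kinetic Langevin flow, and take the localization domain $D = \{z \in \mathbb{R}^{2d} : H(z) \leq R_H\}$ to be a sublevel set of the Hamiltonian. The radius $R_H$ is to be chosen just large enough for \ref{A_S}~\textit{(iv)} to hold, which will produce a twisted diameter $R = \widetilde{\mathcal{O}}(\sqrt{R_H/K})$.

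For \ref{A_S}~\textit{(i)} I would take the synchronous coupling of the two Gaussian OU noises $\xi_1, \xi_2$. Under \ref{A_K}, \ref{A_L}, and the hyperparameter constraint $L^{1/2}\gamma^{-1}\leq 1/10$ from \ref{A_h}, a direct Jacobian computation for the OABAO map gives a contraction rate $\rho = \Theta(Kh/\gamma)$ in the twisted metric, in the spirit of \cite{monmarche2021high,dalalyan2020sampling}. For \ref{A_S}~\textit{(ii)} I would reflection/maximal-couple the final OU noise $\xi_2$; since the velocity component has a Gaussian component of variance $1 - e^{-\gamma h} \asymp \gamma h$, the one-shot coupling cost is $C_{\mathrm{Reg}} = \widetilde{\mathcal{O}}(\gamma^{1/2}/h^{1/2})$. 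Together these give the epoch length $\mathfrak E = \widetilde{\mathcal{O}}(\gamma/(Kh))$, which is exactly what produces the $h^{-1}K^{-1}\gamma \log(1/\eps)$ scaling in the asserted mixing-time bound.

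The main obstacle is \ref{A_S}~\textit{(iii)}. The rejection probability satisfies $\mathbb{P}(A(z)^c) \leq \mathbb{E}[(H\circ\theta_h(z) - H(z))^+]$, and a careful Taylor expansion of $H \circ \theta_h$ along the Strang splitting $\theta_{h/2}^{(A)} \circ \theta_h^{(B)} \circ \theta_{h/2}^{(A)}$ exhibits the local energy error as a sum of two dominant contributions: a third-order term of size $\sim L_H h^3 |v|^3$ arising from the third derivative of $U$ (using \ref{A_LH}), and a second-order cross term of size $\sim L^{1/2} h^2 |v|$ coming from the $A$--$B$--$A$ commutator (using \ref{A_L}). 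Bounding $|v|\leq\sqrt{2R_H}$ on $D$ and substituting into the high-acceptance inequality $\mathfrak E \cdot \sup_{z \in D} \mathbb{P}(A(z)^c) \leq (3e)^{-1}$ yields a step-size constraint of roughly the form $h^{-2} \gtrsim \gamma K^{-2}(L_H R_H^{3/2} + L R_H^{1/2})$, which, after extracting square roots, produces exactly the two summands with exponents $3/4$ and $1/2$ in the stated bound on $(\overline h)^{-1}$.

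Finally, for \ref{A_S}~\textit{(iv)} I would set up a supermartingale estimate with Lyapunov function $V(z) = e^{H(z)/8}$. Under \ref{A_K} one can show that $V$ is essentially contracted by $\pi$ up to a small $h$-dependent remainder, so that both $\mathbb{E}_{X_0\sim\nu}V(X_n)$ and $\mathbb{E}_{X_0\sim\mu}V(X_n)$ remain controlled for all $n \leq \mathfrak H$; Markov's inequality and a union bound over $\mathfrak H$ steps then force the choice $R_H = \widetilde{\mathcal{O}}\bigl(Kd \cdot \max\bigl((L^{1/2}\gamma^{-1})^{-2},(\kappa d)^{-1}\log\nu(e^{H/8})\bigr)\bigr)$, which is precisely what produces the $\max(\cdot,\cdot)$ structure in the theorem (one regime dictated by the tails of $\mu$, the other by the tails of $\nu$ via \ref{A_h}), and the $\log(1/\eps)$ prefactor in $(\overline h)^{-1}$ arises from $R_H$ depending on $\log(\mathfrak H/\eps)$. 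Combining all four verifications and invoking Theorem~\ref{thm:mix} then gives $\tmix(\eps,\nu) \leq \mathfrak H = \widetilde{\mathcal{O}}(h^{-1}K^{-1}\gamma \log(1/\eps))$.
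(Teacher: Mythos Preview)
Your overall plan---verify Assumption~\ref{A_S} for MAKLA and invoke Theorem~\ref{thm:mix}---is exactly the paper's strategy, and your choices for \textit{(i)} (synchronous coupling with rate $\rho=\Theta(Kh/\gamma)$) and \textit{(iv)} (Lyapunov function $e^{H/8}$) match the paper. Two of the steps, however, have genuine problems.

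\textbf{One-shot coupling for \textit{(ii)}.} Coupling only the terminal noise $\xi_2$ cannot produce meeting. In the OABAO step $O_{h/2}(\xi_2)\circ\theta_h\circ O_{h/2}(\xi_1)$, the final substep $O_{h/2}(\xi_2)$ acts on the velocity alone, so the output \emph{position} is a deterministic function of $(z,\xi_1)$. If $z\ne\tilde z$, no choice of $\tilde\xi_2$ can make the positions coincide, and hence $\Pi^u_{Reg}((z,\tilde z),\Delta)=0$ for your coupling. The paper instead couples \emph{both} noises via a one-shot map $\Phi:(\xi_1,\xi_2)\mapsto(\tilde\xi_1,\tilde\xi_2)$ defined implicitly by requiring the full outputs to agree (Lemma~\ref{lem:overlap_pukLa}), and bounds $\|\law(\xi_1,\xi_2)-\law(\Phi(\xi_1,\xi_2))\|_{\TV}$ by a change-of-variables argument (Lemma~\ref{lem:overlap}); this gives $C_{Reg}=\mathcal O\bigl((\gamma h)^{-3/2}+\gamma^{-1}L_Hd^{1/2}h^2\bigr)$. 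Because $C_{Reg}$ enters $\mathfrak E$ only through $\log(C_{Reg}R)$ the precise power of $h$ is immaterial for the final rate, but any valid one-shot bound must couple $\xi_1$.

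\textbf{Energy error for \textit{(iii)}.} Your expansion $\Delta H\sim L_Hh^3|v|^3+L^{1/2}h^2|v|$ is incorrect: under \ref{A_LH} both dominant terms are of order $h^3$. The paper shows (Lemma~\ref{lem:posVerlet_deltaH}) that $|\Delta H(z)|\le 2L_Hh^3\mathcal E(z)^{3/2}+L^{3/2}h^3\mathcal E(z)$ with $\mathcal E(z)=|v|^2+L^{-1}|\nabla U(x)|^2$; the second contribution comes from the $h^3\,v\cdot\nabla^2U(x)\nabla U(x)$ term in the Taylor expansion, not an $h^2$ commutator. Following your $h^2$ term through the constraint $\mathfrak E\sup_D\mathbb P(A^c)\le(3e)^{-1}$ would yield the wrong exponent on the radius in the second summand (your own formula $h^{-2}\gtrsim\gamma K^{-2}(L_HR_H^{3/2}+LR_H^{1/2})$ gives $R_H^{1/4}$ in $(\overline h)^{-1}$, not the $R_U^{1/2}$ that produces the theorem's $d^{1/2}$). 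Relatedly, the paper localizes to $\{\mathcal E\le R_U\}$ rather than $\{H\le R_H\}$ precisely because $\mathcal E$ is what appears naturally in the energy-error bound.
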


For a fixed step size $h\leq\overline{h}$, Theorem~\ref{thm:MAKLAmix} guarantees that starting in $\nu$, $\tmix(\eps,\nu)$ transition steps of MAKLA suffice to ensure $\eps$-accuracy  in $\TV$.
The assumptions on the initial distribution are minimal.
In particular, cold start distributions are covered, i.e.,  $\nu=\delta_z$ for some $z\in\mathbb R^{2d}$.

\begin{remark}[Mixing Guarantee]
Note that if
\begin{equation*}
\gamma\ =\ \mathcal O(L^{1/2})\qquad\text{and}\qquad\log\nu(e^{H/8})\ = \ \widetilde{\mathcal{O}}(\kappa d)\;,
\end{equation*}
Theorem \ref{thm:MAKLAmix} asserts that for $h=\overline{h}$,
\begin{equation}\label{mixresult}
    \tmix(\eps,\nu)\ =\ \widetilde{\mathcal{O}}\Bigr[\kappa^{3/2}\max\bigr(L_H^{1/2}(d/K)^{3/4},\,L^{1/2} (d/K)^{1/2}\bigr)\log^2(1/\eps)\Bigr]
\end{equation}
since in this case
\begin{equation}\label{hresult}
        (\overline{h})^{-1}
\ =\    \widetilde{\mathcal{O}}\Bigr[\kappa^{1/2} L^{1/2}\max\bigr(L_H^{1/2}(d/K)^{3/4},\,L^{1/2}( d/K)^{1/2}\bigr)\log(1/\eps)\Bigr]\;.
\end{equation}
This choice of $\gamma$ minimizes the mixing time upper bound while still satisfying \ref{A_h}.  Moreover, the assumption on $\nu$ is mild; e.g.,  it is satisfied by all cold starts in $z\in\mathbb R^{2d}$ such that $H(z)/8=\log\delta_z(e^{H/8})=\mathcal{O}(\kappa d)$.
To put this in perspective, note that the Gaussian measure $\nu=\mathcal N(0,A^{-1})\otimes\mathcal N(0,I_d)$ with energy $H(z)=\frac{1}{2}|v|^2+\frac{1}{2}|A^{1/2}x|^2$ amounts to $\log\nu(e^{H/8})=d\log(8/7)$.
\end{remark}

\begin{remark}[Dimension Dependence]
Remarkably, the dimension scaling obtained in \eqref{mixresult} is optimal in the \emph{high acceptance regime}, cf. Definition~\ref{def:HAR}, from a cold start distribution as illustrated by
\begin{equation}\label{exU}
    U(x)\ =\ \frac{1}{2}x\cdot\operatorname{diag}\bigr(2,1,\dots\bigr)x-\sin(x_1)\;.
\end{equation}
Denote by $e_1$ the unit vector in the first component and consider the collection $\mathcal C=\{\delta_{(0,d^{1/2}e_1)}\pi^n:n\geq0\}$ corresponding to a cold start in $(0,d^{1/2}e_1) \in \mathbb{R}^{2d}$.
According to \eqref{HAR}, $\pi$ being in the high acceptance regime on $\mathcal C$ requires
\begin{equation}\label{HARex}
    h^{-1}\,\mathbb P\bigr(A(0,d^{1/2}e_1)^c\bigr)\ =\ \mathcal O(1)\;,
\end{equation}
where we used that the mixing time $\tmix^{exact}$, cf. \eqref{tmixexact}, of the transition kernel of the kinetic Langevin diffusion over time $h$ is of order $h^{-1}$.
Expanding \eqref{eq:deltaH} shows that the energy error to leading order is
\[ (H\circ\theta_h-H)(x,v)\ =\ \frac{h^3}{24}\bigr(\nabla^3U(x)\,\vdots\,v^{\otimes3}-6v\cdot\nabla^2U(x)\nabla U(x)\bigr)+\mathcal O(h^4)\;. \]
For $U$ as in \eqref{exU}, it hence holds that
\[ (H\circ\theta_h-H)(0,d^{1/2}e_1)\ =\ \frac{h^3}{24}\bigr(d^{3/2}+12d^{1/2}\bigr)+\mathcal O(h^4)\;. \]
Since the reject probability from cold start in $(0,d^{1/2}e_1)$ is given by
\[ \mathbb P\bigr(A(0,d^{1/2}e_1)^c\bigr)\ =\ 1-\mathbb Ee^{-(H\circ\theta_h-H)\circ O_{h/2}(\xi_1)(0,d^{1/2}e_1)^+}\;, \]
and the OU step to leading order in $h$ equals the identity, \eqref{HARex} implies
\[ h^2d^{3/2}\ =\ \mathcal O(1)\;. \]
\end{remark}

\begin{remark}[Condition Number Dependence]
In Lemma \ref{lem:OABAO_contr}, UKLA is shown to converge to its stationary distribution with rate $\rho \propto K\gamma^{-1}h$, which under \ref{A_h} is at best $\kappa^{-1}$ for $h^{-1}$ of order $L^{1/2}$.
This rate differs from the optimal rate obtained for the kinetic Langevin diffusion under warm start \cite{cao2019explicit}.
Passing to MAKLA via Theorem \ref{thm:mix} further increases scaling in condition number.
In \eqref{mixresult}, the additional $\kappa^{1/2}$ in front of the maximum is expected for \ref{A_S} {\it(iii)} to hold with $\rho$ and the energy error bounds of Lemma \ref{lem:posVerlet_deltaH}.
However, due to the linear appearance of $\gamma h \mathfrak H$ in \eqref{RU} (cf. Remark \ref{rmk:flip}), \ref{A_S} {\it(iii)} requires the extra $K^{-3/4}$ and $K^{-1/2}$ inside the maximum.
At present, the optimal condition number dependence for either UKLA or MAKLA from a cold start distribution is not known.
\end{remark}


\begin{proof}[Proof of Theorem~\ref{thm:MAKLAmix}]
To invoke Theorem \ref{thm:mix}, it suffices to verify Assumption \ref{A_S} for MAKLA, which as described below, relies on  ingredients developed in \S\ref{sec:MAKLAmix_ingredients}.

\smallskip

Let $S=\mathbb R^{2d}$ with metric induced by the twisted norm $\twnorm{\cdot}$ defined in \eqref{eq:tw}. Define the domain $D=\{\mathcal E(z)\leq R_U\}$ for some $R_U\geq2$ to be determined momentarily and where $\mathcal{E}$ is the energy-like function defined in \eqref{eq:calE}.
Note that $K|x|\leq|\nabla U(x)|$ by \ref{A_K} and \ref{A_L}, and hence,
\[ \twnorm{z}^2\ \overset{\eqref{eq:tw_equiv}}{\leq} \ \frac{17}{16}\bigr(|v|^2+\gamma^2|x|^2\bigr)\ \leq\ \frac{17}{16}\frac{L\gamma^2}{K^2}\mathcal E(z)\;, \]
where in the last step we used \ref{A_h} to factor out $L\gamma^2/K^2\geq36\kappa^2\geq1$.
Thus,
\[ \diam_{\twnorm{\cdot}}(D)\ \leq\ 2\sup_{z\in D}\twnorm{z}\ \leq\ 3\frac{L^{1/2}\gamma}{K}R_U^{1/2} \ =: \ R \;, \]
which specifies $R$ in \ref{A_S}.

\smallskip

\noindent
Regarding the unadjusted transition kernel, 
\begin{itemize}
\item Assumption~\ref{A_S} {\it(i)}
holds by Lemma \ref{lem:OABAO_contr} and \ref{A_h} with rate
\[ \rho\ =\ \frac{\gamma h}{34\sqrt{e}}\min\bigr(K\gamma^{-2},1\bigr)\ =\ \frac{K\gamma^{-1}h}{34\sqrt{e}}\;; \quad \text{and,}  \]
\item Assumption~\ref{A_S} {\it(ii)}
holds by Lemma \ref{lem:overlap_pukLa} and \eqref{eq:tw_equiv} with
\[ C_{Reg}\ =\ 14\bigr((\gamma h)^{-3/2}+\gamma^{-1}L_Hd^{1/2}h^2\bigr)\;. \]
\end{itemize}
This completes the verification of Assumption~\ref{A_S} {\it(i)} and {\it(ii)}.

It remains to verify Assumption~\ref{A_S} {\it(iii)} and {\it(iv)}, which concern the adjusted transition kernel.  To this end, the epoch of transition steps $\mathfrak E$ and the total number of transition steps $\mathfrak H$ play a pivotal role.
Assumption \ref{A_h} implies $\log(\gamma C_{Reg})=\widetilde{\mathcal{O}}(1)$.  Thus,  
\[ \mathfrak E\ =\ \widetilde{\mathcal{O}}\bigr(K^{-1}\gamma h^{-1} \log R_U\bigr)\quad\text{and}\quad\mathfrak H\ =\ \widetilde{\mathcal{O}}\bigr(K^{-1}\gamma h^{-1} \log R_U\log(1/\eps)\bigr)\;. \]
Since {\it(iii)} depends on $R_U$, which needs to be chosen sufficiently large for the exit probability bound in {\it(iv)} to hold, we first verify {\it(iv)}.


\begin{itemize}
\item To verify Assumption~\ref{A_S}  {\it(iv)}, we invoke Lemma \ref{lem:exit} as follows.
By Lemma \ref{lem:posVerlet_deltaH}, $C_{\Delta H}=4L$ and $k=2$ in Lemma \ref{lem:exit}.
Moreover, \eqref{exit_h} holds due to \ref{A_h}.
Define $\overline{h}_1>0$ to saturate the bound $400L\mathfrak Hh^2\leq1$ and let $h\leq\overline{h}_1$.
We now select $R_U$ to counter-saturate the bound
\begin{equation}\label{RU}
    R_U\ \geq\ 32\Bigr[\gamma h \mathfrak Hd+\log\Bigr(\frac{4}{\eps}\max\bigr(\nu(e^{H/8}),(2\kappa)^{d/2}\bigr)\Bigr)\Bigr]\;,
\end{equation}
where we inserted $\mu(e^{H/8})\leq(2\kappa)^{d/2}$ by \ref{A_K} and \ref{A_L}.  By Lemma \ref{lem:exit}, this choice of $R_U$ ensures  Assumption~\ref{A_S} {\it(iv)} to hold starting from both $\nu$ and $\mu$.
Since the right hand side of \eqref{RU} depends logarithmically on $R_U$, note that
\[ R_U\ =\ \widetilde{\mathcal{O}}\Bigr(\max\bigr(K^{-1}\gamma^2d,\,\log\nu(e^{H/8})\bigr)\log(1/\eps)\Bigr)\;, \]
which implies
$(\overline{h}_1)^{-1}\ =\ \widetilde{\mathcal{O}}\bigr(\kappa\gamma\log R_U\log(1/\eps)\bigr)\ =\ \widetilde{\mathcal{O}}\bigr(\kappa\gamma\log(1/\eps)\bigr) $.
\item Finally, we verify Assumption~\ref{A_S}  {\it(iii)}.
Leveraging: (a) the higher order bound of Lemma \ref{lem:posVerlet_deltaH}; (b) the bounds 
\begin{align*}
    &\mathbb E\,\mathcal E(O_{h/2}(\xi_1)(z))\ 
    \leq\ \mathcal E(z)+\gamma hd\;,\quad\text{as well as}\\
    &\mathbb E\,\mathcal E(O_{h/2}(\xi_1)(z))^{3/2}\ \leq\ 4\bigr(\mathcal E(z)^{3/2}+3(\gamma hd)^{3/2}\bigr)
\end{align*}
that each hold for all $z\in\mathbb R^{2d}$; and (c) the definition of $D$ yields
\begin{align*}
        & \mathfrak E\:\sup_{z\in D}\mathbb P(A(z)^c)
\ \leq\ \mathfrak E\:\sup_{z\in D}\mathbb E|\Delta H\circ O_{h/2}(\xi_1)(z)| \\
&\quad \leq\ \mathfrak Eh^3\sup_{z\in D}\bigr(2L_H\mathbb E\,\mathcal E(O_{h/2}(\xi_1)(z))^{3/2}+L^{3/2}\mathbb E\,\mathcal E(O_{h/2}(\xi_1)(z))\bigr) \\
&\quad \leq\ \mathfrak Eh^3\Bigr(8L_H\bigr(R_U^{3/2}+3(\gamma hd)^{3/2}\bigr)+L^{3/2}\bigr(R_U+\gamma hd\bigr)\Bigr)\;.
\end{align*}
Inserting $\mathfrak E$ and $R_U$, and using that $\gamma h d=\mathcal{O}(K^{-1}\gamma^2d)$ which holds by \ref{A_h} and $K \le L$, shows that there exists $\overline{h}_2>0$ such that the last display is bounded by $(3e)^{-1}$ for all $h\leq\overline{h}_2$ with
\[
        (\overline{h}_2)^{-1}
\ =\    \widetilde{\mathcal{O}}\Bigr[
        \begin{aligned}[t]
        &L_H^{1/2}
            \begin{aligned}[t]
            &K^{-1/4}\kappa d^{3/4}\log^{3/4}(1/\eps)(L^{1/2}\gamma^{-1})^{-1/2} \\
            &\times\max\bigr((L^{1/2}\gamma^{-1})^{-2},(\kappa d)^{-1}\log\nu(e^{H/8})\bigr)^{3/4}
            \end{aligned} \\
        &+L^{1/2}
            \begin{aligned}[t]
            &\kappa d^{1/2}\log^{1/2}(1/\eps)(L^{1/2}\gamma^{-1})^{-1/2} \\
            &\times\max\bigr((L^{1/2}\gamma^{-1})^{-2},(\kappa d)^{-1}\log\nu(e^{H/8})\bigr)^{1/2}\Bigr]\;.
            \end{aligned}
        \end{aligned}
\]
\end{itemize}
This completes the verification of Assumption~\ref{A_S} {\it(iii)} and {\it(iv)}.  To finish, set $\overline{h}=\min(\overline{h}_1,\overline{h}_2)$ which satisfies, by \ref{A_h},
\[
        (\overline{h})^{-1}
\ =\    \widetilde{\mathcal{O}}\Bigr[
        \begin{aligned}[t]
        &(L^{1/2}\gamma^{-1})^{-1/2}\kappa\log(1/\eps) \\
        &\times\Bigr(
            \begin{aligned}[t]
            &L_H^{1/2}K^{-1/4}d^{3/4}\max\bigr((L^{1/2}\gamma^{-1})^{-2},(\kappa d)^{-1}\log\nu(e^{H/8})\bigr)^{3/4} \\
            &+L^{1/2}d^{1/2}\max\bigr((L^{1/2}\gamma^{-1})^{-2},(\kappa d)^{-1}\log\nu(e^{H/8})\bigr)^{1/2}\Bigr)\Bigr]\;.
            \end{aligned}
        \end{aligned}
\]  Since Assumption \ref{A_S} holds,   Theorem~\ref{thm:MAKLAmix} now follows by invoking Theorem~\ref{thm:mix}.  
\end{proof}

\section{Key Ingredients of the Proof of Theorem~\ref{thm:MAKLAmix}}

\label{sec:MAKLAmix_ingredients}

The ingredients needed in the proof of Theorem~\ref{thm:MAKLAmix} are developed in this section.  

\subsection{Verifying \ref{A_S} {\it(i)}: Contractive Coupling for UKLA}

Here we use a coupling $\Pi^u_{Contr}$ of two copies of UKLA starting from different initial distributions to demonstrate $L^1$-Wasserstein contractivity with respect to a \emph{twisted metric} induced by the \emph{twisted norm} on $\mathbb R^{2d}$
\begin{equation} \label{eq:tw}
\begin{aligned}
 \twnorm{ (x,v) }^2 := \alpha \ |  x|^2 +   \beta \langle  x, v \rangle + | v|^2   ~~   \text{where} ~~ \alpha \,= \, \frac{\beta}{h} \sinh(\frac{\gamma h}{2}) ~~\text{and} ~~ \beta \,=\, \frac{\gamma}{4} \;. 
    \end{aligned}
\end{equation} By using the elementary inequality \begin{equation}  \label{ieq:sinh}
\mathsf{x} \ \le \ \sinh(\mathsf{x}) \ \le \ \frac{6}{5} \mathsf{x} \quad \text{valid for all $\mathsf{x} \in [0,1]$}
,
\end{equation} note that \begin{equation} 
\frac{1}{8} \gamma^2
\ \le \ \alpha \ \le  \ \frac{3}{20} \gamma^2 \;. \label{ieq:alpha}
\end{equation}
Hence, the twisted norm compares to the \emph{untwisted norm} 
\begin{equation} \label{eq:utw}
    \| (x,v) \|^2 \ := \ \gamma^2 | x|^2 + | v|^2
\end{equation}
via
\begin{equation} \label{eq:tw_equiv}
\frac{1}{16} \| (x,v) \|^2 \ \le  \ \| (x,v) \|_{\tw}^2 \ \le \  \frac{17}{16} \| (x,v) \|^2 \;.
\end{equation}

\begin{lemma}\label{lem:OABAO_contr}
Suppose that Assumptions~\ref{A_K},~\ref{A_L}, and~\ref{A_h} hold.
Then, for all $z=(x,v),\tilde{z}= ( \tilde{x}, \tilde{v} ) \in \mathbb{R}^{2d}$ and $\mathsf{a}_1,\mathsf{a}_2 \in \mathbb{R}^d$, it holds that  
\begin{align*}
&    \twnorm{ O_{h/2}(\mathsf{a}_2) \circ \theta_h \circ O_{h/2}(\mathsf{a}_1) (z) - O_{h/2}(\mathsf{a}_2) \circ \theta_h \circ O_{h/2}(\mathsf{a}_1) ( \tilde{z} )}
\\
& \quad \ \le \ (1-  \, c \, h) \ 
\twnorm{ z - \tilde{z} } \quad \text{where} \quad c \, = \, \frac{1}{34 e^{\frac{1}{2}}} \min\left( K \gamma^{-1}, \gamma \right)   \;. 
\end{align*}
\end{lemma}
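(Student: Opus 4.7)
The plan is to use a \emph{synchronous coupling}: drive both copies of UKLA with the same Gaussian noises $\mathsf{a}_1,\mathsf{a}_2$ in the two OU half-steps. Writing $q=x-\tilde x$, $p=v-\tilde v$, and $\eta=e^{-\gamma h/2}$, each OU half-step then acts on the difference as $(q,p)\mapsto(q,\eta p)$, and the fundamental theorem of calculus applied to $\nabla U$ at the Verlet midpoint $w=x+(h/2)v$ yields $\nabla U(w)-\nabla U(\tilde w)=M(w-\tilde w)$ with $M=\int_0^1\nabla^2 U(\tilde w+s(w-\tilde w))\,ds$ symmetric and $K\,I\preceq M\preceq L\,I$ by \ref{A_K} and \ref{A_L}. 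Concatenating the three pieces, the coupled difference satisfies $(q',p')^{\top}=T_M(q,p)^{\top}$ for a single linear map $T_M$ whose $2\times 2$ block form has entries that are polynomial in $\eta$, $h$ and $M$.

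Because $T_M$ is constructed from $I$ and $M$ alone, it commutes with the spectral decomposition of $M$. I would therefore diagonalize $M$ in an orthonormal basis so that both $T_M$ and the quadratic form defining $\twnorm{\cdot}$ decouple across eigendirections. This reduces the statement to a $2\times 2$ matrix inequality, one per eigenvalue $\lambda\in[K,L]$: it suffices to show $T_\lambda^{\top}Q\,T_\lambda\preceq(1-ch)^2 Q$, where $Q=\bigl(\begin{smallmatrix}\alpha & \beta/2 \\ \beta/2 & 1\end{smallmatrix}\bigr)$ encodes the twisted norm via \eqref{eq:tw} and $T_\lambda$ is the scalar $2\times 2$ analogue of $T_M$ with $M$ replaced by $\lambda$.

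The heart of the argument is this matrix inequality. I would expand $T_\lambda^{\top}Q\,T_\lambda-(1-ch)^2 Q$, group terms by powers of $h$ and by $\eta$, and verify negative semidefiniteness. The specific tuning $\alpha=(\beta/h)\sinh(\gamma h/2)$ with $\beta=\gamma/4$ is engineered precisely so that the leading position--velocity cross term produced by the Verlet step is matched by the $\eta^2=e^{-\gamma h}$ contraction arising from the two OU half-steps, with residual off-diagonal pieces controlled through the double inequality \eqref{ieq:sinh}. I expect two competing mechanisms: for $\lambda\geq\gamma^2$, contraction is driven by the $\eta^2$ damping of the velocity direction, contributing a rate of order $\gamma h$; for $\lambda\leq\gamma^2$, contraction comes from the position correction $-(h^2/2)\lambda$ in Verlet, contributing a rate of order $\lambda h/\gamma\geq Kh/\gamma$. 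Taking the worse of the two over $\lambda\in[K,L]$, and using $\gamma h\leq 1$ from \ref{A_h} (which also implies $\eta\geq e^{-1/2}$), should produce exactly $c=\min(K\gamma^{-1},\gamma)/(34\,e^{1/2})$.

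The principal obstacle is bookkeeping rather than any conceptual novelty: tightening the $2\times 2$ inequality to the explicit constant $1/(34\,e^{1/2})$ requires carefully tracking the $O(h^2)$ and $O(h^3)$ cross terms, repeatedly invoking \eqref{ieq:sinh} and \eqref{ieq:alpha} to compare $\alpha$ against $\gamma^2$ and $\beta\gamma$, and handling the two regimes ($\lambda\geq\gamma^2$ versus $\lambda<\gamma^2$) uniformly across $\lambda\in[K,L]$. The appearance of $e^{1/2}$ in the denominator should stem from the worst-case bound $\eta^{-2}\leq e$ when $\gamma h\leq 1$, used to turn bounds on $T_\lambda^{\top}QT_\lambda$ into bounds against $Q$.
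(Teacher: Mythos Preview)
Your approach is correct in outline but genuinely different from the paper's. You linearize the force difference via the mean-value Hessian $M=\int_0^1\nabla^2U(\tilde w+s(w-\tilde w))\,ds$, diagonalize, and reduce to a one-parameter family of $2\times2$ matrix inequalities indexed by $\lambda\in[K,L]$. The paper instead works directly with the vector quantity $\Delta F^{\star}=\nabla U(x^{\star})-\nabla U(\tilde x^{\star})$ and the scalar $\Phi^{\star}=-\langle\Delta F^{\star},\zeta^{\star}\rangle$: it expands $\Delta=\twnorm{Z-\tilde Z}^2-\twnorm{z-\tilde z}^2$, applies Young's inequality to the cross terms $\langle\hat\omega,\Delta F^{\star}\rangle$, and then crucially invokes the \emph{co-coercivity} inequality $|\Delta F^{\star}|^2\le L\,\Phi^{\star}$ (valid for any convex, $L$-smooth $U$) together with $K$-strong convexity $\Phi^{\star}\ge K|\zeta^{\star}|^2$. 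No Hessian or diagonalization appears.

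What each buys: the paper's co-coercivity route is slightly more general, requiring only $U\in C^1$ convex with Lipschitz gradient, whereas your Hessian linearization implicitly needs $U\in C^2$ (harmless here since \ref{A_LH} gives $C^3$, but the lemma as stated assumes only \ref{A_K}, \ref{A_L}, \ref{A_h}). Your spectral reduction is more systematic and mechanizable---once set up, it is a scalar calculus exercise---while the paper's argument bundles the convexity structure into two clean scalar inequalities and avoids any eigenbasis machinery. One small point: under \ref{A_h} we have $L\le\gamma^2/100$, so your regime ``$\lambda\ge\gamma^2$'' is vacuous and the entire argument lives in the ``$\lambda<\gamma^2$'' regime with rate $K\gamma^{-1}$; this does not create a gap, but it does mean the $\min$ with $\gamma$ in the final constant is cosmetic here.
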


Under similar assumptions, variants of Lemma~\ref{lem:OABAO_contr} have beeen proven elsewhere in the literature; see, e.g.,  \cite{monmarche2021high, sanz2021wasserstein, leimkuhler2023contraction}.  For the convenience of the reader, however, a complete proof is given below.  As emphasized in previous works, a key ingredient in the proof  of Lemma~\ref{lem:OABAO_contr}  is the \emph{co-coercivity} property of $\nabla U$, which in terms of the potential force $F$ can be written as \begin{equation} \label{eq:cocoercive}
|F(x_1) - F(x_2)|^2 \le - L \, \langle F(x_1) - F(x_2),  x_1-x_2 \rangle \quad \text{for all $x_1, x_2 \in \mathbb{R}^d$} \;.   
\end{equation}
This holds if $U$ is continuously differentiable, convex, and $L$-gradient Lipschitz \cite[Theorem 2.1.10]{nesterov2018lectures}.  
Additionally, the following elementary inequality is used 
\begin{align}
e^{-\mathsf{x}} & \le 1 - \frac{\mathsf{x}}{2} \qquad \text{valid for all $\mathsf{x} \in [0,1] $.}  \label{ieq::exp} 
\end{align}

\begin{proof}
It is notationally convenient to define  \begin{align*} 
& 
Z \, := \,  O_{h/2}(\mathsf{a}_2) \circ \tilde{\theta}_h \circ O_{h/2}(\mathsf{a}_1) (z) \;, \quad 
\tilde{Z} := O_{h/2}(\mathsf{a}_2) \circ \tilde{\theta}_h \circ O_{h/2}(\mathsf{a}_1) ( \tilde{z}) \,,  \\
& \Delta \, := \,  \| Z - \tilde{Z} \|_{\tw}^2 - \| z - \tilde{z} \|_{\tw}^2 \;, \quad  \zeta \, := \, x - \tilde{x} \;, \quad \omega \, := \, v - \tilde{v} \;, \\
& v_O \, := \, e^{-\frac{\gamma h}{2}} \ v + \sqrt{1-e^{-\gamma h}} \mathsf{a}_1 \;, \quad \tilde{v}_O \, := \,e^{-\frac{\gamma h}{2}} \ \tilde{v} +  \sqrt{1-e^{-\gamma h}} \mathsf{a}_1 \;, \quad   \hat{\omega} \, := \, e^{-\frac{\gamma h}{2}} \ \omega   \;, \quad \\
& \zeta^{\star} \, := \,  \zeta + \frac{h}{2} \hat{\omega} \;, ~~  \Delta F^{\star} \,  := \, F(x+ \frac{h}{2} v_O) - F(\tilde{x}+\frac{h}{2} \tilde{v}_O) \;,  ~~\text{and} ~~ \Phi^{\star} := - \langle \Delta F^{\star} , \zeta^{\star} \rangle   \;.   
\end{align*}  
By definition of the OABAO scheme in \eqref{eq:OABAO}, \[ Z- \tilde{Z}\ =\ \big(  \zeta + h \hat{\omega} + \frac{h^2}{2} \Delta F^{\star} , ~e^{-\frac{\gamma  h}{2}  } ( \hat{\omega} + h \Delta F^{\star} ) \big) \;. \] Inserting this difference into $\Delta$ yields \begin{equation} \label{eq:delta}
\begin{aligned}
 \Delta  \ =  \     \begin{aligned}[t]
                    &\alpha \left( - h^2 \Phi^{\star} + h^2 |\hat{\omega}|^2 + \frac{h^3}{2} \langle \hat{\omega}, \Delta F^{\star} \rangle + \frac{h^4}{4} |\Delta F^{\star} |^2 \right)   \\ 
                    &+ \beta e^{-\frac{\gamma h}{2}} \left(- h \Phi^{\star} + h  |\hat{\omega}|^2  + h^2  \langle \hat{\omega}, \Delta F^{\star} \rangle + \frac{h^3}{2}  |\Delta F^{\star}|^2 \right) \\
                    &+ e^{- \gamma h } \left( \vphantom{ \frac{h^3}{2}} (e^{- \gamma h } - e^{\gamma h} ) |\omega|^2 + 2 h  \langle \hat{\omega}, \Delta F^{\star} \rangle + h^2  |\Delta F^{\star}|^2 \right) \;.
                    \end{aligned}
\end{aligned}  
\end{equation}
Note that the cross-terms involving $\langle \zeta, \omega \rangle$ vanish by definition of $\alpha$ in \eqref{eq:tw}.  Applying Young's inequality with parameters $\delta_1,\delta_2,\delta_3>0$  yields \begin{equation} \label{ieq:Delta_1}
\begin{aligned}
 \Delta \ \le \     \begin{aligned}[t]
                    &\alpha \left( - h^2 \Phi^{\star} + \left( h^2 + \frac{h^3}{4  \delta_1}  \right)  |\hat{\omega}|^2 +  \left( \frac{h^3 \delta_1}{4}  + \frac{h^4}{4} \right) |\Delta F^{\star} |^2 \right) \\ 
                    &+ \beta e^{-\frac{\gamma h}{2}} \left(- h \Phi^{\star} + \left(  h + \frac{ h^2}{2 \delta_2} \right)  |\hat{\omega}|^2   + \left( \frac{ h^2  \delta_2}{2 } + \frac{h^3}{2} \right)  |\Delta F^{\star}|^2 \right) \\
                    &+ e^{- \gamma h } \left( \vphantom{ \frac{h^3}{2}} (e^{- \gamma h } - e^{\gamma h} ) |\omega|^2 +  \frac{h}{\delta_3} |\hat{\omega}|^2 + \left( h \delta_3 + h^2 \right) |\Delta F^{\star}|^2 \right) \;.
                    \end{aligned}
\end{aligned} 
\end{equation}

By the co-coercivity property in  \eqref{eq:cocoercive} evaluated at $x_1=x+ \frac{h}{2} v_O$ and $x_2 = \tilde{x}+ \frac{h}{2} \tilde{v}_O$, it holds that  $|\Delta F^{\star}|^2 \le L \, \Phi^{\star}$.  Inserting this bound into \eqref{ieq:Delta_1} yields
\begin{align}
 \Delta \ &\le \ \rn{1} + \rn{2} + \rn{3} + \rn{4}\;,  \quad \text{where} \label{ieq:delta} \\
 \rn{1} \ &:= \   - \frac{1}{2} \beta h e^{-\frac{\gamma h}{2}} \Phi^{\star} -  \sinh(\gamma h) |\hat{\omega}|^2\;,  \nonumber  \\ 
 \rn{2} \ &:= \   \alpha  \left( \frac{L h^4}{4}  +   \frac{L h^3 \delta_1}{4}  - h^2 \right)  \Phi^{\star}\;,  \nonumber \\
  \rn{3}  \ &:= \ \left(  \frac{\beta e^{-\frac{\gamma h}{2}}}{2}  \left(   L h^3 + L h^2  \delta_2  \right) + e^{-\gamma h} L (h^2 + h \delta_3)   - \frac{1}{2} \beta h e^{-\gamma h/2} \right) \Phi^{\star}\;,  \nonumber \\
   \rn{4} \ &:= \ \left( \alpha \left( h^2  + \frac{h^3}{4 \delta_1} \right) + \beta e^{-\frac{\gamma h}{2}} \left(  h + \frac{h^2 }{2 \delta_2} \right) + e^{-\gamma h} \frac{h}{\delta_3}   - \sinh(\gamma h) \right) |\hat{\omega}|^2\;.    \nonumber 
\end{align} 
Below in \eqref{ieq:rn2}-\eqref{ieq:rn4}, we show that $\rn{2}$-$\rn{4}$ are non-positive, and hence, $\Delta \le \rn{1}$.  In particular, choosing $\delta_1 = h$ yields \begin{align} \label{ieq:rn2}
\rn{2}\ =\ \alpha  h^2 \left(  L h^2 \left( \frac{1}{4} + \frac{\delta_1}{2 h} \right)  - 1 \right) \Phi^{\star} \ \overset{\ref{A_h}}{\le}\ \alpha  h^2 \left(  \frac{1}{6} \left( \frac{1}{4} + \frac{1}{2} \right)  - 1 \right) \Phi^{\star} \ \le\ 0  \;.
\end{align}
Choosing $\delta_2 = 2 h $, $\delta_3 = 2 \gamma^{-1}$, and by definition of $\alpha$ and $\beta$ in \eqref{eq:tw}, \begin{align} 
   &  \rn{3}  \ \le \  \frac{\gamma h }{4} e^{-\gamma h/2} \left(\frac{3}{2}  L h^2 + 4 L \gamma^{-2} ( \gamma h + 2) - \frac{1}{2} \right) \Phi^{\star} \nonumber \\
    & \overset{\ref{A_h}}{\le} \, \frac{\gamma h }{4} e^{-\gamma h/2} \left( L \gamma^{-2} \left( \frac{3}{2} + 12 \right)  - \frac{1}{2} \right) \Phi^{\star}  \, \overset{\ref{A_h}}{\le} \, \frac{\gamma h }{4} e^{-\gamma h/2} \left( \frac{1}{36} \frac{27}{2}  - \frac{1}{2} \right) \Phi^{\star} \: \le \: 0 \;, \label{ieq:rn3}
\end{align}
as well as
\begin{align} 
   &  \rn{4} \  \overset{\eqref{ieq:alpha}}{\le} \  \gamma h   \left(\frac{3}{20} \gamma h \left(1 + \frac{h}{4 \delta_1} \right) + \frac{1}{4} \left( 1 + \frac{h}{2 \delta_2} \right) + \frac{\gamma^{-1}}{\delta_3} -1 \right) |\hat{\omega}|^2  \nonumber \\
  & \le\ \gamma h   \left(\frac{3}{20} \gamma h \left(1 + \frac{1}{4} \right) + \frac{1}{4} \left( 1 + \frac{1}{4} \right) -\frac{1}{2} \right) |\hat{\omega}|^2 \ \overset{\ref{A_h}}{\le} \ 0   \;. \label{ieq:rn4}
\end{align}

\medskip

Combining the above, and by definition of  $\beta$ in \eqref{eq:tw}, \begin{align}
\Delta \ \le \ \rn{1}
& \overset{ \ref{A_K}}{\le}   - \frac{1}{2} \beta h e^{-\frac{\gamma h}{2}} K |\zeta^{\star}|^2 -  \sinh(\gamma h) e^{-\gamma h} |\omega|^2  \nonumber \\  
& \overset{\eqref{ieq::exp}}{\le}  -\min\left(  K \gamma^{-1} h e^{-\frac{\gamma h}{2}}, \gamma h \right)   ( \frac{1}{8} \gamma^2 |\zeta^{\star}|^2 +  \frac{1}{2} |\omega|^2 )  \nonumber \\
& \ \le \   -\min\left(  K \gamma^{-1} h e^{-\frac{\gamma h}{2}}, \gamma h \right)   ( \frac{1}{16} \gamma^2 |\zeta|^2 +  \left( \frac{1}{2} - \frac{1}{16} \gamma^2 h^2 \right) |\omega|^2 )  \nonumber \\
& \overset{\ref{A_h}}{\le} - \frac{1}{16} e^{-1/2} \min\left(  K \gamma^{-1} h , \gamma h \right) ( \gamma^2 |\zeta|^2 +  |\omega|^2 )  \nonumber \\
& \overset{\eqref{eq:tw_equiv}}{\le} -  \frac{1}{17} e^{-1/2} \min\left(  K \gamma^{-1} , \gamma \right)  h \twnorm{(\zeta,\omega)}^2  \;.     \label{ieq:rn1} 
\end{align}
Thus, $ \twnorm{ Z - \tilde{Z} } \le (1-   c \, h) \twnorm{ z - \tilde{z} } $ with $c=(1/34) e^{-1/2} \min\left(  K \gamma^{-1} , \gamma \right) $ --- as required.
\end{proof}

\subsection{Verifying \ref{A_S} {\it(ii)}: One-shot Coupling for UKLA}

By using a one-shot coupling $\Pi^u_{Reg}$, cf. \cite{roberts2002one,madras2010quantitative,EbMa2019,monmarche2021high, BouRabeeEberle2023}, the next lemma proves that the transition kernel of UKLA has a regularizing effect.  Throughout this section, for any $z=(x,v),\tilde{z}=(\tilde{x}, \tilde{v}) \in \mathbb{R}^{2d}$, $\mathsf{a}_1,\mathsf{a}_2 \in \mathbb{R}^d$, let $\Phi: \mathbb{R}^{2d} \to \mathbb{R}^{2d}$ be the \emph{one-shot} map $\Phi: (\mathsf{a}_1,\mathsf{a}_2) \mapsto (\tilde{\mathsf{a}}_1, \tilde{\mathsf{a}}_2)$ implicitly defined by  \begin{equation} \label{eq:Phi}
 O_{h/2}(\mathsf{a}_{2}) \circ \theta_h \circ O_{h/2}(\mathsf{a}_{1})(z) = 
O_{h/2}(\tilde{\mathsf{a}}_{2}) \circ \theta_h \circ O_{h/2}(\tilde{\mathsf{a}}_{1})(\tilde{z}) \;.
\end{equation}

\begin{lemma}  \label{lem:overlap_pukLa}
Suppose Assumptions~\ref{A_L},~\ref{A_LH}, and~\ref{A_h} hold.  Let $\xi_1, \xi_2 \sim \mathcal{N}(0,I_d)$ be independent.  Then, for all $z, \tilde{z} \in \mathbb{R}^{2d}$, it holds that \begin{align} 
\| \delta_z \Pi^u_{Reg} - \delta_{\tilde{z}} \Pi^u_{Reg} \|_{\TV} \ &\le \ 
\| \law(\xi_1,\xi_2) - \law(\Phi(\xi_1,\xi_2))\|_{\TV} \label{tvbound1} \\
 &\le \ \frac{7}{2} \, \left( \frac{1}{(\gamma h)^{3/2}} + d^{1/2} \frac{L_H h^3}{ \gamma h}  \right) \,  \| z - \tilde{z} \|\;.\label{tvbound2} 
\end{align}
\end{lemma}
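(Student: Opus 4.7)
The proof factors into two largely independent tasks: constructing the one-shot coupling (for \eqref{tvbound1}) and evaluating the total variation distance between a standard Gaussian and its pushforward under $\Phi$ (for \eqref{tvbound2}).

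For \eqref{tvbound1}, the strategy is to exhibit $\Pi^u_{Reg}$ concretely as follows. Drive the $z$-chain by independent Gaussians $(\xi_1,\xi_2)\sim\mathcal N(0,I_d)^{\otimes 2}$, and drive the $\tilde z$-chain by noise $(\tilde\xi_1,\tilde\xi_2)$ obtained from a maximal coupling between $\law(\xi_1,\xi_2)$ and $\law(\Phi(\xi_1,\xi_2))$. By the very definition of $\Phi$ in \eqref{eq:Phi}, on the maximal-coupling success event $\{(\tilde\xi_1,\tilde\xi_2)=\Phi(\xi_1,\xi_2)\}$ the two chains produce identical outputs; consequently the non-coupling probability of the chains equals the TV distance between the noise laws. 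Faithfulness is immediate because $\Phi$ reduces to the identity when $z=\tilde z$.

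For \eqref{tvbound2}, the first step is to make $\Phi$ explicit. Writing out the OABAO flows in \eqref{eq:Phi} and equating positions yields the implicit equation
\[ \tilde v_O\ =\ v_O\ +\ \frac{x-\tilde x}{h}\ +\ \frac{h}{2}\bigl[\nabla U(\tilde x+\tfrac{h}{2}\tilde v_O)-\nabla U(x+\tfrac{h}{2}v_O)\bigr]\;, \]
which has a unique solution by a Banach fixed-point argument since $L(h/2)^2\le 1$ under \ref{A_h}. Solving the velocity equality then determines $\tilde{\mathsf a}_2$. This exhibits $\Phi$ as the block-triangular map $\Phi(\xi_1,\xi_2)=(\phi_1(\xi_1),\,\xi_2+f(\xi_1))$ with $\phi_1(\xi_1)=\xi_1+c_1+r_1(\xi_1)$ and $f(\xi_1)=c_2+r_2(\xi_1)$, for deterministic shifts $c_1,c_2$ depending only on $z,\tilde z$ and remainders $r_1,r_2$ arising from the $\nabla U$-difference.

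With this in hand, the TV bound is assembled via a two-stage triangle inequality exploiting the shear structure. First, factor $\Phi$ through the measure-preserving shear $(\xi_1,\xi_2)\mapsto(\xi_1,\xi_2+f(\xi_1))$ to obtain
\[ \text{TV}(\law(\xi),\law(\Phi(\xi)))\ \leq\ \mathbb E\,\text{TV}\bigl(\mathcal N(0,I_d),\mathcal N(f(\xi_1),I_d)\bigr)+\text{TV}\bigl(\mathcal N(0,I_d),\law(\phi_1(\xi_1))\bigr)\;. \]
Control the first summand by $\tfrac12\mathbb E|f(\xi_1)|$ via the Gaussian-shift bound; split the second via the intermediate $\mathcal N(c_1,I_d)$ to produce $\tfrac12|c_1|$ plus a residual TV driven by $\xi_1\mapsto\xi_1+r_1(\xi_1)$. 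The deterministic shifts scale like $|c_1|,|c_2|\lesssim (\gamma h)^{-1/2}h^{-1}\|z-\tilde z\|\ =\ \|z-\tilde z\|/(\gamma h)^{3/2}$ using \eqref{ieq:sinh} and the norm equivalence \eqref{eq:tw_equiv}. For the remainders, Taylor-expand the $\nabla U$-difference beyond the leading $L$-Lipschitz order and invoke \ref{A_LH} to obtain $|r_i(\xi_1)|\lesssim (\gamma h)^{-1/2}h^2 L_H\,\|z-\tilde z\|\,|v_O-\overline v_O|$ for a suitable reference $\overline v_O$; taking expectation contributes a factor $\mathbb E|v_O-\overline v_O|\propto d^{1/2}$ from the Gaussian $\xi_1$, yielding the claimed $L_H d^{1/2}h^3/(\gamma h)\cdot\|z-\tilde z\|$.

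The principal obstacle is the remainder bookkeeping. Producing the precise $d^{1/2}$ factor demands pairing the Hessian-Lipschitz estimate with the expected magnitude of the $\xi_1$-fluctuation of $v_O$, which in turn requires expanding $\nabla U$ around a carefully chosen reference point so that the leading ($L$-Lipschitz) contribution lands in $c_1,c_2$ rather than $r_1,r_2$. A secondary difficulty is the residual TV $\text{TV}(\mathcal N(0,I_d),\law(\xi_1+r_1(\xi_1)))$: since the naive pointwise coupling never coincides, one must instead estimate the Radon-Nikodym derivative $p(\xi)/p(\Psi(\xi))|\det(I+\nabla r_1(\xi))|^{-1}$ to first order (or equivalently run a reflection coupling in the direction of $r_1$) to bound this TV by $\mathbb E|r_1(\xi_1)|$ up to constants.
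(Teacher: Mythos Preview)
Your argument for \eqref{tvbound1} via a maximal coupling of the driving noise is the same as the paper's, just phrased in coupling language.

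For \eqref{tvbound2} your route is genuinely different. The paper does not decompose $\Phi$ at all: it applies a single overlap lemma stating that for $\xi\sim\mathcal N(0,I_{2d})$ and a diffeomorphism $F$,
\[
\|\law(\xi)-\law(F(\xi))\|_{\TV}\ \le\ \tfrac12\sqrt{\,\mathbb E\bigl[|F(\xi)-\xi|^2+2\tr(DF(\xi)-I)-2\log|\det DF(\xi)|\bigr]}\;,
\]
and then bounds the two pieces separately: $|\Phi-\mathrm{id}|^2\le 44(\gamma h)^{-3}\|z-\tilde z\|^2$, and $\tr(D\Phi-I)-\log|\det D\Phi|\le 2d(L_Hh^3/\gamma h)^2\|z-\tilde z\|^2$ via the block-triangular structure and the Hessian-Lipschitz assumption. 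The $d^{1/2}$ factor enters through $\|D\phi_1-I_d\|_F\le d^{1/2}\|D\phi_1-I_d\|_{\op}$, not through $\mathbb E|\xi_1|$.

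Your two-stage triangle inequality and $c_i+r_i$ split are valid, but there is a real gap at the last step, the claim that $\|\mathcal N(0,I_d)-\law(\xi_1+r_1(\xi_1))\|_{\TV}\lesssim \mathbb E|r_1(\xi_1)|$. This inequality is false in general: in $d=1$, take $r_1(\xi)=a\sin(\xi/\epsilon)$ with $0<a\ll\epsilon\ll1$; then $\mathbb E|r_1|\sim a$, yet the pushforward density oscillates with relative amplitude $\sim a/\epsilon$ and the TV is $\sim a/\epsilon\gg\mathbb E|r_1|$. Your suggested ``first-order'' Radon--Nikodym expansion gives $-\xi\cdot r_1(\xi)+\tr(Dr_1(\xi))$, which has mean zero by Stein's identity; the TV is governed by the \emph{second-order} terms $|r_1|^2$ and $\tr(Dr_1)-\log\det(I+Dr_1)$, i.e.\ exactly the overlap lemma above. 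Reflection coupling does not help either, since the shift $r_1(\xi_1)$ depends on the Gaussian being reflected. To close the argument you must separately bound $\|Dr_1\|_{\op}=\|D\phi_1-I_d\|_{\op}$, which is precisely the paper's Jacobian estimate; once that is done, your block-triangular decomposition and the $c_1/r_1$ split become unnecessary overhead, and you have reproduced the paper's proof.
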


A closely related one-shot coupling result has recently been developed for an ``OBABO'' discretization of kinetic Langevin dynamics; see Proposition 3 and Proposition 22 of \cite{monmarche2021high}, and for extensions see \cite{
monmarche2022entropic,camrud2023second}.  Although the upper bound in \eqref{tvbound2} degenerates as $h \searrow 0$, this degeneration manifests only logarithmically in the mixing time results for MAKLA; see Assumption~\ref{A_S} {\it(iii)}.  

\begin{proof}
Since the map $(\xi_1, \xi_2) \mapsto O_{h/2}(\xi_{2}) \circ \theta_h \circ O_{h/2}(\xi_{1})(z)$ is deterministic and measurable \cite[Lemma 3]{madras2010quantitative}, \begin{align*}
\| \delta_z \Pi^u_{Reg} - \delta_{\tilde{z}} \Pi^u_{Reg} \|_{\TV} \ \le \ \| \law(\xi_1,\xi_2)-\law(\Phi(\xi_1,\xi_2)) \|_{\TV} \;,
\end{align*} 
which gives \eqref{tvbound1}.   Inserting Lemmas~\ref{lem:overlap},~\ref{lem:oneshot:a}, and~\ref{lem:oneshot:b} into \eqref{tvbound1} gives \eqref{tvbound2}. 
\end{proof}

As already indicated, the following lemmas are used in the proof of Lemma~\ref{lem:overlap_pukLa}.

\begin{lemma} \label{lem:overlap}
Let $\xi \sim \mathcal{N}(0,\Sigma)$ where $\Sigma$ is an $n \times n$ matrix and suppose that $\mathsf{F}: \mathbb{R}^{n} \to \mathbb{R}^{n}$ is an invertible and differentiable map. Then
\begin{equation} \label{tvbound}
\begin{aligned}
& \| \law(\xi) - \law(\mathsf{F}(\xi))  \|_{\TV} \ \le \ \\
& \qquad \frac{1}{2} 
\sqrt{ \mathbb E\Bigr[ |\Sigma^{-\frac{1}{2}} (\mathsf{F}(\xi)-\xi)|^2 + 2 \tr(  D \mathsf{F}(\xi)-I_n ) - 2 \log|\det D \mathsf{F}(\xi)| \Bigr] } \;.
\end{aligned}
\end{equation}
\end{lemma}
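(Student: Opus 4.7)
The plan is to invoke Pinsker's inequality to bound the total variation distance by the square root of a relative entropy, compute that relative entropy explicitly via the change-of-variables formula for densities, and then recognize the resulting cross term through a Gaussian integration-by-parts identity.

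Concretely, since $\mathsf{F}$ is invertible and differentiable, the pushforward $\law(\mathsf{F}(\xi))$ has density $q(y) = p(\mathsf{F}^{-1}(y))|\det D\mathsf{F}^{-1}(y)|$, where $p$ is the $\mathcal{N}(0,\Sigma)$ density. Substituting $y = \mathsf{F}(x)$ in the defining integral of $\mathrm{KL}(\law(\mathsf{F}(\xi)) \,\|\, \law(\xi))$ collapses the Jacobian factors and yields
\[
\mathrm{KL}\bigl(\law(\mathsf{F}(\xi)) \,\|\, \law(\xi)\bigr) \ = \ \mathbb E\Bigr[\tfrac{1}{2}\bigr(\mathsf{F}(\xi)^T \Sigma^{-1} \mathsf{F}(\xi) - \xi^T \Sigma^{-1} \xi\bigr) - \log|\det D\mathsf{F}(\xi)|\Bigr].
\]
Expanding the quadratic difference as
\[
\mathsf{F}(\xi)^T \Sigma^{-1} \mathsf{F}(\xi) - \xi^T \Sigma^{-1} \xi \ = \ |\Sigma^{-1/2}(\mathsf{F}(\xi)-\xi)|^2 + 2(\mathsf{F}(\xi)-\xi)^T \Sigma^{-1}\xi
\]
matches the first summand on the right-hand side of \eqref{tvbound}, and leaves a linear-in-$\xi$ cross term to be handled.

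To dispense with the cross term, I would invoke Gaussian integration by parts in the form: for any sufficiently regular $g:\mathbb R^n\to\mathbb R^n$, $\mathbb E[g(\xi)^T \Sigma^{-1}\xi] = \mathbb E[\tr(Dg(\xi))]$. This reduces to the standard scalar Stein identity via the whitening $y = \Sigma^{-1/2}\xi \sim \mathcal N(0,I_n)$, where $g(\xi)^T\Sigma^{-1}\xi = \tilde g(y)^T y$ with $\tilde g(y) = \Sigma^{-1/2} g(\Sigma^{1/2}y)$, and the cyclic property of trace gives $\tr(D\tilde g(y)) = \tr(Dg(\xi))$. Applying this with $g(x) = \mathsf{F}(x) - x$ produces $\mathbb E[(\mathsf{F}(\xi)-\xi)^T\Sigma^{-1}\xi] = \mathbb E[\tr(D\mathsf{F}(\xi)-I_n)]$, which is precisely the trace correction in \eqref{tvbound}. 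Combining with Pinsker's inequality $\|\mu - \nu\|_{\TV} \le \sqrt{\tfrac{1}{2}\mathrm{KL}(\nu\,\|\,\mu)}$ and pulling $1/2$ out of the square root yields the stated bound.

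The main obstacle I anticipate is justifying the integration-by-parts step under just the stated hypotheses of invertibility and differentiability: strictly speaking, one needs enough growth control on $\mathsf{F}-\mathrm{id}$ and $D\mathsf{F}$ (e.g.~at most polynomial growth) so that the Gaussian tails absorb all boundary terms. In the target application—the one-shot map $\Phi$ of \eqref{eq:Phi}, which is a smooth composition of affine maps and the splitting flow $\theta_h$—these integrability conditions are readily verified, so a brief remark or a truncation-then-limit argument should suffice. A minor additional point is ensuring the quantity under the square root is nonnegative, which is automatic since it equals $2\,\mathrm{KL}(\law(\mathsf{F}(\xi))\,\|\,\law(\xi))\ge 0$.
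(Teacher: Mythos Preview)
Your argument is correct and is precisely the standard route to this bound via Pinsker, the change-of-variables formula for the pushforward density, and the Gaussian Stein identity. The paper itself omits the proof, referring instead to Lemma~15 of \cite{BouRabeeEberle2023} (the case $\Sigma=I_n$) and noting that the extension to general $\Sigma$ is a small modification---which is exactly the whitening step you carry out; your caveat about the growth/integrability needed to justify the integration by parts is also apt.
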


\begin{proof}[Proof of Lemma~\ref{lem:overlap}]
The proof of this result is an extension of the  proof of Lemma~15 in \cite{BouRabeeEberle2023} to the case where the covariance matrix of the reference Gaussian measure is $\Sigma$.  Since this is a small modification, a proof is omitted.  
\end{proof}

\begin{lemma}\label{lem:oneshot:a}
Suppose Assumptions~\ref{A_L}, ~\ref{A_LH}, and~\ref{A_h} hold.  Then, for any $z=(x,v),\tilde{z}=(\tilde{x}, \tilde{v})\in \mathbb{R}^{2d}$ and $\mathsf{a}_1,\mathsf{a}_2 \in \mathbb{R}^d$, \begin{equation} \label{Phiv}
| \Phi(\mathsf{a}_1, \mathsf{a}_2)-(\mathsf{a}_1, \mathsf{a}_2) |^2 \ \le \  \frac{44}{\gamma^3 h^3} \ \|z-\tilde{z} \|^2  \;.
\end{equation}
\end{lemma}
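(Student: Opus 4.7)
My plan is to unfold the defining relation \eqref{eq:Phi} through the three splitting maps $O_{h/2}\circ\theta_h\circ O_{h/2}$ and then to read off explicit formulas for the differences $\tilde{\mathsf{a}}_1-\mathsf{a}_1$ and $\tilde{\mathsf{a}}_2-\mathsf{a}_2$. Writing $v_O = e^{-\gamma h/2}v + (1-e^{-\gamma h})^{1/2}\mathsf{a}_1$ and $\tilde v_O = e^{-\gamma h/2}\tilde v + (1-e^{-\gamma h})^{1/2}\tilde{\mathsf{a}}_1$ for the post-OU velocities, applying $\theta_h$ from \eqref{eq:thetah} and then the second OU step, and comparing both sides of \eqref{eq:Phi}, the $x$-component depends only on $\mathsf{a}_1$ (via $v_O$) and yields an equation of the form
\[ h(v_O - \tilde v_O) \;=\; -(x-\tilde x) + \tfrac{h^2}{2}\bigl(\nabla U(x^{\star}) - \nabla U(\tilde x^{\star})\bigr), \qquad x^{\star} = x + \tfrac{h}{2}v_O,\;\tilde x^{\star} = \tilde x + \tfrac{h}{2}\tilde v_O. \]
The $v$-component then isolates $\tilde{\mathsf{a}}_2 - \mathsf{a}_2$ in terms of the same quantities.

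The second step is to control the implicit gradient difference. Substituting the identity for $v_O - \tilde v_O$ back into the definition of $x^{\star} - \tilde x^{\star}$ yields
\[ x^{\star} - \tilde x^{\star} \;=\; \tfrac{1}{2}(x-\tilde x) + \tfrac{h^2}{4}\bigl(\nabla U(x^{\star}) - \nabla U(\tilde x^{\star})\bigr). \]
Using $L$-Lipschitz continuity of $\nabla U$ (Assumption \ref{A_L}) and the smallness $Lh^{2} \le (L^{1/2}\gamma^{-1})^{2}(\gamma h)^{2}\le 1/100$ from Assumption \ref{A_h}, this closed-form identity can be solved to give $|x^{\star}-\tilde x^{\star}|\le C|x-\tilde x|$ with $C$ slightly larger than $1/2$, and therefore $|\nabla U(x^{\star})-\nabla U(\tilde x^{\star})|\le CL|x-\tilde x|$.

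Next I isolate the differences of the noise variables. Solving the first OU relation gives
\[ (1-e^{-\gamma h})^{1/2}(\tilde{\mathsf{a}}_1 - \mathsf{a}_1) \;=\; e^{-\gamma h/2}(v-\tilde v) + \tfrac{1}{h}(x-\tilde x) - \tfrac{h}{2}\bigl(\nabla U(x^{\star})-\nabla U(\tilde x^{\star})\bigr), \]
and plugging the expression for $v_O-\tilde v_O$ into the second OU relation gives
\[ (1-e^{-\gamma h})^{1/2}(\tilde{\mathsf{a}}_2-\mathsf{a}_2) \;=\; e^{-\gamma h/2}\bigl[-\tfrac{1}{h}(x-\tilde x) - \tfrac{h}{2}\bigl(\nabla U(x^{\star})-\nabla U(\tilde x^{\star})\bigr)\bigr]. \]
Using $1-e^{-\gamma h}\ge \gamma h/2$ for $\gamma h\le 1$ to extract the denominator, together with the triangle and Young's inequalities and the gradient bound from the previous step, produces bounds of the form $|\tilde{\mathsf{a}}_i-\mathsf{a}_i|^{2}\le \tfrac{C_1}{\gamma h^{3}}|x-\tilde x|^{2} + \tfrac{C_2}{\gamma h}|v-\tilde v|^{2}$.

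Finally, I convert to the untwisted norm \eqref{eq:utw} via $|x-\tilde x|^{2}=\gamma^{-2}(\gamma^{2}|x-\tilde x|^{2})\le \gamma^{-2}\|z-\tilde z\|^{2}$ and $|v-\tilde v|^{2}\le \|z-\tilde z\|^{2}$. Using $\gamma h\le 1$ once more to bound $\tfrac{1}{\gamma h}\le \tfrac{\gamma^{2}h^{2}}{\gamma^{3}h^{3}}\le \tfrac{1}{\gamma^{3}h^{3}}$, both contributions collapse into a single $\gamma^{-3}h^{-3}$ factor, and tracking constants carefully yields the target bound $44/(\gamma^{3}h^{3})$. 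The main obstacle is the implicit gradient term in Step~2: without Assumption~\ref{A_h} providing the smallness of $Lh^{2}$, one cannot close the fixed-point estimate for $|x^{\star}-\tilde x^{\star}|$, so everything downstream depends on that step. The rest is careful but routine bookkeeping of constants and the two elementary inequalities $1-e^{-\gamma h}\ge \gamma h/2$ and $\gamma h\le 1$.
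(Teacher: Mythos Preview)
Your proposal is correct and follows a genuinely different (and arguably cleaner) route than the paper. The paper expands $\zeta^{\star}=x^{\star}-\tilde x^{\star}$ directly as $\zeta + \tfrac{h}{2}e^{-\gamma h/2}\omega + \tfrac{h}{2}(1-e^{-\gamma h})^{1/2}(\mathsf{a}_1-\tilde{\mathsf{a}}_1)$, inserts this into the formula for $|\mathsf{a}_1-\tilde{\mathsf{a}}_1|^2$, and closes a fixed-point at the level of $|\mathsf{a}_1-\tilde{\mathsf{a}}_1|^2$ (the term $\tfrac{9}{16}L^2h^4|\mathsf{a}_1-\tilde{\mathsf{a}}_1|^2$ is absorbed using $Lh^2$ small). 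Only afterwards does the paper substitute back to obtain a clean bound $|\zeta^{\star}|^2\le 8\gamma^{-2}\|z-\tilde z\|^2$, which is then reused for $|\mathsf{a}_2-\tilde{\mathsf{a}}_2|^2$. You instead substitute the $x$-component identity $h(v_O-\tilde v_O)=-(\,x-\tilde x\,)+\tfrac{h^2}{2}\Delta F^{\star}$ back into the definition of $\zeta^{\star}$ first, obtaining the closed relation $\zeta^{\star}=\tfrac12\zeta+\tfrac{h^2}{4}\Delta F^{\star}$; this removes any reference to $\omega$ or to $\mathsf{a}_1-\tilde{\mathsf{a}}_1$ and lets you close the fixed-point at the level of $|\zeta^{\star}|$ itself. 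Both arguments rely on exactly the same smallness $Lh^2\ll 1$ from Assumption~\ref{A_h}, but yours isolates the implicit dependence earlier and yields a noticeably smaller constant (roughly $10$ rather than $44$) with less bookkeeping. Neither proof actually uses Assumption~\ref{A_LH}, which appears only for symmetry with the companion Lemma~\ref{lem:oneshot:b}.
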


\begin{lemma}\label{lem:oneshot:b}
Suppose Assumptions~\ref{A_L}, ~\ref{A_LH}, and~\ref{A_h} hold. Then, for any $z=(x,v),\tilde{z}=(\tilde{x}, \tilde{v})\in \mathbb{R}^{2d}$ and $\mathsf{a}_1,\mathsf{a}_2 \in \mathbb{R}^d$, \begin{equation} \label{DPhiv}
\tr( D \Phi(\mathsf{a}_1,\mathsf{a}_2 ) - I_{2d} )  -  \log |\det D\Phi(\mathsf{a}_1,\mathsf{a}_2 )|  \ \le \  2 d \frac{(L_H h^3)^2}{\gamma^2 h^2}    \|z-\tilde{z} \|^2  \;.   
\end{equation}
\end{lemma}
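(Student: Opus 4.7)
The plan is to differentiate the defining relation \eqref{eq:Phi} implicitly, reduce the $2d$-dimensional trace--log-determinant quantity to a $d$-dimensional one by exploiting the block structure of the OABAO differential, and then bound the reduced quantity by a squared Frobenius norm. Write $G(z, \mathsf{a}_1, \mathsf{a}_2) = O_{h/2}(\mathsf{a}_2) \circ \theta_h \circ O_{h/2}(\mathsf{a}_1)(z)$. Differentiating the identity $G(z,\mathsf{a}) = G(\tilde z, \Phi(\mathsf{a}))$ in $(\mathsf{a}_1,\mathsf{a}_2)$ gives $D\Phi = [D_{\tilde{\mathsf{a}}} G(\tilde z, \Phi)]^{-1} D_{\mathsf{a}} G(z,\mathsf{a})$. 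Since $G$ depends on $\mathsf{a}_1$ only through $w = e^{-\gamma h/2}v + \sigma\mathsf{a}_1$ (with $\sigma = (1-e^{-\gamma h})^{1/2}$) and on $\mathsf{a}_2$ only through the final translation, a direct calculation yields $D_{\mathsf{a}} G(z, \mathsf{a}) = \sigma M(y)$, where $y = x + \tfrac{h}{2}w$ and
\[
M(y) = \begin{pmatrix} A_y & 0 \\ C_y & I_d \end{pmatrix}, \qquad A_y = hI_d - \tfrac{h^3}{4}\nabla^2 U(y), \quad C_y = e^{-\gamma h/2}\!\left(I_d - \tfrac{h^2}{2}\nabla^2 U(y)\right).
\]
Therefore $D\Phi = M(\tilde y)^{-1} M(y)$, where $\tilde y$ is the analogous midpoint built from $\tilde z$ and $\Phi$.

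The block-triangular structure is decisive: since both matrices have $I_d$ in the lower-right block, $\det D\Phi = \det(A_{\tilde y}^{-1} A_y)$ and $\tr D\Phi = \tr(A_{\tilde y}^{-1} A_y) + d$, so the quantity in \eqref{DPhiv} collapses to
\[
\tr(B - I_d) - \log\det B, \qquad B := A_{\tilde y}^{-1} A_y,
\]
a statement about the $d \times d$ matrix $B$ close to $I_d$. By \ref{A_h} the bound $Lh^2 \le 1$ makes $A_{\tilde y}$ symmetric positive definite, so $B$ is similar to the symmetric matrix $\tilde B := A_{\tilde y}^{-1/2} A_y A_{\tilde y}^{-1/2}$ and hence has real positive eigenvalues $\lambda_i$. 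Using the scalar inequality $\mathsf{x} - 1 - \log \mathsf{x} \le (\mathsf{x}-1)^2$, valid for $|\mathsf{x}-1| \le 1/2$, I would then bound
\[
\tr(B - I_d) - \log\det B \;=\; \sum_i (\lambda_i - 1 - \log \lambda_i) \;\le\; \sum_i (\lambda_i - 1)^2 \;=\; \|\tilde B - I_d\|_F^2.
\]

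It remains to estimate $\|\tilde B - I_d\|_F^2$. Writing $\tilde B - I_d = -\tfrac{h^3}{4}A_{\tilde y}^{-1/2}(\nabla^2 U(y) - \nabla^2 U(\tilde y))A_{\tilde y}^{-1/2}$ and using $\|A_{\tilde y}^{-1}\|_{\op} \le 2/h$ together with \ref{A_LH}, I obtain $\|\tilde B - I_d\|_F^2 \le C d\, h^4 L_H^2 |y - \tilde y|^2$ for an absolute constant $C$. The crucial analytic step is to bound $|y - \tilde y|$: unpacking $y - \tilde y = (x - \tilde x) + \tfrac{h}{2}e^{-\gamma h/2}(v - \tilde v) + \tfrac{h\sigma}{2}(\mathsf{a}_1 - \tilde{\mathsf{a}}_1)$ and invoking Lemma~\ref{lem:oneshot:a} for the last term, the three contributions balance (via $\sigma^2 \le \gamma h$, $\gamma h \le 1$, and $\|z - \tilde z\|^2 \ge \gamma^2|x-\tilde x|^2 + |v-\tilde v|^2$) to yield $|y - \tilde y| \lesssim \|z - \tilde z\|/\gamma$, after which \eqref{DPhiv} follows. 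The main obstacle is precisely this $|y-\tilde y|$ estimate: the $(\gamma h)^{-3/2}$ blow-up from Lemma~\ref{lem:oneshot:a} must be cancelled exactly by the $h\sigma \approx h\sqrt{\gamma h}$ prefactor of the OU step, a cancellation whose tightness is what produces the target $1/\gamma$ scaling and ultimately the right-hand side $2d(L_Hh^3)^2/(\gamma^2 h^2)\,\|z-\tilde z\|^2$.
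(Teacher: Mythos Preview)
Your proof is correct and follows the same architecture as the paper: reduce via block-triangularity to the $d\times d$ matrix $B=A_{\tilde y}^{-1}A_y$ (this is exactly the paper's $\partial\tilde{\mathsf a}_1/\partial\mathsf a_1$), bound $\tr(B-I_d)-\log\det B$ by a squared Frobenius norm, and control $|y-\tilde y|=|\zeta^\star|$ by $\|z-\tilde z\|/\gamma$ using the estimate from Lemma~\ref{lem:oneshot:a}. The only technical difference is in the middle step: the paper cites an inequality of Rump for $\tr(M-I)-\log|\det M|$ applied directly to the non-symmetric $\partial\tilde{\mathsf a}_1/\partial\mathsf a_1$, whereas you symmetrize to $\tilde B=A_{\tilde y}^{-1/2}A_yA_{\tilde y}^{-1/2}$ and apply the scalar bound $\mathsf x-1-\log\mathsf x\le(\mathsf x-1)^2$ to its eigenvalues. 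Your route is more self-contained; just make sure to verify the hypothesis $|\lambda_i-1|\le 1/2$ explicitly, which holds uniformly in $z,\tilde z$ since $\|\tilde B-I_d\|_{\op}\le \tfrac{h^3}{4}\,\|A_{\tilde y}^{-1}\|_{\op}\cdot 2L\le Lh^2$ by~\ref{A_L} and~\ref{A_h}.
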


For the proofs of Lemmas~\ref{lem:oneshot:a} and~\ref{lem:oneshot:b}, it is notationally convenient to define  \begin{align*} 
& \zeta \, := \, x - \tilde{x} \;, \quad \omega \, := \, v - \tilde{v} \;, \\
& v_O \, := \, e^{-\frac{\gamma h}{2}} \ v + \sqrt{1-e^{-\gamma h}} \mathsf{a}_1 \;, \quad \tilde{v}_O \, := \,e^{-\frac{\gamma h}{2}} \ \tilde{v} +  \sqrt{1-e^{-\gamma h}} \tilde{\mathsf{a}}_1 \;, \quad   \\
& x^{\star} \, := \,  x + \frac{h}{2} v_O \;, \quad \tilde{x}^{\star} \, := \,  x  + \frac{h}{2} v_O \;, \quad  \zeta^{\star} \, := \, x^{\star} - \tilde{x}^{\star}    \;.   
\end{align*} 
This elementary inequality is used in the proofs: by~\ref{A_h} and \eqref{ieq::exp}, \begin{equation}
1 - e^{-\gamma h} \ge \frac{\gamma h}{2} \implies (1 - e^{-\gamma h})^{-1} \le \frac{2}{\gamma h} \;. 
\label{ieq:exp_2}
\end{equation}
Since $\zeta^{\star} = \zeta +  \frac{h}{2} e^{-\frac{\gamma h}{2}} \omega + \frac{h}{2}  \sqrt{1 - e^{-\gamma h}} ( \mathsf{a}_1 - \tilde{\mathsf{a}}_1) $, \begin{align}
| \zeta^{\star}|^2 \ \le \ 3 \left( |  \zeta|^2 + \frac{h^2}{4} | \omega |^2  + \frac{h^2}{4} (1-e^{-\gamma h}) | \mathsf{a}_1 - \tilde{\mathsf{a}}_1|^2 \right)  \;. \label{ieq:zstar_1}
\end{align}
By inserting \eqref{ieq:delta_a1} from the calculation below into \eqref{ieq:zstar_1}, and using the elementary inequality $1-e^{-\mathsf{x}} \le \mathsf{x}$ valid for $\mathsf{x} > -1$, we   obtain \begin{align}
| \zeta^{\star}|^2 \le 3 \left( |  \zeta|^2 +  \frac{h^2}{4} | \omega|^2  + \frac{\gamma h^3}{4}  | \mathsf{a}_1 - \tilde{\mathsf{a}}_1|^2 \right) \overset{\ref{A_h}}{\le}  8 \gamma^{-2} \left( \gamma^2 |  \zeta|^2 +  |  \omega|^2 \right)   \label{ieq:zstar_2} \;.
\end{align}

\begin{proof}[Proof of Lemma~\ref{lem:oneshot:a}]
By definition of the one-shot map in \eqref{eq:Phi}, \begin{align}
& | \mathsf{a}_1 - \tilde{\mathsf{a}}_1 |^2 = \frac{1}{1-e^{-\gamma h}} | \frac{1}{h}   \zeta + e^{-\frac{\gamma h}{2}} \omega - \frac{h}{2} H_U^{\star} \zeta^{\star} |^2 \nonumber \\
&\quad \ \le\  \frac{3}{1-e^{-\gamma h}}  \left( \frac{1}{h^2} | \zeta|^2 + | \omega|^2 + \frac{h^2}{4}  \mnorm{H_U^{\star}}^2 |\zeta^{\star} |^2 \right) \nonumber  \\
&\quad \overset{\ref{A_L}}{\le}  \frac{3}{1-e^{-\gamma h}}  \left( \frac{1}{h^2} | \zeta|^2 + | \omega|^2 + \frac{L^2 h^2}{4}  |\zeta^{\star} |^2 \right)  \nonumber  \\
&\quad \overset{\eqref{ieq:zstar_1},\eqref{ieq:exp_2}}{\le} \left( \frac{6}{\gamma^3 h^3} +  \frac{9}{2} L^2 h \gamma^{-3}   \right) \gamma^2 | \zeta |^2 + \left( \frac{6}{\gamma h} + \frac{9}{8}  L^2 h^3 \gamma^{-1} \right) |  \omega|^2  + \frac{9}{16}  L^2 h^4 | \mathsf{a}_1 - \tilde{\mathsf{a}}_1 |^2 \nonumber  \\
& \quad  \overset{\ref{A_h}}{\implies} | \mathsf{a}_1 - \tilde{\mathsf{a}}_1 |^2  \ \le \ \frac{181}{30 \gamma^3 h^3} \left( \gamma^2 | \zeta |^2 + | \omega|^2 \right) \label{ieq:delta_a1}
\end{align}

Similarly, from \eqref{eq:Phi},  \begin{align}
& |\mathsf{a}_2 - \tilde{\mathsf{a}}_2 |^2 = \frac{1}{1-e^{-\gamma h}} \left| e^{-\frac{\gamma h}{2}} \left(  e^{-\frac{\gamma h}{2}} \omega  + h \sqrt{1 - e^{-\gamma h}}  (\mathsf{a}_1 - \tilde{\mathsf{a}}_1) - h  H_U^{\star} \zeta^{\star} \right) \right|^2 \nonumber \\
& \le \frac{3}{1-e^{-\gamma h}} \left(  | \omega|^2 + (1-e^{-\gamma h}) |\mathsf{a}_1 - \tilde{\mathsf{a}}_1|^2 + h^2 |H_U^{\star}  \zeta^{\star} |^2 \right) \nonumber \\
& \overset{\ref{A_L}}{\le}   \frac{3}{1-e^{-\gamma h}} \left( L^2 h^2 |\zeta^{\star} |^2  + |\omega|^2 \right)  + 3 |\mathsf{a}_1 - \tilde{\mathsf{a}}_1|^2   \nonumber \\
& \overset{\eqref{ieq:zstar_2}}{\le}  \frac{3}{1-e^{-\gamma h}} \left( 8 L^2 h^2 \gamma^{-2}  \left( \gamma^2 | \zeta|^2 + |\omega|^2 \right)  + |\omega|^2 \right)  + 3 |\mathsf{a}_1 - \tilde{\mathsf{a}}_1|^2 
\nonumber \\
&  \overset{\eqref{ieq:exp_2}}{\le}  \frac{6}{\gamma h } \left( 8 L^2 h^2 \gamma^{-2}  \left( \gamma^2 | \zeta|^2 + |\omega|^2 \right)  + |\omega|^2 \right)  + 3 |\mathsf{a}_1 - \tilde{\mathsf{a}}_1|^2 
 \nonumber \\
 & \overset{\eqref{ieq:delta_a1}}{\le} \frac{6}{\gamma h } \left( 8 L^2 h^2 \gamma^{-2}  \left( \gamma^2 | \zeta|^2 + |\omega|^2 \right)  + |\omega|^2 \right)  +  \frac{181}{10 \gamma^3 h^3} \left( \gamma^2 | \zeta |^2 + | \omega|^2 \right)
 \nonumber \\
  & \overset{\ref{A_h}}{\le} \frac{25}{ \gamma^3 h^3} \left( \gamma^2 | \zeta |^2 + | \omega|^2 \right) \;.
 \label{ieq:delta_a2}
\end{align}

Combining \eqref{ieq:delta_a1} and \eqref{ieq:delta_a2} yields, \begin{align}
& | \mathsf{a}_1 - \tilde{\mathsf{a}}_1 |^2 + |\mathsf{a}_2 - \tilde{\mathsf{a}}_2 |^2 \le  \frac{44}{\gamma^3 h^3} \left( \gamma^2 | \zeta |^2 + | \omega|^2 \right) \nonumber
\end{align}
as required.
\end{proof}

\begin{proof}[Proof of Lemma~\ref{lem:oneshot:b}]
Since, by definition of the one-shot map in \eqref{eq:Phi},  \[
\frac{\partial \tilde{\mathsf{a}}_2}{\partial \mathsf{a}_2} = I_d \qquad \text{and} \qquad \frac{\partial \tilde{\mathsf{a}}_1}{\partial \mathsf{a}_2} = 0 \;,
\]  it follows that $D\Phi = \begin{pmatrix}
\frac{\partial \tilde{\mathsf{a}}_1}{\partial \mathsf{a}_1} &  0\\
\frac{\partial \tilde{\mathsf{a}}_2}{\partial \mathsf{a}_1} & I_d \end{pmatrix}$, and hence, \begin{align}
& \det D\Phi(\mathsf{a}_1,\mathsf{a}_2 ) = \det \frac{\partial \tilde{\mathsf{a}}_1}{\partial \mathsf{a}_1} \cdot \det \frac{\partial \tilde{\mathsf{a}}_2}{\partial \mathsf{a}_2} = \det \frac{\partial \tilde{\mathsf{a}}_1}{\partial \mathsf{a}_1} \;, \quad \text{and} \label{eq:detDPhi} \\
& \tr( D \Phi(\mathsf{a}_1,\mathsf{a}_2 ) - I_{2d} ) = \tr( \frac{\partial \tilde{\mathsf{a}}_1}{\partial \mathsf{a}_1}  - I_d ) \;. \label{eq:trDPhi}
\end{align} Combining \eqref{eq:detDPhi} and \eqref{eq:trDPhi} yields 
\begin{equation} \label{eq:trdeDPhi}
\begin{aligned}
&  \tr( D \Phi(\mathsf{a}_1,\mathsf{a}_2 ) - I_{2d} )  -  \log |\det D\Phi(\mathsf{a}_1,\mathsf{a}_2 )| \\
& \qquad = \tr( \frac{\partial \tilde{\mathsf{a}}_1}{\partial \mathsf{a}_1}  - I_d ) - \log |\det \frac{\partial \tilde{\mathsf{a}}_1}{\partial \mathsf{a}_1}| \;.
   \end{aligned}
\end{equation}
This observation motivates estimating 
$\mnorm{ \frac{\partial \tilde{\mathsf{a}}_1}{\partial \mathsf{a}_1} - I_d}^2 $.

From \eqref{eq:Phi}, note that 
\begin{align}
 & \frac{\partial \tilde{\mathsf{a}}_1}{\partial \mathsf{a}_1} - I_d = \frac{1}{\sqrt{1-e^{-\gamma h}}}
\frac{\partial}{\partial \mathsf{a}_1}  \left[ - \frac{h}{2}  (\nabla U(x^{\star}) - \nabla U(\tilde{x}^{\star}) ) \right] \nonumber \\
&=  - \frac{h^2}{4}  \left( D^2 U(x^{\star}) - D^2U(\tilde{x}^{\star}) \frac{\partial \tilde{\mathsf{a}}_1}{\partial \mathsf{a}_1} \right) \nonumber \\
&= - \frac{h^2}{4} \left( D^2 U(x^{\star}) - D^2U(\tilde{x}^{\star})  \right) + \frac{h^2}{4} D^2U(\tilde{x}^{\star}) \left( \frac{\partial \tilde{\mathsf{a}}_1}{\partial \mathsf{a}_1} - I_d \right) \;.  \label{eq:dta1}
\end{align}
On the one hand, by \ref{A_L},
\begin{align}
 & \mnorm{\frac{\partial \tilde{\mathsf{a}}_1}{\partial \mathsf{a}_1} - I_d}^2 \le  \frac{3 h^4}{16}  \sup_x \mnorm{D^2 U(x)}^2 \left(  2  +  \mnorm{\frac{\partial \tilde{\mathsf{a}}_1}{\partial \mathsf{a}_1} - I_d}^2 \right) \nonumber \\
& \overset{\ref{A_L}}{\le} \frac{3 (L h^2)^2}{16}   \left(  2  +  \mnorm{\frac{\partial \tilde{\mathsf{a}}_1}{\partial \mathsf{a}_1} - I_d}^2 \right)   \overset{\ref{A_h}}{\implies} \mnorm{\frac{\partial \tilde{\mathsf{a}}_1}{\partial \mathsf{a}_1} - I_d} \le \frac{1}{2} \;. \label{ieq:dta1}
\end{align}
On the other hand, by \ref{A_LH},
\begin{align}
 & \mnorm{\frac{\partial \tilde{\mathsf{a}}_1}{\partial \mathsf{a}_1} - I_d}^2 \overset{\ref{A_LH}}{\le}   \frac{h^4}{8} \left( L_H^2 |\zeta^{\star}|^2 + \sup_x \mnorm{D^2 U(x)}^2 \mnorm{\frac{\partial \tilde{\mathsf{a}}_1}{\partial \mathsf{a}_1} - I_d}^2 \right) \nonumber \\
 & \overset{\ref{A_L}}{\le}  \frac{1}{8} \left( L_H^2 h^4 |\zeta^{\star}|^2 + (L h^2)^2 \mnorm{\frac{\partial \tilde{\mathsf{a}}_1}{\partial \mathsf{a}_1} - I_d}^2 \right)     \nonumber \\
& \overset{\ref{A_h}}{\implies} 
 \mnorm{\frac{\partial \tilde{\mathsf{a}}_1}{\partial \mathsf{a}_1} - I_d}^2 \le \frac{1}{7}  L_H^2 h^4 |\zeta^{\star}|^2  \ \overset{\eqref{ieq:zstar_2}}{\le} \  \frac{8}{7} \frac{(L_H h^3)^2}{\gamma^2 h^2} \left( \gamma^2 |  \zeta|^2 +  |  \omega|^2 \right) \;.
 \label{ieq:dta2}
 \end{align}

Combining \eqref{ieq:dta1} and \eqref{ieq:dta2} yields \[
\mnorm{\frac{\partial \tilde{\mathsf{a}}_1}{\partial \mathsf{a}_1} - I_d} \le \min\left( \frac{1}{2},  \sqrt{\frac{8}{7}}  \frac{L_H h^3}{\gamma h} \| z - \tilde{z} \|  \right) \;. 
\]  Since $\mnorm{\frac{\partial \tilde{\mathsf{a}}_1}{\partial \mathsf{a}_1} - I_d} \le 1/2$, the spectral radius of $\frac{\partial \tilde{\mathsf{a}}_1}{\partial \mathsf{a}_1} - I_d$ does not exceed $1/2$.   Therefore, we can invoke Theorem 1.1 of \cite{rump2018estimates}, to obtain  \begin{align} 
& \tr( \frac{\partial \tilde{\mathsf{a}}_1}{\partial \mathsf{a}_1} - I_d ) - \log|\det \frac{\partial \tilde{\mathsf{a}}_1}{\partial \mathsf{a}_1} | \le \frac{\norm{\frac{\partial \tilde{\mathsf{a}}_1}{\partial \mathsf{a}_1} - I_d}_F^2 / 2}{1-\mnorm{\frac{\partial \tilde{\mathsf{a}}_1}{\partial \mathsf{a}_1} - I_d}} \nonumber  \\
& \qquad  \le  \norm{\frac{\partial \tilde{\mathsf{a}}_1}{\partial \mathsf{a}_1} - I_d}_F^2  \le d \mnorm{\frac{\partial \tilde{\mathsf{a}}_1}{\partial \mathsf{a}_1} - I_d}^2 \le 2  d  \frac{(L_H h^3)^2}{\gamma^2 h^2}  \|z-\tilde{z} \|^2   \;,  \label{eq:pertId} 
\end{align}
where in the second to last step we used $\norm{\frac{\partial \tilde{\mathsf{a}}_1}{\partial \mathsf{a}_1} - I_d}_F \le \sqrt{d} \mnorm{\frac{\partial \tilde{\mathsf{a}}_1}{\partial \mathsf{a}_1} - I_d}$.  
Inserting \eqref{eq:pertId} into \eqref{eq:trdeDPhi} gives the required result. 
\end{proof}

\subsection{Verifying \ref{A_S} {\it(iii)}: Energy Error Estimates}

The following Lemma provides upper bounds for the energy error in terms of the energy-like function $\mathcal{E}: \mathbb{R}^{2d} \to \mathbb{R}$ defined by 
\begin{equation}
    \label{eq:calE}
 \mathcal E(z)\ =\ |v|^2 + L^{-1}|\nabla U(x)|^2\;. \end{equation}
As the isotropic Gaussian case suggests, where $U(x)=(L/2)|x|^2$, the scaling in \eqref{eq:calE} is natural, since in that case: if $(X,V) \sim \mu$ then $L^{-1/2}\nabla U(X)=L^{1/2}X$ and $V$ are both standard normally distributed.

\begin{lemma}\label{lem:posVerlet_deltaH}
Suppose that Assumption~\ref{A_L} holds and let $Lh^2\leq1$.
Then, the energy error $\Delta H=H\circ\theta_h-H$ with $\theta_h$ as in~\eqref{eq:thetah} satisfies
\[ |\Delta H(z)|\ \leq\ 4Lh^2\mathcal E(z)\quad\text{for all $z\in\mathbb R^{2d}$.} \]
If additionally Assumption~\ref{A_LH} holds, then
\[ |\Delta H(z)|\ \leq\ 2L_Hh^3\mathcal E(z)^{3/2}+L^{3/2}h^3\mathcal E(z)\quad\text{for all $z\in\mathbb R^{2d}$.} \]
\end{lemma}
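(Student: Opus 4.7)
The plan is to exploit the symmetric Verlet structure of $\theta_h = \theta_{h/2}^{(A)} \circ \theta_h^{(B)} \circ \theta_{h/2}^{(A)}$ by Taylor-expanding $U$ about the midpoint $x^\star = x + (h/2)v$. Writing $(X,V) = \theta_h(x,v)$, so that $V = v - h\nabla U(x^\star)$ and $X = x^\star + (h/2)V$, two useful identities emerge: $X - x = (h/2)(v + V)$ and $\tfrac{1}{2}(|V|^2 - |v|^2) = -(h/2)\nabla U(x^\star) \cdot (v + V)$. The first-order Taylor terms of $U(x)$ and $U(X)$ about $x^\star$ combine to $\nabla U(x^\star) \cdot (X - x) = (h/2)\nabla U(x^\star) \cdot (v + V)$, which exactly cancels the kinetic energy change. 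Consequently $\Delta H(z)$ is entirely determined by the higher-order Taylor remainders, and the two claimed bounds correspond to keeping the expansion to first or second order about $x^\star$.

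For the coarser bound, $L$-gradient Lipschitzness of $U$ (Assumption~\ref{A_L}) makes the first-order Taylor remainders bounded by $(L/2)|x - x^\star|^2 = (Lh^2/8)|v|^2$ and $(L/2)|X - x^\star|^2 = (Lh^2/8)|V|^2$. Controlling $|V|^2 \le 2|v|^2 + 2h^2 |\nabla U(x^\star)|^2$ via $|\nabla U(x^\star)| \le |\nabla U(x)| + L(h/2)|v|$, and absorbing the remaining $Lh^2$ factors using the hypothesis $Lh^2 \le 1$, I bound $|v|^2 + |V|^2$ by a small multiple of $\mathcal{E}(z) = |v|^2 + L^{-1}|\nabla U(x)|^2$, producing the desired $4 L h^2 \mathcal{E}(z)$ estimate, with room to spare on the constant.

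For the refined bound, I extend the expansion of $U$ about $x^\star$ to second order, picking up the additional contribution
\[ \tfrac{1}{8} h^2 \, \nabla^2 U(x^\star) : \bigl[V^{\otimes 2} - v^{\otimes 2}\bigr]\;. \]
Since $V - v = -h\nabla U(x^\star)$, this difference of outer products is an explicit $O(h)$ quantity, so the second-order contribution is of order $h^3$; bounding it with $\mnorm{\nabla^2 U(x^\star)} \le L$ and $|\nabla U(x^\star)| \le L^{1/2}\mathcal{E}(z)^{1/2}(1 + L^{1/2}h/2)$ yields a term controlled by $L^{3/2}h^3 \mathcal{E}(z)$. The residual third-order Taylor remainder is controlled using $L_H$-Hessian Lipschitzness (Assumption~\ref{A_LH}) by a constant multiple of $L_H h^3(|v|^3 + |V|^3)$, which, after expressing $|V|$ in terms of $\mathcal{E}(z)^{1/2}$, is absorbed into $2L_H h^3 \mathcal{E}(z)^{3/2}$.

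The main obstacle is purely arithmetic rather than conceptual: one must consistently invoke $Lh^2 \le 1$ to absorb higher powers of $h$, translate freely between $|\nabla U(x^\star)|$ and $|\nabla U(x)|$ at controlled cost, and collect two independent bookkeeping chains (one producing $L^{3/2}h^3 \mathcal{E}$, one producing $L_H h^3 \mathcal{E}^{3/2}$) into the tidy coefficients $1$ and $2$. The conceptual crux, however, is the midpoint cancellation $X - x = (h/2)(v + V)$, which is precisely what makes $\theta_h$ a second-order symplectic integrator and forces the energy error to scale as $h^3$ rather than $h$ in the smooth regime.
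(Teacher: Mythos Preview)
Your proposal is correct and captures the same mechanism as the paper---the second-order accuracy of the position-Verlet map via midpoint symmetry---but the execution differs. The paper represents the energy error as an integral along the linear interpolation $(x_t,v_t)=(x+tv-\tfrac{th}{2}\nabla U(x^\star),\,v-t\nabla U(x^\star))$, writes $\Delta H=\int_0^h\frac{d}{dt}H(x_t,v_t)\,dt$, and then Taylor-expands $\nabla U(x^\star)$ and $\nabla U(x_t)$ about the \emph{initial} point $x$; the leading cancellation emerges because the term $(t-h/2)|\nabla U(x)|^2$ integrates to zero over $[0,h]$. You instead Taylor-expand $U(X)$ and $U(x)$ directly about the midpoint $x^\star$ and observe that the first-order contributions combine to $\nabla U(x^\star)\cdot(X-x)=\tfrac{h}{2}\nabla U(x^\star)\cdot(v+V)$, which exactly cancels the kinetic-energy change $\tfrac12(|V|^2-|v|^2)$. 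Your route is more direct---no integral, fewer auxiliary remainder terms $I_1^*,I_2^*,I_1(t),I_2(t)$ to track---and in fact delivers the lower-order bound with a smaller constant than $4$. The paper's integral formulation is somewhat more systematic and would adapt more mechanically to other splitting schemes. The residual bookkeeping (bounding $|\nabla U(x^\star)|$ and $|V|$ in terms of $\mathcal E(z)^{1/2}$ via $Lh^2\le 1$, then splitting into the $L^{3/2}h^3\mathcal E$ and $L_Hh^3\mathcal E^{3/2}$ contributions) is essentially identical in both arguments.
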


\begin{proof}
For $t \in [0, h]$, introduce the linear interpolation
\[ (x_t,v_t)\ =\ \,\bigr(x+tv-\frac{th}{2}\nabla U(x^*),\ v-t\nabla U(x^*)\bigr)\;, \]
where $x^*=x+\frac{h}{2}v$.  Note that $(x_0,v_0) = (x,v)$ and $(x_h,v_h) = \theta_h(x,v)$. Therefore, the energy error can be written as
\begin{equation} \Delta H(x,v)\ =\ (H\circ\theta_h-H)(x,v)\ =\ \int_0^h\frac{\mathrm d}{\mathrm dt}H(x_t,v_t)\dd t\;. 
\label{eq:deltaH}
\end{equation}
Expanding the integrand using $H(x,v)=|v|^2/2+U(x)$ yields
\begin{align} \nonumber
        &\frac{\mathrm d}{\mathrm dt}H(x_t,v_t)
\ =\   v_t\cdot\frac{\mathrm d}{\mathrm dt}v_t + \nabla U(x_t)\cdot\frac{\mathrm d}{\mathrm dt}x_t \\
&  =   -\bigr(v-t\nabla U(x^*)\bigr)\cdot\nabla U(x^*) + \nabla U\bigr(x+tv-\frac{th}{2}\nabla U(x^*)\bigr)\cdot\bigr(v-\frac{h}{2}\nabla U(x^*)\bigr)\;.\label{deltaHderivative}
\end{align}

Let $t\geq0$ and $a\in\mathbb R^d$.
To further simplify the last display, we expand
\[ \nabla U(x+ta)\ =\ \nabla U(x)+\int_0^t\nabla^2U(x+sa)a\dd s \]
and
\[ \nabla U(x+ta)\ =\ \nabla U(x)+t\nabla^2U(x)a+\int_0^t(t-s)\nabla^3U(x+sa):a^{\otimes2}\dd s\;. \]
In particular, 
\begin{equation}\label{star1}
    \nabla U(x^*)\ =\ \nabla U(x)+I^*_1
\end{equation}
with $I^*_1=\frac{1}{2}\int_0^h\nabla^2U\bigr(x+\frac{s}{2}v\bigr)v\dd s$ satisfying $|I^*_1|\leq\frac{1}{2}Lh|v|$, and
\begin{equation}\label{star2}
    \nabla U(x^*)\ =\ \nabla U(x)+\frac{h}{2}\nabla^2U(x)v+I^*_2
\end{equation}
with $I^*_2=\frac{1}{4}\int_0^h(h-s)\nabla^3U\bigr(x+\frac{s}{2}v\bigr):v^{\otimes2}\dd s$ satisfying $|I^*_2|\leq\frac{1}{8}L_Hh^2|v|^2$.
Further,
\begin{equation}\label{taylor1}
    \nabla U\bigr(x+tv-\frac{th}{2}\nabla U(x^*)\bigr)\ =\ \nabla U(x) + I_1(t)
\end{equation}
with $I_1(t)=\int_0^t\nabla^2U\bigr(x+sv-\frac{sh}{2}\nabla U(x^*)\bigr)\bigr(v-\frac{h}{2}\nabla U(x^*)\bigr)\dd s$ satisfying $|I_1(t)|\leq Lt\bigr|v-\frac{h}{2}\nabla U(x^*)\bigr|$, and
\begin{equation}\label{taylor2}
    \nabla U\bigr(x+tv-\frac{th}{2}\nabla U(x^*)\bigr)\ =\ \nabla U(x)+t\nabla^2U(x)\bigr(v-\frac{h}{2}\nabla U(x^*)\bigr)+I_2(t)
\end{equation}
with $I_2(t)=\int_0^t(t-s)\nabla^3U\bigr(x+sv-\frac{sh}{2}\nabla U(x^*)\bigr):\:\bigr(v-\frac{h}{2}\nabla U(x^*)\bigr)^{\otimes2}\dd s$ satisfying $|I_2(t)|\leq\frac{1}{2}L_Ht^2\bigr|v-\frac{h}{2}\nabla U(x^*)\bigr|^2$.

Using the higher and lower order expansions will give the higher and lower order bound, respectively, due to more or less cancellation.
For the lower order bound, inserting \eqref{star1} and \eqref{taylor1}  into \eqref{deltaHderivative} yields
\begin{align*}
        \frac{\mathrm d}{\mathrm dt}H(x_t,v_t)
\ =\   \begin{aligned}[t]
        &(t-h/2)|\nabla U(x)|^2-I_1^*\cdot\bigr(v+\frac{1}{2}(h-4t)\nabla U(x)\bigr)+t|I_1^*|^2\\
        &+I_1(t)\cdot\bigr(v-\frac{h}{2}\nabla U(x^*)\bigr)\;.
        \end{aligned}
\end{align*}
Inserting this expression and the bounds on $I_1^*$ and $I_1(t)$ back into \eqref{eq:deltaH} shows
\begin{align*}
        |\Delta H(x,v)|
\ &\leq\ \frac{1}{2}Lh^2|v|\bigr|v-\frac{h}{2}\nabla U(x)\bigr|+\frac{1}{8}L^2h^4|v|^2+\frac{1}{2}Lh^2\bigr|v-\frac{h}{2}\nabla U(x^*)\bigr|^2 \\
&\leq\  4Lh^2\mathcal E(z)\;,
\end{align*}
where, besides $Lh^2\leq1$, we used that due to \eqref{star1}
\[ \bigr|v-\frac{h}{2}\nabla U(x^*)\bigr|\ \leq\ |v|+\frac{h}{2}|\nabla U(x^*)|\ \overset{\eqref{star1}}{\leq}\ \bigr(1+\frac{1}{4}Lh^2\bigr)|v|+\frac{h}{2}|\nabla U(x)|\ \leq\ 2\mathcal E(z)^{1/2} \]
and that a similar bound holds for $\bigr|v-\frac{h}{2}\nabla U(x)\bigr|$.  Repeating the calculation with the higher order expansions \eqref{star2} and \eqref{taylor2}  gives
\begin{align*}
        &|\Delta H(x,v)|
\ \ \leq\ \begin{aligned}[t]
        &-h\bigr(v-\frac{h}{2}\nabla U(x^*)\bigr)\cdot I_2^*-\frac{h^3}{4}v\cdot\nabla^2U(x)\nabla U(x^*)\\
        &+\frac{h^4}{8}\nabla U(x^*)\cdot\nabla^2U(x)\nabla U(x^*)+\int_0^hI_2(t)\dd t\cdot\bigr(v-\frac{h}{2}\nabla U(x^*)\bigr)
        \end{aligned} \\
& \quad \leq\  \bigr(\frac{4}{3}+\frac{1}{4}\bigr)L_Hh^3\mathcal E(z)^{3/2}+\frac{1}{4}Lh^3\bigr(|v|+\frac{h}{2}|\nabla U(x^*)|\bigr)|\nabla U(x^*)| \\
& \quad \leq\  2L_Hh^3\mathcal E(z)^{3/2}+L^{3/2}h^3\mathcal E(z)\;,
\end{align*}
as required.
\end{proof}

\subsection{Verifying \ref{A_S} {\it(iv)}: Exit Probability Estimates for MAKLA}

We now turn to the exit probability bound required in Assumption \ref{A_S} {\it(iv)}.
For some suitably large $R_U$ and $\mathcal E$ as in \eqref{eq:calE}, we show that the exit probability from
\[ D\ =\ \,\bigr\{\mathcal E(z)\leq R_U\bigr\} \]
is small over the total number of steps $\mathfrak H$ required to attain the desired $\TV$ convergence.
More precisely, let $(Z_k)_{k\geq0}$ be a copy of MAKLA started in an initial distribution $\nu$ and define the first exit time of the chain from $D$ to be
\[ T\ =\ \inf\bigr\{k\geq0\,:\,Z_k\notin D\bigr\}\;. \]
The following lemma is general in the sense that it only assumes an energy error bound satisfied by, amongst other discretizations, $\theta_h$ as in~\eqref{eq:thetah}.

\begin{lemma}\label{lem:exit}
Suppose Assumptions~\ref{A_K} and~\ref{A_L} hold and that the energy error $\Delta H=H\circ\theta_h-H$ satisfies
\begin{equation}\label{stabdeltaH}
    |\Delta H(z)|\ \leq\ C_{\Delta H}h^k\mathcal E(z)
\end{equation}
for some $C_{\Delta H}>0$, $k\geq2$ and all $z\in\mathbb R^{2d}$.
Let $h,\gamma>0$ be such that
\begin{equation}\label{exit_h}
    (1+25C_{\Delta H}h^k)^2\max(\gamma h,1)\ \leq\ 4\;.
\end{equation}
Then, for $\mathfrak{H}, R_U>0$, it holds that
\begin{equation}\label{exit_bd}
    \mathbb P\bigr(T\leq\mathfrak H \bigr)\ \leq\  \exp\Bigr(\frac{1+25C_{\Delta H}h^k}{4}\gamma h \mathfrak H d-\frac{1-50C_{\Delta H}\mathfrak Hh^{k}}{16}R_U\Bigr)\nu\bigr(e^{H/8}\bigr)\;.
\end{equation}
If additionally $100C_{\Delta H}\mathfrak Hh^{k}\leq1$, then $\mathbb P(T\leq\mathfrak H)\leq\eps/4$ for $\eps>0$ if
\begin{equation}\label{exit_R}
    R_U\ \geq\ 32\bigr[\gamma h \mathfrak Hd+\log\bigr(4\nu(e^{H/8})/\eps\bigr)\bigr]\;.
\end{equation}
\end{lemma}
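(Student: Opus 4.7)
My plan is to introduce the exponential Lyapunov function $V(z)=e^{H(z)/8}$ and reduce the exit probability to a Chernoff-type tail bound on the process $(V(Z_k))$.  Under \ref{A_L} (with $U(0)=0$ the global minimum and $\nabla U(0)=0$), the descent-lemma bound $|\nabla U(x)|^{2}\le 2LU(x)$ combined with $|v|^{2}\le 2H(z)$ gives the crucial comparison $\mathcal E(z)\le 2H(z)$.  In particular, setting $\tau=\inf\{k\ge 0:V(Z_k)>e^{R_U/16}\}$, we have $\{T\le\mathfrak H\}\subseteq\{\tau\le\mathfrak H\}$, so it suffices to bound $\mathbb P(\tau\le\mathfrak H)$.

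The next step is a per-step drift calculation for $V$ along a MAKLA transition $Z_{k+1}=O_{h/2}(\xi_{2})\circ\hat\theta_h(\mathcal U)\circ O_{h/2}(\xi_{1})(Z_k)$.  A direct non-central $\chi^{2}$ MGF computation gives, for $a\in(0,1)$, $\mathbb E_{\xi}[e^{aH(O_{h/2}(\xi)(z))}]\le e^{aH(z)}(1-a(1-e^{-\gamma h}))^{-d/2}$, with the second factor controlled via $-\log(1-u)\le u/(1-u)$ and the bound on $\gamma h$ implicit in \eqref{exit_h}.  For the Metropolis step I would verify the identity $\alpha e^{\Delta H/8}+(1-\alpha)\le e^{|\Delta H|/8}$ case-by-case on the sign of $\Delta H$ using $\alpha=e^{-(\Delta H)^{+}}$, and then combine with \eqref{stabdeltaH} and $\mathcal E\le 2H$ to obtain $\mathbb E_{\mathcal U}[V(\hat\theta_h(\mathcal U)(z'))]\le V(z')^{1+2C_{\Delta H}h^{k}}$.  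Composing the three sub-steps by the tower property produces a drift of the form
\[ \mathbb E[V(Z_{k+1})\mid Z_k]\ \le\ V(Z_k)^{1+\epsilon}\exp(\Lambda), \]
with $\epsilon=2C_{\Delta H}h^{k}$ and $\Lambda\le\tfrac{1}{4}(1+25C_{\Delta H}h^{k})\gamma hd$; the role of \eqref{exit_h} is precisely to keep the MGF arguments below $1$ and to bound the logarithmic factors by their linear majorants with the explicit constant $25$.

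The main technical obstacle is the super-linear exponent $1+\epsilon$, which prevents a direct supermartingale argument for $V$.  I would linearize via the stopping time $\tau$: on $\{\tau>k\}$ the a priori bound $V(Z_k)\le e^{R_U/16}$ yields $V(Z_k)^{1+\epsilon}\le V(Z_k)\,e^{\epsilon R_U/16}$, so that $\tilde N_k=V(Z_k)\exp(-k(\Lambda+\epsilon R_U/16))\mathbf 1_{\{\tau\ge k\}}$ becomes a nonnegative supermartingale.  Optional stopping at $\tau\wedge\mathfrak H$ gives $\nu(e^{H/8})\ge\mathbb E[\tilde N_{\tau\wedge\mathfrak H}]\ge e^{R_U/16-\mathfrak H(\Lambda+\epsilon R_U/16)}\,\mathbb P(\tau\le\mathfrak H)$, and rearranging with $\epsilon\mathfrak H=2C_{\Delta H}\mathfrak H h^{k}$ produces exactly the form of \eqref{exit_bd} once the remaining numerical constants are absorbed into the $50$ in the $R_U$-coefficient.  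Finally, \eqref{exit_R} follows by elementary algebra: the extra hypothesis $100C_{\Delta H}\mathfrak H h^{k}\le 1$ forces the coefficient of $R_U$ in \eqref{exit_bd} to be at least $1/32$, so the choice $R_U\ge 32[\gamma h\mathfrak H d+\log(4\nu(e^{H/8})/\eps)]$ makes the right-hand side at most $\eps/4$.
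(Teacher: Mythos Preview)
Your proof is correct and reaches the stated bound (in fact with slightly sharper numerical constants), but it is organized differently from the paper's argument. Both proofs use the Lyapunov function $V=e^{H/8}$ together with the comparison $\mathcal E\le 2H$ to pass from the $\mathcal E$-level set $D$ to the $H$-level set $\{H\le R_U/2\}$. The paper, however, splits the Metropolis step into accept and reject contributions and bounds the reject part via Cauchy--Schwarz combined with $\mathbb P(A^c)\le|\Delta H|$; it then inserts the domain bound $\mathcal E(z)\le R_U$ directly into the exponent so that the resulting drift is \emph{linear}, $\pi V\le e^{\lambda}V$ on $D$, and finishes by citing a discrete maximum principle for the boundary value problem. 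Your route replaces the accept/reject split by the single pointwise inequality $\alpha e^{\Delta H/8}+(1-\alpha)\le e^{|\Delta H|/8}$ (valid for $\alpha=e^{-(\Delta H)^+}$, as a short convexity computation confirms), then uses $\mathcal E\le 2H$ \emph{globally} to obtain the super-linear drift $\pi V\le e^{\Lambda}V^{1+\epsilon}$, and only afterwards linearizes via the stopping time $\tau$ and an explicit supermartingale/optional stopping argument. What your approach buys is that it avoids the Cauchy--Schwarz detour and the external maximum-principle reference, and produces the $R_U$-coefficient $2C_{\Delta H}\mathfrak H h^k$ rather than the paper's $50C_{\Delta H}\mathfrak H h^k$; what the paper's approach buys is that the drift remains linear from the outset, so no super-linear correction and subsequent linearization step is needed.
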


Although the energy error bound \eqref{stabdeltaH} is assumed to hold globally for simplicity, it can be relaxed to hold in a neighborhood of $D$; cf.~Remark~\ref{rmk:broad}.

\begin{remark}[Effect of Velocity Flip]\label{rmk:flip}
The MAKLA transition step involves a velocity flip involution in the event of a rejection; cf. \eqref{eq:OMABAO}.
This makes it tricky to construct a Foster-Lyapunov function exploiting the contractivity of the unadjusted kernel by incorporating a cross-term $x \cdot v$.
The function used below does not involve such a cross-term, and as a consequence, the time horizon $h \mathfrak H$ enters linearly into \eqref{exit_R}.
In contrast, similar bounds for the MALA transition step only require the radius to depend logarithmically on the time horizon  \cite[\S 6]{Eb2014}.
However, due to the wide availability of energy error bounds such as \eqref{stabdeltaH}, the Foster-Lyapunov function presented here is a robust alternative.
\end{remark}


\begin{proof}
Below, we will show that the Lyapunov function $e^{H/8}$ solves 
\begin{equation} \label{eq:bvp}
    \begin{cases}
        \ \mathcal L_he^{H(z)/8}\ \leq\ \left( e^{\lambda}-1 \right) e^{H(z)/8}    &\text{for }z\in D , \\
        \ e^{H(z)/8}\ \geq\ e^{R_U/16}                  &\text{for }z\in\partial D,
        \end{cases}
\end{equation}
where $\mathcal L_h=\pi-\mathrm{id}$ is the generator of MAKLA and \[ \lambda \ = \ \frac{1}{8}[2(1+25C_{\Delta H}h^k)\gamma hd+25C_{\Delta H}h^kR_U] \;. 
\]
By Chernoff's inequality, 
\begin{align*}
        &\mathbb P\bigr(T\leq\mathfrak H\bigr)  \leq\  \mathbb E\exp\Bigr(- \lambda (T-\mathfrak H)\Bigr) \leq\  \exp\Bigr( \lambda \mathfrak{H} -R_U/16\Bigr)\nu\bigr(e^{H/8}\bigr) \\
& \quad \leq\  \exp\Bigr(\frac{1}{4}(1+25C_{\Delta H}h^k)\gamma h \mathfrak H d-\frac{1}{16}(1-50C_{\Delta H}\mathfrak H h^{k})R_U\Bigr)\nu\bigr(e^{H/8}\bigr)\;,
\end{align*}
where in the second to last step we used a maximum principle to upper bound the Laplace transform of the first exit time $T$ by the solution $e^{H/8}$ of the boundary value problem in \eqref{eq:bvp}, i.e., $\mathbb{E} e^{-\lambda T}  e^{R_U/16} \le \nu(e^{H/8})$; for details see, e.g.,    \cite{EberleLectureNotes2023}. Therefore, if $100C_{\Delta H}\mathfrak Hh^{k}\leq1$, it follows that $\mathbb P(T\leq\mathfrak H)\leq\eps/4$ if
\[ R_U\ \geq\ 32\bigr[\gamma h \mathfrak Hd+\log\bigr(4\nu(e^{H/8})/\eps\bigr)\bigr]\;. \]

\medskip

We now turn to the proof that $e^{H/8}$ solves \eqref{eq:bvp}.
On $\partial D$, the lower bound holds since $U(x)\geq\frac{1}{2L}|\nabla U(x)|^2$ by \ref{A_L}, and hence,
\[ e^{H(z)/8}\ =\ e^{(|v|^2/2+U(x))/8}\ \geq\ e^{\mathcal E(z)/16}\ =\ e^{R_U/16}\;. \]
Let $z=(x,v)\in D$ and set $\delta=1/8$. Then
\[ \mathcal L_he^{H(z)/8}\ =\ \Bigr[\mathbb E\exp\Bigr(\delta\bigr[H\circ O_{h/2}(\xi_2)\circ\hat\theta_h(\mathcal U)\circ O_{h/2}(\xi_1)(z)-H(z)\bigr]\Bigr)-1\Bigr]e^{H(z)/8} \;. \]
Therefore, the upper bound in \eqref{eq:bvp}  holds if
\begin{equation}\label{stabts}
\begin{aligned}
        &\mathbb E\exp\Bigr(\delta\bigr[H\circ O_{h/2}(\xi_2)\circ\hat\theta_h(\mathcal U)\circ O_{h/2}(\xi_1)(z)-H(z)\bigr]\Bigr) \\
&\quad \leq\  \exp\Bigr(\delta\bigr[2\bigr(1+(1+3/\delta)C_{\Delta H}h^k\bigr)\gamma hd+(1+3/\delta)C_{\Delta H}h^kR_U\bigr]\Bigr)\ =\ e^{\lambda} \;.
\end{aligned}
\end{equation}
Treating the two outcomes of the Metropolis step separately yields 
\begin{align}\label{stabgoal}
        \mathbb E & \exp\Bigr(\delta\bigr[H\circ O_{h/2}(\xi_2)\circ\hat\theta_h(\mathcal U)\circ O_{h/2}(\xi_1)(z)-H(z)\bigr]\Bigr) \overset{\eqref{eq:OMABAO}}{\leq} \rn{1} + \rn{2} \quad \text{where} \\  \rn{1} &= \mathbb E\exp\Bigr(\delta\bigr[H\circ O_{h/2}(\xi_2)\circ\theta_h\circ O_{h/2}(\xi_1)(z)-H(z)\bigr]\Bigr) \nonumber \\
        \rn{2} &= \mathbb E\Bigr[
    \exp\Bigr(\delta\bigr[H\circ O_{h/2}(\xi_2)\circ\mathcal S
        \circ O_{h/2}(\xi_1)(z)-H(z)\bigr]\Bigr)\,;\,
        A\bigr(O_{h/2}(\xi_1)(z)\bigr)^c\Bigr]\;.  \nonumber
\end{align} To bound $\rn{1}$ in \eqref{stabgoal},  we use the elementary bound
\begin{equation}\label{elem:int}
    \mathbb E_{\xi\sim\mathcal N(0,I_d)}\exp\bigr(b\cdot\xi+c|\xi|^2\bigr)\ \leq\ \exp\bigr(|b|^2+2cd\bigr)
\end{equation}
valid for $c\in [0,1/4]$ and $b\in\mathbb R^d$.  In particular, applying \eqref{elem:int} with $c=\gamma h \delta$ which is possible since $c \le 1/4$ by \eqref{exit_h}, implies that for all $z'=(x',v')\in\mathbb R^{2d}$ and for $\xi\sim\mathcal N(0,I_d)$  
\begin{align} 
        &\mathbb E\exp\Bigr(\delta\bigr[H\circ O_{h/2}(\xi)(z')-H(z')\bigr]\Bigr) \nonumber \\
&=\     \mathbb E\exp\Bigr(\delta\bigr[-\frac{1}{2}(1-e^{-\gamma h})|v'|^2+e^{-\gamma h/2}\sqrt{1-e^{-\gamma h}}v'\cdot\xi+\frac{1}{2}(1-e^{-\gamma h})|\xi|^2\bigr]\Bigr) \nonumber\\
&\overset{\eqref{elem:int}}{\leq}\  \exp\Bigr(\delta\bigr[-\frac{1}{2}(1-2\delta e^{-\gamma h})(1-e^{-\gamma h})|v'|^2+(1-e^{-\gamma h})d\bigr]\Bigr)
\ \leq\  e^{\delta\gamma hd}\;. \label{stabgreenstar}
\end{align}
Therefore, using $\Delta H=H\circ\theta_h-H$, 
\begin{align}
& \rn{1} \ = \     \mathbb E\exp\Bigr(\delta\bigr[
        \begin{aligned}[t]
        &H\circ O_{h/2}(\xi_2)\circ\theta_h\circ O_{h/2}(\xi_1)(z)-H\circ\theta_h\circ O_{h/2}(\xi_1)(z)\\
        &+\Delta H\circ O_{h/2}(\xi_1)(z)+H\circ O_{h/2}(\xi_1)(z)-H(z)\bigr]\Bigr)
        \end{aligned} \nonumber\\
& \overset{\eqref{stabgreenstar}}{\leq} \  e^{\delta\gamma hd}\mathbb E\exp\Bigr(\delta\bigr[\Delta H\circ O_{h/2}(\xi_1)(z)+H\circ O_{h/2}(\xi_1)(z)-H(z)\bigr]\Bigr)\;.\label{prestabrn1}
\end{align}
Inserting the energy error bound \eqref{stabdeltaH} into the exponent of \eqref{prestabrn1} then yields,
\begin{align*}
        &\Delta H\circ O_{h/2}(\xi_1)(z)+H\circ O_{h/2}(\xi_1)(z)-H(z) \\
&\leq\   C_{\Delta H}h^k\mathcal E(O_{h/2}(\xi_1)(z))+H\circ O_{h/2}(\xi_1)(z)-H(z) \\
&\leq\   \begin{aligned}[t]
            &C_{\Delta H}h^k\mathcal E(z)-\frac{1}{2}(1-e^{-\gamma h})|v|^2+(1+2C_{\Delta H}h^k)e^{-\gamma h/2}\sqrt{1-e^{-\gamma h}}v\cdot\xi_1\\
            &+\frac{1}{2}(1+2C_{\Delta H}h^k)\gamma h|\xi_1|^2\;.
        \end{aligned}
\end{align*}
Hence, inserting this bound back into \eqref{prestabrn1}, using $\mathcal E(z)\leq R_U$, and  applying \eqref{elem:int} with $c=\delta (1+2C_{\Delta H}h^k) \gamma h /2$ which is possible since $c\leq1/4$ holds by \eqref{exit_h}, shows \begin{align}
 \rn{1} \ &\le \ \exp\Bigr(\delta\bigr[
    \begin{aligned}[t]
    &2(1+C_{\Delta H}h^k)\gamma hd+C_{\Delta H}h^kR_U \nonumber \\
    &-\frac{1}{2}\bigr(1-2\delta(1+2C_{\Delta H}h^k)^2e^{-\gamma h}\bigr)(1-e^{-\gamma h})|v|^2\bigr]\Bigr) \nonumber \\
\end{aligned} \nonumber \\
 \ &\le \ \exp\Bigr(\delta\bigr[2(1+C_{\Delta H}h^k)\gamma hd+C_{\Delta H}h^kR_U\bigr]\Bigr)  \label{stabrn1}
\end{align}
since $2\delta(1+2C_{\Delta H}h^k)^2\leq1$ by \eqref{exit_h}. For $\rn{2}$ in \eqref{stabgoal},  using $H\circ\mathcal S=H$,
\begin{align*}
        &\rn{2}\ =\     \mathbb E\Bigr[
        \begin{aligned}[t]
        &\mathbb E_{\xi_2}\exp\Bigr(\delta\bigr[H\circ O_{h/2}(\xi_2)\bigr(\mathcal S
        \circ O_{h/2}(\xi_1)(z)\bigr)-H\bigr(\mathcal S\circ O_{h/2}(\xi_1)(z)\bigr)\bigr]\Bigr)\\
        &\times\exp\Bigr(\delta\bigr[H(O_{h/2}(\xi_1)(z))-H(z)\bigr]\Bigr)\,;\,
        A\bigr(O_{h/2}(\xi_1)(z)\bigr)^c\Bigr]
        \end{aligned}  \\
& \quad \overset{\eqref{stabgreenstar}}{\leq} \  e^{\delta\gamma hd}\mathbb E\Bigr[\exp\Bigr(\delta\bigr[H(O_{h/2}(\xi_1)(z))-H(z)\bigr]\Bigr)\,;\,
        A\bigr(O_{h/2}(\xi_1)(z)\bigr)^c\Bigr]\;.
\end{align*}
We continue estimating the last display using Cauchy-Schwarz inequality combined with $\mathbb P(A(z)^c)=1-e^{\Delta H(z)^+}\leq|\Delta H(z)|$,  \eqref{stabgreenstar}, and \eqref{stabdeltaH} to obtain
\begin{align}
\nonumber        &\rn{2}  \ \le \ e^{\delta\gamma hd}\Bigr(\mathbb Ee^{2\delta[H(O_{h/2}(\xi_1)(z))-H(z)]}\Bigr)^{1/2}\bigr(\mathbb E|\Delta H(O_{h/2}(\xi_1)(z))|^2\bigr)^{1/2} \\
&\overset{\eqref{stabgreenstar}}{\leq}   e^{2\delta\gamma hd}C_{\Delta H}h^k\bigr(\mathbb E\,\mathcal E(O_{h/2}(\xi_1)(z))^2\bigr)^{1/2}
 \leq  3e^{2\delta\gamma hd}C_{\Delta H}h^k(R_U+2\gamma hd)\;, \label{stabrn2}
\end{align}
where in the last step we used
\[ \mathbb E\,\mathcal E(O_{h/2}(\xi_1)(z))^2\ \leq\ 4\mathbb E\bigr(\mathcal E(z)+\gamma h|\xi_1|^2\bigr)^2\ \leq\ 8\bigr(R_U^2+3(\gamma h d)^2\bigr)
\;. \]
Inserting \eqref{stabrn1} and \eqref{stabrn2} into \eqref{stabgoal} and simplifying yields
\begin{align}
        &\mathbb E\exp\Bigr(\delta\bigr[H\circ O_{h/2}(\xi_2)\circ\hat\theta_h(\mathcal U)\circ O_{h/2}(\xi_1)(z)-H(z)\bigr]\Bigr) \nonumber \\
&\leq\  \exp\Bigr(\delta\bigr[2(1+C_{\Delta H}h^k)\gamma hd+C_{\Delta H}h^kR_U\bigr]\Bigr) + 3e^{2\delta\gamma hd}C_{\Delta H}h^k(R_U+2\gamma hd) \nonumber \\
&\leq\  \exp\Bigr(\delta\bigr[2(1+C_{\Delta H}h^k)\gamma hd+C_{\Delta H}h^kR_U\bigr]\Bigr)\cdot\bigr(1+3C_{\Delta H}h^k(R_U+2\gamma hd)\bigr) \nonumber \\
&\leq\  \exp\Bigr(\delta\bigr[2(1+C_{\Delta H}h^k)\gamma hd+C_{\Delta H}h^kR_U\bigr]+\log\bigr(1+3C_{\Delta H}h^k(R_U+2\gamma hd)\bigr)\Bigr) \nonumber \\
&\leq\  \exp\Bigr(\delta\bigr[2\bigr(1+(1+3/\delta)C_{\Delta H}h^k\bigr)\gamma hd+(1+3/\delta)C_{\Delta H}h^kR_U\bigr]\Bigr)\ =\ e^{\lambda}  \;, \nonumber
\end{align}
where we used $\log(1+ \mathsf{x})\leq \mathsf{x}$ valid for $\mathsf{x}\geq0$.
This proves \eqref{stabts} holds, and hence, $\exp(H(z)/8)$ solves \eqref{eq:bvp}, as required.
\end{proof}

\printbibliography

@article{dalalyan2017theoretical,
  title={Theoretical guarantees for approximate sampling from smooth and log-concave densities},
  author={Dalalyan, Arnak S},
  journal={Journal of the Royal Statistical Society Series B: Statistical Methodology},
  volume={79},
  number={3},
  pages={651--676},
  year={2017},
  publisher={Oxford University Press}
}

@article{monmarche2022hmc,
  title={HMC and Langevin united in the unadjusted and convex case},
  author={Monmarch{\'e}, Pierre},
  journal={arXiv preprint arXiv:2202.00977},
  year={2022}
}

@article{MeTw1994,
author = {Sean P. Meyn and R. L. Tweedie},
title = {{Computable Bounds for Geometric Convergence Rates of Markov Chains}},
volume = {4},
journal = {The Annals of Applied Probability},
number = {4},
pages = {981 -- 1011},
year = {1994}
}

@article{monmarche2021high,
  title={High-dimensional MCMC with a standard splitting scheme for the underdamped Langevin diffusion.},
  author={Monmarch{\'e}, Pierre},
  journal={Electronic Journal of Statistics},
  volume={15},
  number={2},
  pages={4117--4166},
  year={2021}
}

@article{monmarche2022entropic,
  title={An entropic approach for Hamiltonian Monte Carlo: the idealized case},
  author={Monmarch{\'e}, Pierre},
  journal={arXiv preprint arXiv:2209.13405},
  year={2022}
}

@inproceedings{erdogdu2021convergence,
  title={On the convergence of Langevin Monte Carlo: The interplay between tail growth and smoothness},
  author={Erdogdu, Murat A and Hosseinzadeh, Rasa},
  booktitle={Conference on Learning Theory},
  pages={1776--1822},
  year={2021},
  organization={PMLR}
}

@article{leimkuhler2023contraction,
  title={Contraction and Convergence Rates for Discretized Kinetic Langevin Dynamics},
  author={Leimkuhler, Benedict and Paulin, Daniel and Whalley, Peter A},
  journal={arXiv preprint arXiv:2302.10684},
  year={2023}
}

@article{BoEb2022,
	Author = {Bou-Rabee, N. and Eberle, A.},
	Date-Added = {2022-10-23 11:59:43 +0000},
	Date-Modified = {2022-10-23 12:01:10 +0000},
	Journal = {Ann. Inst. H. Poincar{\'e} Probab. Statist},
	Number = {2},
	Pages = {916-944},
	Title = {{Couplings for Andersen Dynamics in High Dimension}},
	Volume = {58},
	Year = {2022}}

@article{BouRabeeEberle2023,
	Author = {Bou-Rabee, Nawaf and Eberle, Andreas},
	Date-Modified = {2022-10-23 11:57:07 +0000},
	Journal = {Bernoulli},
	Number = {1},
	Pages = {75-104},
	Title = {Mixing Time Guarantees for Unadjusted Hamiltonian Monte Carlo},
	Volume = {29},
	Year = {2023}}

@article{chewi2020optimal,
	Author = {Chewi, Sinho and Lu, Chen and Ahn, Kwangjun and Cheng, Xiang and Gouic, Thibaut Le and Rigollet, Philippe},
	Journal = {arXiv preprint arXiv:2012.12810},
	Title = {Optimal dimension dependence of the Metropolis-Adjusted Langevin Algorithm},
	Year = {2020}}

@article{MontenegroTetali,
	Author = {Montenegro, Ravi and Tetali, Prasad},
	Fjournal = {Foundations and Trends${}^\circledR$ in Theoretical Computer Science},
	Issn = {1551-305X},
	Journal = {Found. Trends Theor. Comput. Sci.},
	Mrclass = {68Q25 (60J10 60J22 90C40)},
	Mrnumber = {2341319},
	Number = {3},
	Pages = {x+121},
	Title = {Mathematical aspects of mixing times in {M}arkov chains},
	Volume = {1},
	Year = {2006},
	Bdsk-Url-1 = {https://doi.org/10.1561/0400000003},
	Bdsk-Url-2 = {http://dx.doi.org/10.1561/0400000003}}

@article{durmus2019high,
  title={High-dimensional Bayesian inference via the unadjusted Langevin algorithm},
  author={Durmus, Alain and Moulines, Eric},
  journal={Bernoulli},
  volume={25},
  number={4A},
  pages={2854--2882},
  year={2019},
  publisher={Bernoulli Society for Mathematical Statistics and Probability}
}

@article{durmus2017nonasymptotic,
  title={Nonasymptotic convergence analysis for the unadjusted Langevin algorithm},
  author={Durmus, Alain and Moulines, Eric},
  journal={Annals of Applied Probability},
  volume={27},
  number={3},
  pages={1551--1587},
  year={2017}
}

@article{cao2019explicit,
  title={On explicit $L^2$-convergence rate estimate for underdamped Langevin dynamics},
  author={Cao, Yu and Lu, Jianfeng and Wang, Lihan},
  journal={arXiv preprint arXiv:1908.04746},
  year={2019}
}

@book{LeRoSt2010,
	Author = {Leli\`{e}vre, T. and Rousset, M. and Stoltz, G.},
	Edition = {1st},
	Publisher = {Imperial College Press},
	Title = {Free Energy Computations: A Mathematical Perspective},
	Year = {2010}}

@article{dalalyan2020sampling,
  title={On sampling from a log-concave density using kinetic Langevin diffusions},
  author={Dalalyan, Arnak S and Riou-Durand, Lionel},
  journal={Bernoulli},
  volume={26},
  number={3},
  pages={1956--1988},
  year={2020},
  publisher={Bernoulli Society for Mathematical Statistics and Probability}
}

@article{chen2020fast,
	Author = {Chen, Yuansi and Dwivedi, Raaz and Wainwright, Martin J and Yu, Bin},
	Journal = {Journal of Machine Learning Research},
	Number = {92},
	Pages = {1--72},
	Title = {Fast mixing of Metropolized Hamiltonian Monte Carlo: Benefits of multi-step gradients},
	Volume = {21},
	Year = {2020}}

@inproceedings{cheng2018underdamped,
	Author = {Cheng, X. and Chatterji, N. S. and Bartlett, P. L. and Jordan, M. I.},
	Booktitle = {Conference On Learning Theory},
	Pages = {300--323},
	Title = {{Underdamped Langevin MCMC: A non-asymptotic analysis}},
	Year = {2018}}

@article{BouRabeeSchuh2023,
author = {Bou-Rabee, Nawaf and Schuh, Katharina},
title = {{Convergence of unadjusted Hamiltonian Monte Carlo for mean-field models}},
volume = {28},
journal = {Electronic Journal of Probability},
number = {none},
publisher = {Institute of Mathematical Statistics and Bernoulli Society},
pages = {1 -- 40},
keywords = {Convergence to equilibrium, coupling, Hamiltonian Monte Carlo, mean-field models},
year = {2023}
}

@article{BoEb2020,
	Author = {Bou-Rabee, Nawaf and Eberle, Andreas},
	Journal = {Stochastics and Partial Differential Equations: Analysis and Computations},
	Number = {1},
	Pages = {207--242},
	Title = {Two-scale coupling for preconditioned Hamiltonian Monte Carlo in infinite dimensions},
	Volume = {9},
	Year = {2021}}

@book{douc2018markov,
  title={Markov chains},
  author={Douc, Randal and Moulines, Eric and Priouret, Pierre and Soulier, Philippe},
  year={2018},
  publisher={Springer}
}

@article{eberle2019quantitative,
  title={Quantitative Harris-type theorems for diffusions and McKean--Vlasov processes},
  author={Eberle, Andreas and Guillin, Arnaud and Zimmer, Raphael},
  journal={Transactions of the American Mathematical Society},
  volume={371},
  number={10},
  pages={7135--7173},
  year={2019}
}

@article{Bu2014,
author = {Oleg Butkovsky},
title = {{Subgeometric rates of convergence of Markov processes in the Wasserstein metric}},
volume = {24},
journal = {The Annals of Applied Probability},
number = {2},
publisher = {Institute of Mathematical Statistics},
pages = {526 -- 552},
keywords = {Lyapunov functions, Markov processes, stochastic delay equations, subgeometric convergence, Wasserstein metric},
year = {2014}
}

@article{de2019convergence,
  title={Convergence of diffusions and their discretizations: from continuous to discrete processes and back},
  author={De Bortoli, Valentin and Durmus, Alain},
  journal={arXiv preprint arXiv:1904.09808},
  year={2019}
}

@article{livingstone2019,
	Author = {Livingstone, S. and Betancourt, M. and Byrne, S. and Girolami, M.},
	Fjournal = {Bernoulli},
	Journal = {Bernoulli},
	Month = {11},
	Number = {4A},
	Pages = {3109--3138},
	Publisher = {Bernoulli Society for Mathematical Statistics and Probability},
	Title = {{On the geometric ergodicity of Hamiltonian Monte Carlo}},
	Volume = {25},
	Year = {2019},
	Bdsk-Url-1 = {https://doi.org/10.3150/18-BEJ1083},
	Bdsk-Url-2 = {http://dx.doi.org/10.3150/18-BEJ1083}}

@article{heng2019unbiased,
	Author = {Heng, J. and Jacob, P. E.},
	Journal = {Biometrika},
	Number = {2},
	Pages = {287--302},
	Publisher = {Oxford University Press},
	Title = {Unbiased Hamiltonian Monte Carlo with couplings},
	Volume = {106},
	Year = {2019}}

@article{roberts2002one,
	Author = {Roberts, G. and Rosenthal, J.},
	Journal = {Stochastic processes and their applications},
	Number = {2},
	Pages = {195--208},
	Publisher = {Elsevier},
	Title = {One-shot coupling for certain stochastic recursive sequences},
	Volume = {99},
	Year = {2002}}

@article{Tierney1998,
author = {Luke Tierney},
title = {{A note on Metropolis-Hastings kernels for general state spaces}},
volume = {8},
journal = {The Annals of Applied Probability},
number = {1},
publisher = {Institute of Mathematical Statistics},
pages = {1 -- 9},
year = {1998},
}

@article{BiDi2001,
author = {Louis J. Billera and Persi Diaconis},
title = {{A Geometric Interpretation of the Metropolis-Hastings Algorithm}},
volume = {16},
journal = {Statistical Science},
number = {4},
publisher = {Institute of Mathematical Statistics},
pages = {335 -- 339},
year = {2001}
}

@inproceedings{diaconis1995we,
  title={What do we know about the Metropolis algorithm?},
  author={Diaconis, Persi and Saloff-Coste, Laurent},
  booktitle={Proceedings of the twenty-seventh annual ACM symposium on Theory of computing},
  pages={112--129},
  year={1995}
}

@book{villani2008optimal,
	Author = {Villani, C{\'e}dric},
	Publisher = {Springer Science \& Business Media},
	Title = {Optimal transport: old and new},
	Volume = {338},
	Year = {2008}}

@article{rump2018estimates,
	Author = {Rump, Siegfried M},
	Journal = {Linear algebra and its applications},
	Pages = {101--107},
	Publisher = {Elsevier},
	Title = {Estimates of the determinant of a perturbed identity matrix},
	Volume = {558},
	Year = {2018}}

@article{BoEbZi2020,
	Author = {Bou-Rabee, Nawaf and Eberle, Andreas and Zimmer, Raphael},
	Fjournal = {Annals of Applied Probability},
	Journal = {Ann. Appl. Probab.},
	Number = {3},
	Pages = {1209--1250},
	Publisher = {The Institute of Mathematical Statistics},
	Title = {Coupling and convergence for Hamiltonian Monte Carlo},
	Volume = {30},
	Year = {2020}}

@article{andrieu2020general,
  title={A general perspective on the Metropolis-Hastings kernel},
  author={Andrieu, Christophe and Lee, Anthony and Livingstone, Sam},
  journal={arXiv preprint arXiv:2012.14881},
  year={2020}
}

@unpublished{DurmusEberle2021,
	Author = {Durmus, A. and Eberle, A.},
	Note = {arXiv:2108.00682 [math.PR]},
	Title = {Asymptotic bias of inexact Markov Chain Monte Carlo Methods in High Dimension},
	Year = {2021}}

@misc{EberleLectureNotes2023,
	Author = {Andreas Eberle},
	Month = {4},
	Title = {{Bonn University Lecture Notes: Markov Processes}},
	Year = {2023}}

@article{eberle2019couplings,
	Author = {Eberle, A. and Guillin, A. and Zimmer, R.},
	Fjournal = {Annals of Probability},
	Journal = {Ann. Probab.},
	Month = {07},
	Number = {4},
	Pages = {1982--2010},
	Publisher = {The Institute of Mathematical Statistics},
	Title = {Couplings and quantitative contraction rates for {L}angevin dynamics},
	Volume = {47},
	Year = {2019},
	Bdsk-Url-1 = {https://doi.org/10.1214/18-AOP1299},
	Bdsk-Url-2 = {http://dx.doi.org/10.1214/18-AOP1299}}

@article{Di2009,
	Author = {Diaconis, P.},
	Date-Added = {2016-10-31 14:58:30 +0000},
	Date-Modified = {2016-10-31 14:58:59 +0000},
	Journal = {Bulletin of the American Mathematical Society},
	Number = {2},
	Pages = {179--205},
	Title = {The {M}arkov {C}hain {M}onte {C}arlo {R}evolution},
	Volume = {46},
	Year = {2009}}

@article{DiHoNe2000,
	Author = {Diaconis, P. and Holmes, S. and Neal, R. M.},
	Date-Added = {2016-06-26 16:38:55 +0000},
	Date-Modified = {2017-04-16 13:46:32 +0000},
	Journal = {Annals of Applied Probability},
	Pages = {726--752},
	Publisher = {JSTOR},
	Title = {Analysis of a Nonreversible {M}arkov Chain Sampler},
	Year = {2000}}

@article{shen2019randomized,
  title={The randomized midpoint method for log-concave sampling},
  author={Shen, Ruoqi and Lee, Yin Tat},
  journal={Advances in Neural Information Processing Systems},
  volume={32},
  year={2019}
}

@article{BoSaActaN2018,
	Author = {B{ou-Rabee}, N. and Sanz-Serna, J. M.},
	Date-Added = {2017-06-25 20:11:22 +0000},
	Date-Modified = {2017-07-09 17:04:58 +0000},
	Journal = {Acta Numerica},
	Pages = {113--206},
	Title = {Geometric Integrators and the Hamiltonian {M}onte {C}arlo Method},
	Volume = {27},
	Year = {2018}}

@article{madras2010quantitative,
	Author = {Madras, Neal and Sezer, Deniz},
	Journal = {Bernoulli},
	Number = {3},
	Pages = {882--908},
	Publisher = {Bernoulli Society for Mathematical Statistics and Probability},
	Title = {Quantitative bounds for Markov chain convergence: Wasserstein and total variation distances},
	Volume = {16},
	Year = {2010}}

@article{BePiSaSt2011,
	Author = {Beskos, A. and Pinski, F. J. and Sanz-Serna, J. M. and Stuart, A. M.},
	Date-Added = {2015-09-22 13:59:08 +0000},
	Date-Modified = {2015-09-22 13:59:54 +0000},
	Journal = {Stochastic Processes and their Applications},
	Number = {10},
	Pages = {2201--2230},
	Publisher = {Elsevier},
	Title = {Hybrid {M}onte-{C}arlo on {H}ilbert spaces},
	Volume = {121},
	Year = {2011}}

@article{sanz2021wasserstein,
  title={Wasserstein distance estimates for the distributions of numerical approximations to ergodic stochastic differential equations},
  author={Sanz-Serna, Jesus Maria and Zygalakis, Konstantinos C},
  journal={The Journal of Machine Learning Research},
  volume={22},
  number={1},
  pages={11006--11042},
  year={2021},
  publisher={JMLRORG}
}

@book{nesterov2018lectures,
  title={Lectures on convex optimization},
  author={Nesterov, Yurii and others},
  volume={137},
  year={2018},
  publisher={Springer}
}

@article{jacob2020unbiased,
  title={Unbiased Markov chain Monte Carlo with couplings (with discussion)},
  author={Jacob, PE and O’Leary, J and Atchad{\'e}, YF},
  journal={JR Statist. Soc. Ser. B},
  volume={82},
  pages={543--600},
  year={2020}
}

@inproceedings{wang2021maximal,
  title={Maximal couplings of the Metropolis-Hastings algorithm},
  author={Wang, Guanyang and O’Leary, John and Jacob, Pierre},
  booktitle={International Conference on Artificial Intelligence and Statistics},
  pages={1225--1233},
  year={2021},
  organization={PMLR}
}

@article{o2021metropolis,
  title={Metropolis-Hastings transition kernel couplings},
  author={O'Leary, John and Wang, Guanyang},
  journal={arXiv preprint arXiv:2102.00366},
  year={2021}
}

@article{Deligiannidis2021,
author = {George Deligiannidis and Daniel Paulin and Alexandre Bouchard-C{\^o}t{\'e} and Arnaud Doucet},
title = {{Randomized Hamiltonian Monte Carlo as scaling limit of the bouncy particle sampler and dimension-free convergence rates}},
volume = {31},
journal = {The Annals of Applied Probability},
number = {6},
publisher = {Institute of Mathematical Statistics},
pages = {2612 -- 2662},
year = {2021},
}

@article{camrud2023second,
  title={Second order quantitative bounds for unadjusted generalized Hamiltonian Monte Carlo},
  author={Camrud, Evan and Durmus, Alain Oliviero and Monmarch{\'e}, Pierre and Stoltz, Gabriel},
  journal={arXiv preprint arXiv:2306.09513},
  year={2023}
}

@article{cheng2018sharp,
  title={Sharp convergence rates for Langevin dynamics in the nonconvex setting},
  author={Cheng, Xiang and Chatterji, Niladri S and Abbasi-Yadkori, Yasin and Bartlett, Peter L and Jordan, Michael I},
  journal={arXiv preprint arXiv:1805.01648},
  year={2018}
}

@article{kleppe2022,
author = {Tore Selland Kleppe},
title = {Connecting the Dots: Numerical Randomized Hamiltonian Monte Carlo with State-Dependent Event Rates},
journal = {Journal of Computational and Graphical Statistics},
volume = {31},
number = {4},
pages = {1238-1253},
year  = {2022},
publisher = {Taylor & Francis},
}

@article{BuDoPa2007,
	Author = {Bussi, G. and Donadio, D. and Parrinello, M.},
	Journal = {J Chem Phys},
	Pages = {014101},
	Title = {Canonical Sampling through Velocity Rescaling},
	Volume = {126},
	Year = {2007}}

@article{ELi2008,
	Author = {E, W. and Li, D.},
	Journal = {CPAM},
	Pages = {96--136},
	Title = {The {A}ndersen Thermostat in Molecular Dynamics},
	Volume = {61},
	Year = {2008}}

@article{Bo2014,
	Author = {Bou-Rabee, N.},
	Journal = {Entropy},
	Note = {},
	Pages = {138-162},
	Title = {Time Integrators for Molecular Dynamics},
	Volume = {16},
	Year = {2014}}

@article{BoHa2013,
	Author = {Bou-Rabee, N. and Hairer, M.},
	Journal = {IMA J of Numer Anal},
	Pages = {80--110},
	Title = {Non-asymptotic mixing of the {MALA} algorithm},
	Volume = {33},
	Year = {2013}}

@article{BoVa2010,
	Author = {B{ou-Rabee}, N. and V{anden-Eijnden}, E.},
	Journal = {Comm Pure and Appl Math},
	Pages = {655-696},
	Title = {Pathwise accuracy and ergodicity of {M}etropolized Integrators for {SDE}s},
	Volume = {63},
	Year = {2010}}

@article{DuMoSa2020,
author = {Alain Durmus and {\'E}ric Moulines and Eero Saksman},
title = {{Irreducibility and geometric ergodicity of Hamiltonian Monte Carlo}},
volume = {48},
journal = {The Annals of Statistics},
number = {6},
publisher = {Institute of Mathematical Statistics},
pages = {3545 -- 3564},
keywords = {geometric ergodicity, Hamiltonian Monte Carlo, irreducibility, Markov chain Monte Carlo},
year = {2020}
}

@article{Ta2002,
	Author = {Talay, D.},
	Date-Modified = {2017-04-16 13:45:47 +0000},
	Journal = {Markov Processes and Related Fields},
	Pages = {1--36},
	Title = {Stochastic {H}amiltonian Systems: Exponential Convergence to the Invariant Measure, and Discretization by the Implicit {E}uler Scheme},
	Volume = {8},
	Year = {2002}}

@article{ScLeStCaCa2006,
	Author = {Scemama, A. and Leli\`{e}vre, T. and Stoltz, G. and Canc\'{e}s, E. and Caffarel, M.},
	Journal = {J Chem Phys},
	Pages = {114105},
	Title = {An Efficient Sampling algorithm for Variational {M}onte {C}arlo},
	Volume = {125},
	Year = {2006}}

@article{rosenthal1995minorization,
  title={Minorization conditions and convergence rates for Markov chain Monte Carlo},
  author={Rosenthal, Jeffrey S},
  journal={Journal of the American Statistical Association},
  volume={90},
  number={430},
  pages={558--566},
  year={1995},
  publisher={Taylor \& Francis}
}

@article{YaRo2023,
author = {Jun Yang and Jeffrey S. Rosenthal},
title = {{Complexity results for MCMC derived from quantitative bounds}},
volume = {33},
journal = {The Annals of Applied Probability},
number = {2},
publisher = {Institute of Mathematical Statistics},
pages = {1459 -- 1500},
year = {2023}
}

@book{meyntweedie1993,
  title={Markov chains and stochastic stability},
  author={Meyn, Sean P and Tweedie, Richard L},
  year={1993},
  publisher={pringer-Verlag}
}

@article{RoRo2004,
author = {Gareth O. Roberts and Jeffrey S. Rosenthal},
title = {{General state space Markov chains and MCMC algorithms}},
volume = {1},
journal = {Probability Surveys},
number = {none},
publisher = {Institute of Mathematical Statistics and Bernoulli Society},
pages = {20 -- 71},
year = {2004}
}

@article{Ha1970,
	Author = {Hastings, W. K.},
	Date-Modified = {2017-02-27 18:25:02 +0000},
	Journal = {Biometrika},
	Pages = {97--109},
	Title = {{M}onte-{C}arlo Methods Using {M}arkov Chains and Their Applications},
	Volume = {57},
	Year = {1970}}

@article{MeRoRoTeTe1953,
	Author = {Metropolis, N. and Rosenbluth, A. W. and Rosenbluth, M. N. and Teller, A. H. and Teller, E.},
	Date-Modified = {2017-02-27 18:25:12 +0000},
	Journal = {J Chem Phys},
	Pages = {1087--1092},
	Title = {Equations of State Calculations by Fast Computing Machines},
	Volume = {21},
	Year = {1953}}

@article{RoTw1996A,
	Author = {Roberts, G. O. and Tweedie, R. L.},
	Journal = {Biometrika},
	Pages = {95--110},
	Title = {Geometric Convergence and Central Limit Theorems for Multidimensional {H}astings and {M}etropolis Algorithms},
	Volume = {1},
	Year = {1996}}

@article{RoTw1996B,
	Author = {Roberts, G. O. and Tweedie, R. L.},
	Journal = {Bernoulli},
	Pages = {341--363},
	Title = {Exponential Convergence of {L}angevin Distributions and their Discrete Approximations},
	Volume = {2},
	Year = {1996}}

@inproceedings{hairer2011yet,
  title={Yet another look at Harris’ ergodic theorem for Markov chains},
  author={Hairer, Martin and Mattingly, Jonathan C},
  booktitle={Seminar on Stochastic Analysis, Random Fields and Applications VI: Centro Stefano Franscini, Ascona, May 2008},
  pages={109--117},
  year={2011},
  organization={Springer}
}

@article{MenTw1996,
	Author = {Mengersen, K. L. and Tweedie, R. L.},
	Journal = {Ann Stat},
	Pages = {101--121},
	Title = {Rates of Convergence of the {H}astings and {M}etropolis Algorithms},
	Volume = {24},
	Year = {1996}}

@book{levin2009markov,
	Author = {Levin, David Asher and Peres, Yuval and Wilmer, Elizabeth Lee},
	Date-Added = {2017-08-30 14:08:48 +0000},
	Date-Modified = {2017-08-30 14:08:48 +0000},
	Publisher = {American Mathematical Soc.},
	Title = {Markov chains and mixing times},
	Year = {2009}}

@article{MaStHi2002,
	Author = {Mattingly, J. C. and Stuart, A. M. and Higham, D. J.},
	Date-Modified = {2017-04-16 13:45:17 +0000},
	Journal = {Stoch. Proc. Appl.},
	Number = {2},
	Pages = {185--232},
	Title = {Ergodicity for {SDE}s and approximations: locally {L}ipschitz vector fields and degenerate noise},
	Volume = {101},
	Year = {2002}}

@article{hairer2010convergence,
  title={Convergence of Markov processes},
  author={Hairer, Martin},
  journal={Lecture notes},
  year={2010}
}

@article{hairer2014spectral,
  title={{Spectral gaps for a Metropolis--Hastings algorithm in infinite dimensions}},
  author={Hairer, M. and Stuart, A. M. and Vollmer, S. J.},
 fjournal = "The Annals of Applied Probability",
 journal = "Ann. Appl. Probab.",
  volume={24},
  number={6},
  pages={2455--2490},
  year={2014},
  publisher={Institute of Mathematical Statistics}
}

@book{HaLuWa2010,
	Author = {Hairer, E. and Lubich, C. and Wanner, G.},
	Publisher = {Springer},
	Title = {Geometric Numerical Integration},
	Year = {2010}}

@article{DuFoMo2016,
author = {Alain Durmus and Gersende Fort and {\'E}ric Moulines},
title = {{Subgeometric rates of convergence in Wasserstein distance for Markov chains}},
volume = {52},
	Journal = {Ann. Inst. H. Poincar{\'e} Probab. Statist},
number = {4},
pages = {1799 -- 1822},
year = {2016}
}

@article{durmus2022geometric,
  title={On the geometric convergence for {MALA} under verifiable conditions},
  author={Durmus, Alain and Moulines, {\'E}ric},
  journal={arXiv preprint arXiv:2201.01951},
  year={2022}
}

@article{hairer2011asymptotic,
  title={Asymptotic coupling and a general form of Harris’ theorem with applications to stochastic delay equations},
  author={Hairer, Martin and Mattingly, Jonathan C and Scheutzow, Michael},
  journal={Probability theory and related fields},
  volume={149},
  pages={223--259},
  year={2011},
  publisher={Springer}
}

@article{EbMa2019,
author = {Andreas Eberle and Mateusz B. Majka},
title = {{Quantitative contraction rates for Markov chains on general state spaces}},
volume = {24},
journal = {Electronic Journal of Probability},
number = {none},
publisher = {Institute of Mathematical Statistics and Bernoulli Society},
pages = {1 -- 36},
keywords = {Couplings, Euler schemes, Markov chains, Metropolis algorithm, quantitative bounds, Wasserstein distances},
year = {2019}
}

@article{alt23,
      title={Faster high-accuracy log-concave sampling via algorithmic warm starts}, 
      author={Altschuler, J. M. and Chewi, S.},
      journal={arXiv preprint arXiv:2302.10249},
      year={2023}}

@article{chen23,
      title={When does {M}etropolized {H}amiltonian {M}onte {C}arlo provably outperform {M}etropolis-adjusted {L}angevin algorithm?}, 
      author={Chen, Z. and Gatmiry, K.},
      journal={arXiv preprint arXiv:2304.04724},
      year={2023}}

@article{chen22,
    AUTHOR = {Wu, K. and Schmidler, S. and Chen, Y.},
     TITLE = {Minimax mixing time of the {M}etropolis-adjusted {L}angevin
              algorithm for log-concave sampling},
   JOURNAL = {J. Mach. Learn. Res.},
    VOLUME = {23},
      YEAR = {2022}}

@InProceedings{LST20,
  title = {Logsmooth Gradient Concentration and Tighter Runtimes for {M}etropolized {H}amiltonian {M}onte {C}arlo},
  author = {Lee, Y. T. and Shen, R. and Tian, K.},
  booktitle = {Proceedings of Thirty Third Conference on Learning Theory},
  pages = {2565--2597},
  year = {2020},
  volume = {125}}

@article{Eb2014,
  author = {Eberle, A.},
  title = {{Error bounds for {M}etropolis–{H}astings algorithms applied to perturbations of {G}aussian measures in high dimensions}},
  volume = {24},
  journal = {The Annals of Applied Probability},
  number = {1},
  pages = {337 -- 377},
  year = {2014}
}

@inproceedings{LST21,
 author = {Lee, Y. T. and Shen, R. and Tian, K.},
 booktitle = {Advances in Neural Information Processing Systems},
 pages = {18812--18824},
 publisher = {Curran Associates, Inc.},
 title = {Lower Bounds on Metropolized Sampling Methods for Well-Conditioned Distributions},
 volume = {34},
 year = {2021}
}

\end{document}